\newcommand{\FF}{{\mathbb{F}}}
\newcommand{\fA}{{\mathfrak{A}}}
\newcommand{\fS}{{\mathfrak{S}}}
\newcommand{\ad}{{\operatorname{ad}}}
\newcommand{\rank}{{\operatorname{rank}}}
\newcommand{\tr}{{\operatorname{tr}}}
\newcommand{\Aut}{{\operatorname{Aut}}}
\newcommand{\GL}{{\operatorname{GL}}}
\newcommand{\PGL}{{\operatorname{PGL}}}
\newcommand{\PSL}{{\operatorname{L}}}
\newcommand{\GU}{{\operatorname{GU}}}
\newcommand{\SU}{{\operatorname{SU}}}
\newcommand{\PSU}{{\operatorname{U}}}
\newcommand{\SL}{{\operatorname{SL}}}
\newcommand{\Sp}{{\operatorname{Sp}}}
\newcommand{\PSp}{{\operatorname{S}}}
\newcommand{\OO}{{\operatorname{O}}}
\newcommand{\SO}{{\operatorname{SO}}}
\newcommand{\GO}{{\operatorname{GO}}}
\newcommand{\CO}{{\operatorname{CO}}}
\newcommand{\Chevie}{{\sf Chevie}}
\newcommand{\GAP}{{\sf GAP}}
\newcommand{\tw}[1]{{}^#1\!}
\def\pmod#1{~({\rm mod}~#1)}
\let\la=\lambda
\let\sg=\sigma
\newtheorem{thm}{Theorem}[section]
\newtheorem{lem}[thm]{Lemma}
\newtheorem{cor}[thm]{Corollary}
\newtheorem{prop}[thm]{Proposition}
\newtheorem*{thmA}{Theorem}
\theoremstyle{definition}
\newtheorem{exmp}[thm]{Example}
\newtheorem{conj}[thm]{Conjecture}
\theoremstyle{remark}
\newtheorem{rem}[thm]{Remark}
\begin{document}

\title[Variations on Baer--Suzuki]{Variations on the Baer--Suzuki Theorem}

\date{\today}

\author{Robert Guralnick}
\address{Department of Mathematics, University of
  Southern California, Los Angeles, CA 90089-2532, USA}
\makeatletter\email{guralnic@usc.edu}\makeatother
\author{Gunter Malle}
\address{FB Mathematik, TU Kaiserslautern, Postfach 3049,
  67653 Kaisers\-lautern, Germany}
  \makeatletter\email{malle@mathematik.uni-kl.de}\makeatother

\thanks{The first author was partially supported by the NSF
  grants DMS-1001962, DMS-1302886  and the Simons Foundation Fellowship 224965.
  The second author gratefully acknowledges financial support by ERC
  Advanced Grant 291512.}

\keywords{Baer--Suzuki theorem, conjugacy classes, commutators}

\subjclass[2010]{Primary 20D20; Secondary  20G07,20F12}

\dedicatory{Dedicated to Bernd Fischer on the occasion of his 75th birthday}

\begin{abstract}
The Baer--Suzuki theorem says that if $p$ is a prime, $x$ is a $p$-element
in a finite group $G$ and $\langle x, x^g \rangle$ is a $p$-group for all
$g \in G$, then the normal closure of $x$ in $G$ is a $p$-group.
We consider the case where $x^g$ is replaced by $y^g$ for some other
$p$-element~$y$. While the analog of Baer--Suzuki is not true, we show that
some variation is. We also answer a closely related question of Pavel
Shumyatsky on commutators of conjugacy classes of $p$-elements.
\end{abstract}

\maketitle

\pagestyle{myheadings}

\section{Introduction}  \label{sec:intro}

The Baer--Suzuki theorem asserts:

\begin{thmA}  
 Let $G$ be a finite group and $x \in G$. If $\langle x, x^g \rangle$
 is nilpotent for all $g \in G$, then $\langle x^G \rangle$ is a nilpotent
 normal subgroup of $G$.
\end{thmA}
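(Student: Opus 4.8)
The plan is to reduce first to a single prime and then argue by minimal counterexample. For the reduction, write $x=\prod_p x_p$ as the product of its commuting $p$-parts; each $x_p$ is a power of $x$, so $x_p^g=(x^g)_p$ for every $g\in G$. Inside the nilpotent group $\langle x,x^g\rangle$ the Sylow $p$-subgroup is normal and contains both $x_p$ and $x_p^g$, whence $\langle x_p,x_p^g\rangle$ is a $p$-group for all $g$. It therefore suffices to prove the \emph{prime version}: if $x$ is a $p$-element and $\langle x,x^g\rangle$ is a $p$-group for all $g\in G$, then $x\in O_p(G)$. Granting this, each $x_p$ lies in $O_p(G)$, the subgroups $O_p(G)$ for distinct primes centralize one another, and their product is a nilpotent normal subgroup containing $x$; being normal it contains $\langle x^G\rangle$, which is then nilpotent.

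To prove the prime version I would suppose it fails and take a counterexample $G$ of minimal order. Passing to $G/O_p(G)$ keeps the hypotheses (the image of $x$ is again a $p$-element and the pairwise condition passes to quotients), so I may assume $O_p(G)=1$; and replacing $G$ by $\langle x^G\rangle$ I may assume $G=\langle x^G\rangle$ with $x\neq 1$, since otherwise minimality applied to $\langle x^G\rangle$ already forces it to be a normal $p$-subgroup, contradicting $O_p(G)=1$. The goal is now to exhibit a nontrivial normal $p$-subgroup.

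Next I would introduce a maximal conjugate-generated $p$-subgroup: among all subgroups $\langle x^{g_1},\dots,x^{g_k}\rangle$ that happen to be $p$-groups (the family is nonempty, e.g.\ $\langle x\rangle$), choose $P$ of maximal order, so $P\neq 1$. The one clean point here is that \emph{every conjugate $x^g$ lying in $N_G(P)$ already lies in $P$}. Indeed, if $x^g\in N_G(P)$ then $P\trianglelefteq P\langle x^g\rangle$ with quotient generated by the image of the $p$-element $x^g$, so $P\langle x^g\rangle=\langle x^{g_1},\dots,x^{g_k},x^g\rangle$ is again a $p$-group generated by conjugates of $x$; maximality of $P$ then forces $x^g\in P$.

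The hard part will be to upgrade this local statement to the global assertion $P\trianglelefteq G$, which contradicts $O_p(G)=1$ and completes the proof; this is the genuine content of Baer--Suzuki and the main obstacle, since the merely pairwise hypothesis must be played off against the full fusion of the conjugates of $x$. I would attack it by a Sylow-intersection argument: choose two distinct conjugates $P$ and $P^g$ whose intersection $D$ has maximal order, pass to $N_G(D)$, and study how conjugates of $x$ sit inside it; the maximality of $D$ together with the displayed containment property should constrain these conjugates enough to force $P=P^g$ for every $g$, i.e.\ $P\trianglelefteq G$. Everything preceding this step is formal bookkeeping, and it is precisely the location of the right maximal intersection and the extraction of the contradiction that carry the weight.
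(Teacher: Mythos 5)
Your first paragraph is correct and matches the only thing the paper itself says about this theorem: the paper gives no proof at all, but remarks that it suffices to treat $p$-elements prime by prime and cites the classical proofs (Alperin--Lyons \cite{AL}, Huppert--Blackburn \cite{HB}, Suzuki \cite{Suz}). Your reduction is sound: $x_p^g=(x^g)_p$, both $p$-parts lie in the unique Sylow $p$-subgroup of the nilpotent group $\langle x,x^g\rangle$, and once the prime version is known, the product of the subgroups $O_p(G)$ over all primes is a nilpotent normal subgroup containing $x^G$. The normalizations in your second paragraph ($O_p(G)=1$, $G=\langle x^G\rangle$) and the absorption property $x^G\cap N_G(P)\subseteq P$ for a maximal conjugate-generated $p$-subgroup $P$ are likewise correct and standard.

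However, the proposal stops exactly where the theorem begins: upgrading $x^G\cap N_G(P)\subseteq P$ to $P\trianglelefteq G$ \emph{is} the Baer--Suzuki theorem, and you only gesture at a Sylow-intersection argument without carrying it out. Worse, that particular plan meets a concrete obstruction. If $P\ne P^g$ and $D=P\cap P^g$ is a maximal such intersection, the usual engine $N_P(D)>D$ produces $p$-elements in $N_G(D)\setminus D$, but your absorption/maximality properties apply only to \emph{conjugates of $x$}; nothing guarantees that $N_P(D)\setminus D$ (or $N_{P^g}(D)\setminus D$) contains any element of $x^G$, since $P$ being generated by conjugates of $x$ says nothing about which of its subgroups meet $x^G$. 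This is why the elementary proofs all introduce an ingredient absent from your outline: induction applied to \emph{proper subgroups} of the minimal counterexample. That induction shows $x^G\cap H\subseteq O_p(H)$ for every proper $H<G$, whence every subset $S\subseteq x^G$ either generates a $p$-group or generates all of $G$; Alperin--Lyons then derive a contradiction from a subset of $x^G$ minimal subject to generating $G$ (it has at least three elements, and deleting single elements yields $p$-groups that can be played off against each other). Your argument only ever invokes maximality of $P$, which by itself does not supply enough leverage, so what you have is a correct reduction and setup, not a proof.
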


There are many relatively elementary proofs of this (see \cite{AL},
\cite[p.~298]{HB} or \cite[p.~196]{Suz}). Clearly, it suffices to prove
the result for $x$ a $p$-element for each prime $p$ (or indeed
for $x$ of prime order $p$).  We were recently informed by Bernd Fischer
that Reinhold Baer had asked what one can say if instead for given $x, y\in G$
we have the hypothesis that $\langle x, y^g \rangle$ is a $p$-group for all
$g\in G$.  Examples in \cite{GMT} show that there is not too much to say
in general. The most one could expect is that $[x^G,y^G] \le O_p(G)$ and
this can fail.

Using the classification of finite simple groups, the following generalization
of Baer--Suzuki for primes at least $5$ was proved in \cite[Thm.~1.9]{GMT}
though:

\begin{thmA}   
 Let $G$ be a finite group and $p \ge 5$ a prime. If $C$ and $D$ are
 normal subsets of $G$ with $\langle C \rangle = \langle D \rangle$
 and $\langle c, d \rangle$ is a $p$-group for all $(c,d) \in C \times D$,
 then $\langle C \rangle$ is a normal $p$-subgroup of $G$.
\end{thmA}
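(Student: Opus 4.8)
The plan is to argue by induction on $|G|$, reducing the problem to a statement about almost simple groups that is then settled using the classification. First I would make the standard reductions. Since $C$ and $D$ are normal subsets with $\langle C\rangle=\langle D\rangle=:N$, the subgroup $N$ is normal in $G$ and $C,D\subseteq N$ are still normal subsets of $N$ satisfying the same hypothesis; so I may replace $G$ by $N$ and assume $G=\langle C\rangle=\langle D\rangle$. For any $c\in C$, $d\in D$ the group $\langle c,d\rangle$ is a $p$-group, so (discarding the trivial case) every element of $C\cup D$ is a $p$-element and $G$ is generated by $p$-elements. Taking $G$ to be a counterexample of minimal order, I would first show $O_p(G)=1$: passing to $\bar G=G/O_p(G)$ with images $\bar C,\bar D$ preserves all hypotheses, since homomorphic images of $p$-groups are $p$-groups and $\langle\bar C\rangle=\langle\bar D\rangle=\bar G$. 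By minimality $\bar G$ would then be a $p$-group, forcing $G$ to be one. Thus $O_p(G)=1$, and the goal becomes to show that no nontrivial such $G$ exists.

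It is worth isolating why the statement has content only for two genuinely different subsets. If $c\in C\cap D$, then, as $D$ is a normal subset, $\langle c,c^g\rangle$ is a $p$-group for all $g\in G$, so the classical Baer--Suzuki theorem gives that $\langle c^G\rangle$ is a normal $p$-subgroup, hence trivial since $O_p(G)=1$. In particular the single-class case $C=D$ is immediate from Baer--Suzuki, and all the difficulty lies in the mixed condition that $\langle c,d\rangle$ be a $p$-group for $c\in C$ and $d\in D$ drawn from \emph{different} classes, linked together only through $\langle C\rangle=\langle D\rangle$.

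Next I would carry out the structural reduction using the generalized Fitting subgroup. Since $O_p(G)=1$, the Fitting subgroup $F(G)=O_{p'}(G)$ is a $p'$-group, and $C_G(F^*(G))\le F^*(G)$. The solvable configurations, where $F^*(G)=F(G)$ is a $p'$-group on which the $p$-elements of $C$ and $D$ must act nontrivially, I would exclude by coprime action arguments, which show that a nontrivial action is incompatible with every pair $\langle c,d\rangle$ remaining a $p$-group; this pushes all the weight onto the layer $E(G)$, so $G$ must possess a component. Examining the conjugation action of $C$ and $D$ on the set of components and on a fixed component $S$ then localizes the problem to an almost simple group $S\trianglelefteq H\le\Aut(S)$ with $S$ nonabelian simple, equipped with normal subsets of nontrivial $p$-elements whose pairwise joins are $p$-groups.

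The heart of the proof, and the main obstacle, is the resulting statement about almost simple groups: for $p\ge5$ and $S$ nonabelian simple, given any two nontrivial $p$-elements $x,y\in\Aut(S)$ there exists $g\in S$ with $\langle x,y^g\rangle$ not a $p$-group. This cannot be attacked uniformly and requires the classification. For alternating and sporadic groups one argues with explicit class and subgroup data (using \GAP\ and \Chevie). For groups of Lie type the crucial and hardest case is the defining characteristic, where $x,y$ are unipotent: one must show that two nontrivial unipotent elements, suitably conjugated, never all lie in a common Sylow $p$-subgroup, typically by exhibiting a non-unipotent element of $\langle x,y^g\rangle$ through root-subgroup and rank-one ($\SL_2$) computations and rank considerations, while the cross-characteristic (semisimple) case is handled by generation and centralizer estimates. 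The joint generation hypothesis $\langle C\rangle=\langle D\rangle$ is exactly what excludes the residual exceptional almost simple configurations. The restriction $p\ge5$ enters here: for $p=2,3$ the relevant non-unipotency and generation statements genuinely fail, as the examples in \cite{GMT} show, so the argument is tuned to exclude the small primes. Combining the simple-group statement with the reductions above contradicts the existence of a nontrivial minimal counterexample, proving that $\langle C\rangle$ is a normal $p$-subgroup of $G$.
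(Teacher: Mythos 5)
Your overall skeleton (minimal counterexample, reduction to $O_p(G)=1$, analysis of $F^*(G)$, and a classification-based statement about almost simple groups) is the standard one; note, though, that this paper does not prove the quoted theorem at all --- it cites it from \cite[Thm.~1.9]{GMT} --- and what it proves instead (Theorem~\ref{thm:main}, in Section~\ref{sec:proof}) replaces $\langle C\rangle=\langle D\rangle$ by the hypothesis that no $\langle c,d\rangle$ has a $Z_p\wr Z_p$ section. Your almost-simple ingredient is exactly \cite[Thm.~8.4]{GMT} (restated for $p>3$ in Corollary~\ref{cor:p=3}). The genuine gap in your proposal is the solvable case, which you claim to dispose of ``by coprime action arguments.'' That claim is false as stated. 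Take $r\equiv1\pmod p$, $V=\FF_r^p$, and $Q=Z_p\wr Z_p$ acting on $V$ with the base group $E=Z_p^p$ acting by coordinate scalars and the top cycle $x$ permuting coordinates; put $G=V\rtimes Q$, $C=x^G$, $D=e_1^G$. Then $O_p(G)=1$, both $C$ and $D$ are normal sets of elements of order $p$ acting nontrivially and coprimely on $V=F^*(G)$, and yet $\langle c,d\rangle$ is a $p$-group for every $(c,d)\in C\times D$: every pair is conjugate to one of the form $(c',d'^v)$ with $c',d'\in Q$, and such a pair lies in a conjugate of $Q$ precisely when $v\in C_V(d')C_V(c')$, which always holds since $C_V(c')$ is spanned by a vector with all coordinates nonzero while $C_V(d')$ is a coordinate hyperplane. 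This works for every prime, including $p\ge5$, so no coprime-action argument can eliminate this configuration. It does not contradict the theorem, because here $\langle C\rangle\ne\langle D\rangle$ (the normal closure of $e_1$ contains $E$ and $V$, while that of $x$ meets $E$ only in $[E,x]$); it contradicts your intermediate step, which never invokes $\langle C\rangle=\langle D\rangle$. Indeed you have the roles of the two hypotheses backwards: you reserve $\langle C\rangle=\langle D\rangle$ for ``residual almost simple configurations,'' but for $p\ge5$ the almost simple statement holds for arbitrary pairs of nontrivial $p$-elements with no joint-generation assumption; the place where $\langle C\rangle=\langle D\rangle$ is indispensable is exactly the abelian/solvable case above. (Your description of why $p\ge5$ is needed is correct: for $p=3$, long and short root elements of $G_2(3^a)$ jointly generate the simple group and still defeat the conclusion.)

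There is a second, structural, problem: your induction repeatedly passes to subgroups (the localization to a component, and implicitly to subgroups such as $N\langle c,d\rangle$ or normalizers of Sylow subgroups when $c,d$ permute components), but the hypothesis $\langle C\rangle=\langle D\rangle$ is not inherited by $C\cap H$, $D\cap H$ for a subgroup $H$, so ``by minimality'' is not available in those passages. This is precisely why the paper's Theorem~\ref{thm:main} is formulated with the $Z_p\wr Z_p$-free hypothesis, which \emph{is} inherited by subgroups and quotients, and why its proof kills the abelian case not by coprime action but by the representation-theoretic Corollary~\ref{cor:pgroup} (which manufactures a forbidden $Z_p\wr Z_p$ section from the decomposition $N=[c,N]\oplus[d,N]$). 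To repair your argument you would need to (i) show concretely how $\langle C\rangle=\langle D\rangle$ rules out the module configuration exhibited above, and (ii) reorganize the reduction so that whatever hypothesis you induct on actually survives the subgroup passages --- the two points that constitute most of the real work in \cite{GMT}.
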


The conclusion fails for $p=2,3$ and fails without the assumption that
$\langle C \rangle = \langle D \rangle$.
In this note, we show that if we strengthen the assumption on the structure
of $\langle c, d \rangle$, then we can prove a version of Baer--Suzuki:

\begin{thm}   \label{thm:main}
 Let $G$ be a finite group and $p$ a prime. Let $C, D$ be normal subsets of
 $G$ such that if $(c,d) \in C \times D$, then $\langle c, d \rangle$ is a
 $p$-group with no section isomorphic to $Z_p \wr Z_p$. Then $[C,D] \le O_p(G)$.
\end{thm}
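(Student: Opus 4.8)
The plan is to argue by a minimal counterexample and reduce to an almost simple group, where the classification of finite simple groups can be brought to bear. First I would pass to $\bar G = G/O_p(G)$: since every section of a quotient of $\langle c,d\rangle$ is already a section of $\langle c,d\rangle$, the images $\bar C,\bar D$ satisfy the same hypothesis and $O_p(\bar G)=1$, while $[C,D]$ maps onto $[\bar C,\bar D]$. Hence it suffices to show $[C,D]=1$ under the standing assumption $O_p(G)=1$. Since a normal subset is a union of conjugacy classes and $[C,D]$ is generated by the commutators $[c,d]$, it further suffices to treat the case where $C=c^G$ and $D=d^G$ are single classes of $p$-elements; so the goal becomes to prove that $[c,d^g]=1$ for all $g\in G$, granted that each $\langle c,d^g\rangle$ is a $p$-group with no section isomorphic to $Z_p\wr Z_p$.

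Next I would analyze the action of $c$ and $d$ on the generalized Fitting subgroup $F^*(G)=F(G)E(G)$. Because $O_p(G)=1$, the Fitting subgroup is the $p'$-group $F(G)=O_{p'}(G)$, and $C_G(F^*(G))\le F^*(G)$. The components of $E(G)$ are permuted by $G$, and the requirement that $\langle c,d^g\rangle$ be a $p$-group for every $g$ tightly restricts how the $p$-elements $c,d$ can permute and act on these components. I would use this, together with the usual generalized-Fitting machinery, to strip off the $p'$-radical and reduce to the case that $G$ is almost simple with socle a nonabelian simple group $L$ and that $c,d$ do not both centralize $L$. It is worth noting that the diagonal case $C=D$ needs none of this: if $C=D=x^G$, then the ordinary Baer--Suzuki theorem already forces the normal closure $\langle x^G\rangle$ to be a normal $p$-subgroup, hence trivial here, so $[C,C]=1$. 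The genuinely new content therefore lies in two distinct classes inside an almost simple group.

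The decisive and hardest step is this almost simple analysis, which is where the classification enters. For each possibility for $L$ and each pair of $p$-element classes I would look for a conjugate $d^g$ such that $\langle c,d^g\rangle$ is either not a $p$-group at all or else contains a section isomorphic to $Z_p\wr Z_p$, the latter being exactly the configuration the hypothesis forbids. This is natural because $Z_p\wr Z_p$ is a Sylow $p$-subgroup of $S_{p^2}$ and is generated by two elements of order $p$, one having $p$ independent conjugates under the cyclic group generated by the other; such a section is precisely what two noncommuting $p$-elements produce once they generate a sufficiently large $p$-subgroup, for example from configurations of unipotent root subgroups or from suitable permutation actions (and for searching out this obstruction one may replace $c,d$ by their order-$p$ powers, since sections pass to subgroups). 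I expect the main obstacle to be the groups of Lie type in their defining characteristic, where the forbidden wreath section must be exhibited explicitly from root subgroup data, together with the many small-rank and small-characteristic exceptions --- notably $p=2,3$, where $Z_2\wr Z_2\cong D_8$ has order only $8$ and so the exclusion is very strong --- which must be settled directly, in part by machine computation. Once it is shown in every case that noncommuting classes force either a non-$p$-group or a $Z_p\wr Z_p$ section, we conclude $[c,d^g]=1$ for all $g$, contradicting $[c,d]\ne 1$ and finishing the proof.
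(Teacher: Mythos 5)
Your front end (minimal counterexample, reduction to two conjugacy classes of elements of order $p$) and your endgame (the almost simple analysis, detecting either a non-$p$-group or a $Z_p\wr Z_p$ section among conjugate pairs) do match the paper, which handles the latter via \cite[Thm.~8.4]{GMT} for $p\ge 5$ and via the classification behind Corollary~\ref{cor:p=3} for $p=3$. The genuine gap is in the middle step, where you propose to ``strip off the $p'$-radical'' by the usual generalized-Fitting machinery and reduce to an almost simple group. That reduction does not exist. Quotienting by $O_{p'}(G)$ is not legitimate: the hypothesis does pass to $\bar G=G/O_{p'}(G)$, but the conclusion you would get back is only $[C,D]\le O_{p',p}(G)$, the preimage of $O_p(\bar G)$, which is strictly weaker than $[C,D]\le O_p(G)$. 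Consequently, in a minimal counterexample one really is left facing a group of the form $G=N\langle c,d\rangle$ with $N$ an elementary abelian $r$-group ($r\ne p$), $E(G)=1$, and $\langle c,d\rangle$ a $p$-group: there is no almost simple section anywhere, so CFSG-based case analysis never gets to speak. Moreover this configuration is exactly where the theorem's content is concentrated: for $V$ the $\FF_r$-module induced to $Q=Z_p\wr Z_p$ from a faithful character of the base subgroup $Z_p^p$, the group $V\rtimes Q$ with $c$ the top cycle and $d$ a base generator satisfies the ``all pairs generate $p$-groups'' hypothesis but violates the conclusion, so the no-wreath-section hypothesis must do real work precisely against $p$-elements acting on a $p'$-group --- something no soft argument about $F^*(G)$ can accomplish.

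The paper fills this hole with its one genuinely new tool, the representation theory of Section~\ref{sec:repn}. In the configuration above, $\langle c,d\rangle$ is a Sylow $p$-subgroup of $N\langle c,d\rangle$, so the hypothesis that $\langle c,d^x\rangle$ is a $p$-group for every $x\in N$ forces all these subgroups into conjugates of $\langle c,d\rangle$, which yields $N=C_N(c)C_N(d)$ and hence, by coprime action, $N=[c,N]\oplus[d,N]$. Then Lemma~\ref{lem:dim} and Corollary~\ref{cor:pgroup} (a $p$-group acting in coprime characteristic, generated by elements $x_i$ of order $p$ with $V=\bigoplus_i[x_i,V]$ and nonabelian image, must have a $Z_p\wr Z_p$ section) produce the forbidden section, a contradiction. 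This, not the almost simple classification, is the pivot of the proof, and nothing in your outline substitutes for it. A secondary omission of the same kind: when the socle is a product of $t>1$ simple components, ``tight restrictions on the permutation action'' is not an argument; the paper needs the self-normalizing Sylow results \cite[Thm.~X.8.13]{HB} and \cite[Thm.~1.1]{GMN}, together with the wreath obstruction again, to kill the case where $c$ and $d$ both permute the components nontrivially.
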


We also classify in Theorems~\ref{thm:p=3} and~\ref{thm:p=2} all pairs of
conjugacy classes $C,D$ of elements of order $p$ in finite almost simple
groups such that $\langle c,d \rangle$ is a $p$-group for all
$(c,d)\in C \times D$ (by \cite[Thm.~8.4]{GMT} this can only happen for
$p\le 3$). We call such a pair of classes a \emph{Baer--Fischer pair}
in view of Baer's question and of the fact that such pairs for $p=2$ were
found by Fischer in the automorphism group of his smallest group $Fi_{22}$ and
in the involution centralizer $2.\tw2E_6(2).2$ of the baby monster $B$.

Remark that for the case $p=2$ our result is somewhat complementary to various
earlier investigations by Fischer, Aschbacher, Timmesfeld, and others on
groups generated by 3-transpositions, or by odd involutions, which considered
involution classes for which the products mostly have odd order, instead
of 2-power order as here (see for example the survey \cite{Ti}).

The second goal of this paper is to answer a question of Pavel Shumyatsky.

\begin{thm}   \label{thm:mainB}
 Let $p$ be prime. Let $G$ be a finite group with a normal subset $C$
 consisting of $p$-elements and closed under taking commutators. Then
 one of the following occurs:
 \begin{enumerate}
  \item[\rm(1)] $\langle C \rangle$ is a $p$-group; or
  \item[\rm(2)] $p=5$ and $\langle C \rangle O_5(G)/O_5(G)$ is a direct
   product of copies of $\fA_5$ and $C$ is not closed under squares.
 \end{enumerate}
\end{thm}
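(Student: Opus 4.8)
The plan is to argue by induction on $|G|$, reducing to a tightly constrained configuration and then invoking the classification of finite simple groups together with the Baer--Fischer classification of Theorems~\ref{thm:p=3} and~\ref{thm:p=2}. First I would replace $G$ by $\langle C\rangle$, noting that $C$ stays a normal commutator-closed subset of $p$-elements, and then pass to $\bar G=G/O_p(G)$: the image $\bar C$ is again a normal set of $p$-elements closed under commutators, squaring and the quotient map are compatible, and the conclusion for $\bar G$ pulls back to $G$ (if $\langle\bar C\rangle$ is a $p$-group so is $G$, while case~(2) for $\bar G$ gives case~(2) for $G$ since $O_p(\bar G)=1$ and failure of square-closure is inherited). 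Hence I may assume $G=\langle C\rangle$ and $O_p(G)=1$, and the goal becomes: either $G=1$ (case~(1)), or $p=5$ and $G$ is a direct product of copies of $\fA_5$ with $C$ not closed under squares.

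The decisive dichotomy is whether every pair from $C$ generates a $p$-group. If $\langle c,d\rangle$ is a $p$-group for all $c,d\in C$, then in particular $\langle c,c^g\rangle$ is a $p$-group for every $c\in C$ and every $g\in G$, because $c^g\in C$; the classical Baer--Suzuki theorem then forces $\langle c^G\rangle\le O_p(G)=1$, so $C=\{1\}$ and $G=1$. (It is worth remarking that this implication uses neither Theorem~\ref{thm:main} nor its $Z_p\wr Z_p$ hypothesis.) So I may fix $c,d\in C$ with $H=\langle c,d\rangle$ not a $p$-group; since $H/[H,H]$ is generated by the $p$-elements $\bar c,\bar d$ it is a $p$-group, so $[H,H]$ is not a $p$-group, yet $[H,H]$ is the normal closure of $[c,d]$ in $H$ and hence is generated by $H$-conjugates of $[c,d]$, all lying in $C$. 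Thus a genuine non-$p$ phenomenon is present, and $G$ must be shown to be of the exceptional shape.

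For the structural analysis of $G=\langle C\rangle$ with $O_p(G)=1$ I would study $F^*(G)$. Applying the inductive hypothesis to the proper quotients of $G$ arising from the action on $F(G)$, on the set of components, and on each individual component (the centralizer of a component being a proper normal subgroup when there is more than one, or when $F(G)\ne1$), one reduces, by standard generalized-Fitting arguments, to the case that $G$ is almost simple with a single component $L=F^*(G)=E(G)$; here the coprime action of the $p$-elements of $C$ on $O_{p'}(G)$ together with commutator-closure is what eliminates $F(G)$, and the same inductive bookkeeping rules out $C$ permuting isomorphic components nontrivially, so that in the original group the surviving components assemble into the asserted direct product of copies of $\fA_5$. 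In the almost simple base case I would feed the normal commutator-closed set $C$ of nonidentity $p$-elements, which by assumption does not generate a $p$-group, into the class-by-class analysis behind Theorems~\ref{thm:p=3} and~\ref{thm:p=2}: since $(C,C)$ cannot be a Baer--Fischer pair, a CFSG case-check of which classes of $p$-elements have all commutators again $p$-elements without forcing a $p$-subgroup should leave only $L=\fA_5$ with $p=5$ and $C$ a single class of $5$-cycles. The square condition then follows because $2$ is a primitive root modulo $5$: closing $C$ under squares would force $C$ to meet both classes of $5$-cycles and hence contain all of them, but commutators of $5$-cycles include $3$-cycles, contradicting commutator-closure.

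The main obstacle is exactly this almost simple step: proving, via the classification, that among all finite almost simple groups only $\fA_5$ at $p=5$ carries a conjugation- and commutator-closed set of nonidentity $p$-elements failing to generate a $p$-group. This demands controlling, for each family of simple groups and each class of $p$-elements, precisely when the set of commutators $[c,d]$ again consists of $p$-elements---the information packaged in the Baer--Fischer classification---and then isolating the genuinely new $p=5$ behaviour (where $\langle C\rangle=\fA_5$ is not a $p$-group) from the $p\le3$ Baer--Fischer pairs, in which $\langle C\rangle$ \emph{is} a $p$-group. Subsidiary difficulties, which I expect to be routine by comparison, are the elimination of $O_{p'}(G)$ and of component-permuting elements, and the final reassembly of the components into the direct product of copies of $\fA_5$ recorded in case~(2).
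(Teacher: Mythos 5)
Your opening reductions are sound and essentially match the paper: passing to $G=\langle C\rangle$ with $O_p(G)=1$, the Baer--Suzuki observation that not all pairs from $C$ can generate $p$-groups, and the coprime-action trick $[y,[y,N]]=[y,N]$ to eliminate $p'$-normal subgroups all appear (the reduction to a simple group is Lemmas~\ref{lem:simple} and~\ref{lem:simple5}). The fatal problem is your almost-simple step. You assert that the information needed there --- for each class of $p$-elements, precisely when all commutators $[c,d]$ are again $p$-elements --- is ``the information packaged in the Baer--Fischer classification'' of Theorems~\ref{thm:p=3} and~\ref{thm:p=2}. It is not. Those theorems classify pairs of classes $(C,D)$ such that $\langle c,d\rangle$ is a $p$-\emph{group} for all $(c,d)\in C\times D$, a strictly stronger condition than $[c,d]$ being a $p$-element. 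For a single class ($D=C$) that stronger condition never holds nontrivially: it is exactly the hypothesis of the classical Baer--Suzuki theorem and forces $C\subseteq O_p(G)$. So once your dichotomy tells you $(C,C)$ is not a Baer--Fischer pair, Theorems~\ref{thm:p=3} and~\ref{thm:p=2} have no further content for your problem: every commutator-closed class surviving your reduction fails the Baer--Fischer property, and the classification says nothing about which of those classes have $p$-element commutators. The paper's own motivating example shows the two conditions genuinely diverge: for a class $C$ of $5$-cycles in $\fA_5$ one has $[C,C]=C\cup\{1\}$, yet two $5$-cycles typically generate all of $\fA_5$, so $(C,C)$ is as far from a Baer--Fischer pair as possible.

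Consequently the CFSG case-check you defer to cannot be inherited from Sections~\ref{sec:p=3} and~\ref{sec:p=2}; it must be built with different tools, which is what Section~\ref{sec:commut} and the final section actually do. The engine is Lemma~\ref{lem:unique}: in a minimal counterexample, induction gives $C\cap M\subseteq O_p(M)$ for every maximal subgroup $M$ meeting $C$, so $\langle x\rangle$ is subnormal in each such $M$, and Wielandt's subnormality criterion forces each $x\in C$ to lie in a \emph{unique} maximal subgroup. From this the paper derives Lemma~\ref{lem:nci} and Lemma~\ref{lem:4} (every involution inverts a nontrivial element of $C$; every nontrivial element of $C$ is a product of four conjugates of any given element), and only then runs the simple-group analysis: Aschbacher's theorem on Sylow $2$-subgroups in unique maximal subgroups to rule out $p=2$, regular semisimplicity of elements of $C$ in Lie type groups of cross characteristic, trace and structure-constant computations in rank-one and sporadic groups, and explicit work for $\tw3D_4(q)$ and the exceptional groups. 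None of these inputs appears in, or follows from, the Baer--Fischer analysis, so your proposal is missing the core of the proof rather than merely leaving routine verifications to the reader.
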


Note that this result is closely related to the Baer--Suzuki theorem (which
can be viewed as
saying that if $C$ is a normal set of $p$-elements and $\langle x, y\rangle$
is a $p$-group for all $x, y \in C$, then $\langle C \rangle$ is a $p$-group).
Note that if $C$ is a conjugacy class of elements of order $5$ in $\fA_5$,
then $[C,C] = C \cup \{1 \}$, but $\langle C\rangle=\fA_5$ is simple.

\medskip

We conjecture that an even stronger property holds:

\begin{conj}
 Let $5 \ne p$ be a prime. Let $C$ be a conjugacy class of $p$-elements in
 the finite group $G$. If $[c,d]$ is a $p$-element for all $c,d \in G$,
 then $C \subset O_p(G)$.
\end{conj}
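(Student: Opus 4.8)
The plan is to argue by a minimal counterexample and to reduce, through the generalized Fitting subgroup, to a statement about almost simple groups that one then settles with the classification of finite simple groups, in the spirit of Theorems~\ref{thm:p=3}, \ref{thm:p=2} and~\ref{thm:mainB}. Suppose the conjecture fails and let $G$ be a counterexample of least order, with offending class $C$ and the commutators $[c,d]$ $(c,d\in C)$ all $p$-elements. Passing to $G/O_p(G)$ preserves all hypotheses, so I may assume $O_p(G)=1$; it then suffices to reach a contradiction by forcing a fixed nontrivial $c\in C$ to be trivial. Throughout, the only conjugates I will use are the $c^g$, for which $[c,c^g]$ is a $p$-element by hypothesis.

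First I would eliminate $O_{p'}(G)$. Writing $V=O_{p'}(G)=F(G)$ and, for $v\in V$, $w=[c,v]\in V$, the relation $c^v=cw$ gives $[c,c^v]=[c,w]\in V$; being simultaneously a $p'$-element and a $p$-element it is trivial, so $c$ centralizes every $[c,v]$. As these generate $[V,c]$ and $[V,c]=[V,c,c]$ under coprime action, we get $[V,c]=1$, i.e. $c$ centralizes $V$. Next I would rule out a nontrivial permutation of the components: if $c$ does not normalize some component, pick one, $L_1$, lying in a nontrivial $c$-orbit, a prime $r\mid|L_1|$ with $r\ne 2,p$, and $t\in L_1$ of order $r$, and set $d=c^t\in C$. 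The identity $[c,c^t]=[c,[c,t]]$ then yields an element one of whose component entries is a nontrivial power of a conjugate of $t$, hence a nontrivial $p'$-element; so $[c,d]$ is not a $p$-element, a contradiction. Thus $c$ normalizes each component $L$, and for $g\in L$ one checks $[c,c^g]\in L$.

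This localizes the whole problem to each component and reduces the theorem to the following assertion: if $S$ is almost simple with socle $L$ and $\gamma\in S$ is a nontrivial $p$-element (the automorphism $c$ induces on $L$) with $[\gamma,\gamma^g]$ a $p$-element for all $g\in L$, then $p=5$. Granting this, for $p\ne 5$ every such $\gamma$ is trivial, so $c$ centralizes all components and $O_{p'}(G)$, hence centralizes $F^*(G)=O_{p'}(G)E(G)$. Since $C_G(F^*(G))\le F^*(G)$, this puts $c$ in $F^*(G)$, and being a $p$-element it lies in $E(G)$ and in fact in $Z(E(G))$; but the $p$-part of $Z(E(G))$ is a normal $p$-subgroup of $G$, hence trivial, forcing $c=1$ and the desired contradiction.

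The main obstacle is exactly this almost simple statement, and it is here that $p\ne 5$ must enter essentially: in $\fA_5$ with $p=5$ two $5$-cycles can generate the whole group while every commutator of them stays a $5$-element (indeed $[C,C]=C\cup\{1\}$), so the conclusion genuinely fails there and any valid argument must exclude this configuration. For all other pairs $(L,p)$ with $p\ne 5$ one must prove that the $S$-class of a nontrivial $\gamma$ cannot have all commutators $[\gamma,\gamma^g]$ equal to $p$-elements. I expect this to parallel the classification of Baer--Fischer pairs, but with the weaker hypothesis ``$[\gamma,\gamma^g]$ is a $p$-element'' replacing ``$\langle\gamma,\gamma^g\rangle$ is a $p$-group'': the delicate point is that these two conditions should coincide for simple groups precisely away from $\fA_5$ at $p=5$. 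Concretely I would run through the CFSG list and, for groups of Lie type in their defining characteristic and the remaining small cases, combine character-theoretic lower bounds on products of the class $\gamma^S$ (to exhibit some $g$ with $[\gamma,\gamma^g]$ of order divisible by a prime $\ne p$) with explicit computation, using Theorem~\ref{thm:mainB} to control the global structure of $\langle C\rangle$.
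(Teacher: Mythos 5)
This statement is posed in the paper as an open \emph{conjecture}: the authors give no proof of it, and indeed your proposal does not supply one either. What you have written is a reduction, not a proof. The reduction itself is essentially sound (with small fixable slips: $O_{p'}(G)$ need not equal $F(G)$, though the coprime-action argument $[V,c,c]=[V,c]$ only needs $V$ to be a normal $p'$-group; and in the component-permutation step you must take $t$ non-central in $L_1$, or pass to $E(G)/Z(E(G))$, so that the surviving entry of order $r$ is genuinely nontrivial). But after the reduction you arrive at the statement ``no almost simple group with socle $L$ admits a nontrivial $p$-element $\gamma$, $p\ne 5$, with $[\gamma,\gamma^g]$ a $p$-element for all $g\in L$,'' and at that point you write that you ``would run through the CFSG list.'' That unexecuted sweep \emph{is} the conjecture; everything before it is the routine half.

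The gap is not merely that the case analysis is long: the tools you propose to use do not apply under this hypothesis. Theorem~\ref{thm:mainB} (and the whole of Section~\ref{sec:commut}, e.g.\ the Wielandt/unique-maximal-subgroup argument of Lemma~\ref{lem:unique} and the trace computations for rank-one groups) depends essentially on $C$ being \emph{closed} under commutators, so that $[x,x^g]$ lands back in $C$ and the argument can be iterated and passed to subgroups; here $[c,d]$ is only known to be \emph{some} $p$-element, so none of that machinery transfers. Likewise, the Baer--Fischer classifications (Theorems~\ref{thm:p=3} and~\ref{thm:p=2}, and \cite[Thm.~8.4]{GMT}) assume $\langle c,d^g\rangle$ is a $p$-group, which is far stronger than $[c,d^g]$ being a $p$-element; your hope that the two conditions ``coincide for simple groups away from $\fA_5$ at $p=5$'' is itself an unproven assertion essentially equivalent to the conjecture. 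So the proposal should be regarded as a correct (and useful) reduction of the conjecture to the almost simple case, with the core difficulty untouched --- which is consistent with the paper leaving the statement open.
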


Our methods would also provide another proof of a related result
of \cite{GR, X}:

\begin{thm}
 Let $p$ be a prime and $C$ a conjugacy class of $p$-elements in the finite
 group $G$. If $CC^{-1}$ consists of $p$-elements, then $C \subset O_p(G)$.
\end{thm}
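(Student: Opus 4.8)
The plan is to recast the hypothesis as a statement about commutators. Fix $x\in C$, so $C=x^G$ and $C^{-1}=(x^{-1})^G$. For $c,d\in C$ one has $c^{-1}d=(dc^{-1})^{c}$, so $c^{-1}d$ is $G$-conjugate to $dc^{-1}\in CC^{-1}$; hence $CC^{-1}$ consists of $p$-elements if and only if $C^{-1}C$ does. Taking $c=x$ and $d=x^{g}$ gives $c^{-1}d=x^{-1}x^{g}=[x,g]$, and conversely every element of $C^{-1}C$ is conjugate to some $[x,g]$. Thus the hypothesis is \emph{equivalent} to: $[x,g]$ is a $p$-element for all $g\in G$. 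This is exactly the hypothesis of the conjecture above, but now with no restriction on $p$; the extra strength compared with Theorem~\ref{thm:mainB} (where one only asks $[c,d]$ to lie in $C$ for $c,d\in C$) is what will exclude the $\fA_5$-configuration, since for a class $C$ of $5$-cycles in $\fA_5$ the set $C^{-1}C$ already contains involutions and $3$-elements. Note also that for $p=2$ and $C$ a class of involutions the condition reads: $\langle x,x^{g}\rangle$ is a $2$-group for all $g$, i.e. $(C,C)$ is a Baer--Fischer pair; the known $p=2$ examples, by contrast, involve two \emph{distinct} classes.

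With this reformulation I would argue by minimal counterexample. Passing from $G$ to $G/O_p(G)$ preserves the hypothesis and replaces the conclusion $C\subseteq O_p(G)$ by $\bar C=\{1\}$, so I may assume $O_p(G)=1$ and must prove $x=1$. Set $L=\langle x^{G}\rangle\trianglelefteq G$; then $O_p(L)\le O_p(G)=1$ and it suffices to show $L=1$. The commutator formulation is inherited by direct factors, since for $x=(x_1,\dots,x_k)$ one has $[x,g]=([x_1,g_1],\dots,[x_k,g_k])$, a $p$-element if and only if each coordinate is. Combining this with the usual analysis of the action of $x$ on the components of $F^*(G)=E(G)\,O_{p'}(G)$ (available because $O_p(G)=1$), I would reduce, along the lines of the proof of Theorem~\ref{thm:main} and of the Baer--Fischer classification, to the almost simple case: $F^*(G)=S$ is a nonabelian simple group, $S\le G\le\Aut(S)$, and $x$ is a nontrivial $p$-element of $G$.

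In the almost simple case the task is to produce, for each nontrivial $p$-element $x$, a single element $g$ with $[x,g]=x^{-1}x^{g}$ \emph{not} a $p$-element; equivalently, $C^{-1}C$ must meet the set of elements of order divisible by a prime different from $p$. I would organize this by the same division into families of $S$ that underlies \cite[Thm.~8.4]{GMT} and Theorems~\ref{thm:p=3} and~\ref{thm:p=2}. By \cite[Thm.~8.4]{GMT}, the much stronger demand that $\langle c,d\rangle$ be a $p$-group for all $(c,d)\in C\times C^{-1}$ can occur only for $p\le 3$, which already indicates that for $p\ge 5$ there is room to find a bad product: for groups of Lie type I would choose $g$ so that $x^{-1}x^{g}$ is regular semisimple, or at least of order divisible by a primitive prime divisor of a suitable $q^{e}-1$, while for the alternating and sporadic groups a finite computation suffices. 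For $p\in\{2,3\}$ I would instead read off the explicit list from Theorems~\ref{thm:p=3} and~\ref{thm:p=2}: since every Baer--Fischer pair there uses two distinct classes, no pair is of the form $(C,C^{-1})$, and in each relevant configuration I would verify directly that some conjugate product $x^{-1}x^{g}$ has order divisible by a prime other than $p$.

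The main obstacle is precisely this last verification. The Baer--Fischer analysis only has to decide when the \emph{group} $\langle c,d\rangle$ is a $p$-group, whereas here I must exhibit one non-$p$-element among the products $x^{-1}x^{g}$, and the delicate cases are exactly the small classes (transvections and their relatives for $p=2$, the short root or order-$3$ classes for $p=3$) that give rise to genuine Baer--Fischer pairs. For these I would estimate the element orders occurring in $C^{-1}C$ using the embedding of $\langle x,x^{g}\rangle$ and, in characteristic $p$, the action of $x$ on the natural module; producing a single regular or primitive-prime-divisor element is weaker than the full control of generated subgroups required in Theorems~\ref{thm:p=3} and~\ref{thm:p=2}, so I expect the argument to be a streamlined variant of the case analysis already carried out there.
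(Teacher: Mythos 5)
Your opening reduction is correct and is exactly the right way to see the statement: since $c^{-1}d=(dc^{-1})^{c}$ and $x^{-1}x^{g}=[x,g]$, the hypothesis is equivalent to ``$[x,g]$ is a $p$-element for all $g\in G$,'' which is the form in which the result is proved in \cite{GR,X}. Note that the paper itself gives no proof of this theorem --- it only remarks that the methods used for Theorem~\ref{thm:mainB} would provide another one --- so the benchmark is the machinery of Section~\ref{sec:commut}. Your reduction to the almost simple case is sound in outline; in fact it is easier than you make it, because your hypothesis is inherited by every subgroup containing $x$ and by every quotient, so the Wielandt argument of Lemma~\ref{lem:unique} and the argument of Lemma~\ref{lem:simple} go through with only minor changes.

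The genuine gap is the almost simple case, and it is not a verification that can be outsourced to the Baer--Fischer classification. For a pair of the form $(C,C^{-1})$ that classification is vacuous: since $\langle c,d^{-1}\rangle=\langle c,d\rangle$, the condition ``$\langle c,d\rangle$ is a $p$-group for all $(c,d)\in C\times C^{-1}$'' is identical to the same condition for $(C,C)$, and the classical Baer--Suzuki theorem already rules this out in every almost simple group. (This, and not ``distinctness of the two classes,'' is why $(C,C^{-1})$ never occurs in Theorems~\ref{thm:p=3} and~\ref{thm:p=2}; note that $C^{-1}$ can perfectly well be a class distinct from $C$.) So \cite[Thm.~8.4]{GMT} and Theorems~\ref{thm:p=3}, \ref{thm:p=2} give you nothing beyond Baer--Suzuki here, while the theorem demands something strictly stronger: an element of $C^{-1}C$ that is not a $p$-element. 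A pair generating a non-$p$-group need not supply one --- already in $\fA_4$ two elements of order $3$ generate the whole group while their product has order $3$ --- so your concluding assessment, that exhibiting one bad product is ``weaker'' than the control needed for the Baer--Fischer theorems, is backwards. The expected ``streamlined variant'' of that case analysis therefore does not exist; what is required is the kind of work done in Lemmas~\ref{lem:alt}--\ref{lem:spor} for the commutator-closed hypothesis (and in \cite{GR,X} for precisely the present one): trace computations in rank-one groups showing $\tr[x,x^{g}]$ takes values incompatible with $p$-elements, the observation that elements of $C$ avoid parabolic subgroups and hence are regular semisimple, involution-inversion and structure-constant arguments (Table~\ref{tab:inv}), and sporadic inspection. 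Your proposal stops exactly where this work begins, so the almost-simple step --- which is the entire content of the theorem --- remains unproved.
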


\medskip

Analogous situations to Theorems~\ref{thm:main} and~\ref{thm:mainB} for
almost simple algebraic groups were completely classified in our
paper \cite{GM}.

The paper is organized as follows. In the next section, we prove some results
about representations of $p$-groups. In Section~\ref{sec:proof}, we prove
Theorem~\ref{thm:main} assuming the results of Section~\ref{sec:p=3} (for the
prime $3$).
We then classify pairs of conjugacy classes of $3$-elements in almost simple
groups such that every pair generates a 3-group. In Section~\ref{sec:p=2}, we
consider pairs of involutions in almost simple groups.
In Section~\ref{sec:commut}, we handle the case of Theorem~\ref{thm:mainB} with
$p\ne 5$ or when $C$ is closed under squaring. In the final section, we
consider the case when $p=5$. In both
cases, we can reduce to the case of simple groups.
\smallskip

We thank Michael Aschbacher for suggesting some variation on the hypotheses of
Theorem~\ref{thm:main}.  We also thank Pavel Shumyatsky for communicating
his question to us and Thomas Breuer, Klaus Lux, Kay Magaard, and Akos Seress
for very helpful comments.

\section{On $p$-group representations}  \label{sec:repn}

Let $p$ be an odd prime and $k$ be a field of characteristic $r \ne p$.
We write $Z_p$ for the cyclic group of order~$p$.

\begin{lem}   \label{lem:dim}
 Let $P$ be a finite $p$-group and let $V$ be an irreducible $kP$-module
 such that $[P,P]$ is not trivial on $V$. Let $\{x_1, \ldots, x_s\}$ be a
 generating set for $P$.
 \begin{enumerate}[\rm(a)]
  \item There exists $i$ so that $\dim C_V(x_i) \le (1/p) \dim V$.
  \item If each $x_j$ has order $p$, there exists $i$ with
   $\dim C_V(x_i) =(1/p) \dim V$.
 \end{enumerate}
\end{lem}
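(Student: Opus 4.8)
The plan is to reduce to the case where $V$ is absolutely irreducible and faithful, and then to locate a single generator $x_i$ that is conjugate in $P$ to a nontrivial scalar multiple of itself; the eigenvalue bookkeeping forced by such a relation yields both (a) and (b).

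First I would pass to the algebraic closure $\bar k$. Since fixed spaces commute with field extension, $\dim C_V(x)=\dim C_{\bar V}(x)$ for $\bar V=V\otimes_k\bar k$ and every $x$, and likewise $\dim V=\dim\bar V$; moreover $[P,P]$ acts nontrivially on $V$ iff it does on $\bar V$. As $V$ is irreducible, $\bar V$ is a sum of Galois conjugates $U^\sigma$ of a single absolutely irreducible module $U$, each occurring with the same multiplicity. Because $\dim C_{U^\sigma}(x)=\sigma(\dim C_U(x))=\dim C_U(x)$ (a rational integer), the ratio $\dim C_V(x)/\dim V$ equals $\dim C_U(x)/\dim U$ for every $x$, and $[P,P]$ is nontrivial on $U$. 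Replacing $P$ by its image in $\GL(U)$ (the generators map to generators, and a good index will pull back), I may thus assume $V=U$ is faithful and absolutely irreducible, so $Z(P)$ acts by scalars and, being nonabelian, $P$ satisfies $1\ne[P,P]\cap Z(P)$.

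Next I would produce a \emph{good} generator, meaning an $x_i$ admitting $y\in P$ with $1\ne[x_i,y]\in Z(P)$. Let $\gamma_1=P\ge\gamma_2=[P,P]\ge\cdots$ be the lower central series and $\gamma_c$ its last nontrivial term, so $\gamma_c\le Z(P)$ and $\gamma_{c+1}=1$ (here $c\ge2$). Since $[\gamma_{c-1},\gamma_2]\subseteq\gamma_{c+1}=1$, the map $(u,g)\mapsto[u,g]$ from $\gamma_{c-1}\times P$ into the central subgroup $\gamma_c$ is bi-additive and trivial on $\gamma_{c-1}\times\gamma_2$; writing $g$ modulo $\gamma_2$ in terms of the generators shows that $\gamma_c$ is generated by the commutators $[u,x_i]$ with $u\in\gamma_{c-1}$. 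Choosing $1\ne w\in\gamma_c$ and expanding $w=\prod_i[w_i,x_i]$ by bi-additivity, some factor $[w_i,x_i]$ is nontrivial; as it lies in $\gamma_c\le Z(P)$, the generator $x_i$ is good with $y=w_i$. This step---guaranteeing the good element can be taken to be one of the \emph{given} generators---is the main point, and is precisely what the generation of $\gamma_c$ by the $[u,x_i]$ provides.

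Finally I would run the eigenvalue argument on a good generator $x_i$. Write $\lambda=[x_i,y]\in Z(P)$, a nontrivial scalar and hence a nontrivial $p$-power root of unity. From $y^{-1}x_iy=x_i\lambda$ the operator $x_i$ is conjugate to $\lambda x_i$, so the two have equal multisets of eigenvalues; thus the eigenvalue multiplicity function of $x_i$ is invariant under multiplication by $\lambda$. As the eigenvalues lie among the $p^a$-th roots of unity (with $p^a=|x_i|$) and $\lambda$ lies there too, each orbit of multiplication by $\lambda$ has size $\operatorname{ord}(\lambda)\ge p$; summing the constant multiplicity over the orbit of $1$ gives $\operatorname{ord}(\lambda)\cdot\dim C_V(x_i)\le\dim V$, whence $\dim C_V(x_i)\le(1/p)\dim V$, proving (a). For (b) every generator has order $p$, so the good generator has order $p$ and its eigenvalues lie among the $p$ distinct $p$-th roots of unity; then $\lambda$ is forced to be a primitive $p$-th root, multiplication by $\lambda$ acts as a single orbit on all of them, all $p$ eigenvalue multiplicities coincide, and $\dim C_V(x_i)=(1/p)\dim V$. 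I expect the reduction of the first paragraph (matching the \emph{exact} ratio across Galois conjugates, needed for the equality in (b)) and the generation statement for $\gamma_c$ to be the only delicate points; the remainder is formal.
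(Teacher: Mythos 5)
Your proof is correct, and it takes a genuinely different route from the paper's. The paper reduces to the absolutely irreducible case in one line (viewing $V$ over $\mathrm{End}_{kP}(V)$) and then invokes monomiality of irreducible $p$-group representations in coprime characteristic: $V=\la_H^P$ for a one-dimensional representation of a proper subgroup $H$, hence $V=W_1\oplus\cdots\oplus W_p$ is induced from a maximal (normal, index $p$) subgroup $M\ge H$; since the $x_i$ generate $P$, some $x_i\notin M$ permutes the blocks in a $p$-cycle, and then $\dim C_V(x_i)=\dim C_{W_1}(x_i^p)\le \dim W_1=(1/p)\dim V$, with equality when $x_i^p=1$. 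You never use the induced-module structure: instead you locate a generator $x_i$ and $y\in P$ with $\lambda:=[x_i,y]$ a nontrivial element of $Z(P)$ --- your lower central series argument, using bilinearity of commutation into the central term $\gamma_c$ to write any $1\ne w\in\gamma_c$ as $\prod_i[w_i,x_i]$, is sound and is indeed the crux --- so that by Schur's lemma $\lambda$ is a nontrivial scalar, $x_i$ is conjugate to $\lambda x_i$, and the eigenvalue multiplicities of $x_i$ are constant on $\langle\lambda\rangle$-orbits; this yields (a), and when $x_i$ has order $p$ it forces $\lambda$ primitive of order $p$ and all $p$ multiplicities equal, yielding (b). What the paper's route buys: brevity (granted the standard monomiality theorem), and, more importantly, the explicit block decomposition $V=W_1\oplus\cdots\oplus W_p$ that is reused verbatim at the start of the proof of Corollary~\ref{cor:pgroup}, so the paper's proof doubles as setup for that corollary; your argument could not be substituted there without reworking it. What your route buys: it is self-contained (only Schur's lemma, elementary commutator identities, and eigenvalue counting, with no appeal to monomiality), it gives the slightly sharper bound $\dim C_V(x_i)\le(1/\operatorname{ord}\lambda)\dim V$ in (a), and your Galois-descent reduction is more careful than the paper's about why the ratio $\dim C_V(x)/\dim V$ is preserved under the reduction --- which is precisely what the equality statement in (b) requires.
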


\begin{proof}
By viewing $V$ as a module over $\mathrm{End}_{kP}(V)$, we may assume that
$V$ is absolutely irreducible and $k$ is algebraically closed. Thus,
$V = \lambda_H^P$ for some $1$-dimensional representation $\la$ of a proper
subgroup $H$ of $P$. Let $M$ be a maximal subgroup of $P$ containing $H$.
Thus, $V = W_M^P = W_1 \oplus \cdots \oplus W_p$ where the $W_i$ are
irreducible $kM$-submodules and $P$ permutes the $W_j$. Some $x = x_i$ must
permute the $W_j$, whence $\dim C_V(x) = \dim C_W(x^p) \le \dim W$.
This proves both (a) and (b).
\end{proof}

\begin{cor}   \label{cor:pgroup}
 In the situation of Lemma~\ref{lem:dim} assume that
 $V = \bigoplus_i [x_i,V]$ and that each $x_i$ has order $p$.
 Then $P$ has a section isomorphic to $Z_p \wr Z_p$.
\end{cor}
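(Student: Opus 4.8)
The plan is to produce the wreath product as a section arising from the imprimitive structure already displayed in the proof of Lemma~\ref{lem:dim}. First I would pass to the faithful situation: replacing $P$ by $P/C_P(V)\le\GL(V)$, keeping only those generators of order exactly $p$ (an $x_i$ acting trivially is the identity here and contributes the zero summand, so it may be dropped without losing generation), I may assume $k=\bar k$, that $V$ is absolutely irreducible and faithful, and that, as in the lemma, there is a normal subgroup $M\trianglelefteq P$ of index $p$ with $V|_M=W_1\oplus\cdots\oplus W_p$, the blocks being cyclically permuted by $P/M\cong Z_p$. Since $\operatorname{char}k=r\neq p$, each $x_i$ is semisimple, so $V=C_V(x_i)\oplus[x_i,V]$ and $\dim[x_i,V]=\dim V-\dim C_V(x_i)$.

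The first real step is a dimension count. Any generator $x_i\notin M$ generates $P/M$, hence acts as a single $p$-cycle on the blocks with $x_i^p=1$, so by the computation in Lemma~\ref{lem:dim} it satisfies $\dim C_V(x_i)=\frac1p\dim V$ and $\dim[x_i,V]=\frac{p-1}{p}\dim V$. As $V=\bigoplus_i[x_i,V]$ is direct we have $\sum_i\dim[x_i,V]=\dim V$; since $p$ is odd, two such large summands already sum to $\frac{2(p-1)}{p}\dim V>\dim V$, so exactly one generator, say $x:=x_1$, lies outside $M$, and the remaining $x_2,\dots,x_s\in M$ satisfy $\sum_{i\ge2}\dim[x_i,V]=\frac1p\dim V$. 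In particular some $x_i\in M$ acts nontrivially on some block, and conjugating by a power of $x$ I obtain $y\in M$ of order $p$ acting nontrivially on $W_1$.

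Next I build the candidate wreath product. Put $y_j:=x^{j-1}yx^{-(j-1)}$ for $1\le j\le p$, so that each $y_j\in M$ acts nontrivially on $W_j$ and $x$ permutes $y_1,\dots,y_p$ cyclically (using $x^p=1$). Let $B=\langle y_1,\dots,y_p\rangle\trianglelefteq P_0:=B\langle x\rangle$. As $\Phi(B)$ is characteristic in $B$ it is normal in $P_0$, and the Frattini quotient $\bar B=B/\Phi(B)$ is a cyclic $\FF_p[\langle\bar x\rangle]$-module generated by $\bar y_1$; since $\FF_p[Z_p]=\FF_p[t]/(t-1)^p$ is a local principal ideal ring, $\bar B\cong\FF_p[t]/(t-1)^k$ for some $1\le k\le p$. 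Moreover $\langle\bar x\rangle\cap\bar B=1$ in $P_0/\Phi(B)$, because $\bar x^a\in\bar B$ means $x^a\in B\Phi(B)=B\le M$, forcing $p\mid a$; hence $P_0/\Phi(B)\cong\bar B\rtimes\langle\bar x\rangle$. If $k=p$, then $\bar B\cong(Z_p)^p$ with $\bar x$ cyclically permuting a basis, so $P_0/\Phi(B)\cong Z_p\wr Z_p$ is the desired section of $P$.

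Everything thus reduces to the crux: showing $k=p$, equivalently that $\bar y_1,\dots,\bar y_p$ are $\FF_p$-independent, equivalently that $\bar x-1$ acts on $M/\Phi(M)$ with nilpotency index $p$. This is the step I expect to be the main obstacle, and it is precisely where the direct-sum hypothesis must be spent: the extraspecial group $p^{1+2}$ of exponent $p$ (where $M$ is abelian, $B\le M$, and $k\le p-1$) has no $Z_p\wr Z_p$ section, and there indeed $V\neq\bigoplus_i[x_i,V]$ since the inner generator already has $\dim[x_i,V]=\frac{p-1}{p}\dim V$, far larger than the total $\frac1p\dim V$ that the direct sum demands of $\sum_{i\ge2}\dim[x_i,V]$. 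To exploit the hypothesis I would transport the equality $\sum_{i\ge2}\dim[x_i,V]=\frac1p\dim V$ into a statement inside a single block: projecting along $[x,V]$ onto $C_V(x)\cong W_1$ via $\pi=\frac1p\sum_a x^a$ shows that the subspaces $T_j:=x^{1-j}\bigl(\sum_{i\ge2}[x_i,W_j]\bigr)\subseteq W_1$ give a direct decomposition $W_1=\bigoplus_{j=1}^p T_j$, whose pieces are permuted by $x$ at the level of the conjugated generators. The heart of the matter, and the part requiring genuine care, is to convert this characteristic-$r$ non-overlap of commutator spaces into the characteristic-$p$ independence of the $\bar y_j$, i.e.\ to bridge the two linear-algebra pictures living over different fields.
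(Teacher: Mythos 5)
Your setup is correct and, up to the point where you show that exactly one generator $x=x_1$ lies outside $M$ and conjugate to obtain $y\in M$ acting nontrivially on $W_1$, it runs parallel to the opening of the paper's own argument. But the proposal stops at precisely the decisive step: you never prove $k=p$, i.e.\ that $\bar y_1,\dots,\bar y_p$ are independent in $B/\Phi(B)$, and you say so yourself. Since everything before that is formal bookkeeping (the quotient $P_0/\Phi(B)\cong\bar B\rtimes\langle\bar x\rangle$ is a wreath product exactly when $k=p$), what you have written is a reduction of the corollary to an unproved claim, not a proof. There is also a secondary weakness feeding into this: your transported decomposition $W_1=\bigoplus_j T_j$ is coarser than what is needed. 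To control the components of a single generator you want the finer statement that $W_1$ is the direct sum of the commutator spaces $[\tilde u_{ij},W_1]$ of \emph{all} components of \emph{all} generators $x_i$, $i\ge 2$; this finer directness is exactly what the paper extracts (from irreducibility of $W_1$ under $M$ together with the dimension count) before doing anything else.

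For contrast, the paper never faces your ``two characteristics'' problem, because it argues by induction on $\dim V$: having shown $W_1=\bigoplus_{i,j}[y_{ij},W_1]$ for the full set of component generators, it applies the corollary inductively to $M|_{W_1}$ when $\dim W_1>1$, and when $\dim W_1=1$ it reads off $Z_p\wr Z_p$ directly from $\langle x_1,x_2\rangle$ with $x_2$ a pseudoreflection. Your route can in fact be completed, and the missing bridge is the non-generator property of the Frattini subgroup: if some nontrivial component $\tilde u_{ij_0}$ had a Frattini-redundant image in $\bar M:=M|_{W_1}$, then $\bar M$ would be generated by the remaining components, whence $[\tilde u_{ij_0},W_1]\subseteq[\bar M,W_1]=\sum_{(i',j')\ne(i,j_0)}[\tilde u_{i'j'},W_1]$, contradicting the (full) directness and faithfulness of the action. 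Hence the nontrivial $\tilde u_{ij}$ have independent images in $\bar M/\Phi(\bar M)$; mapping $B$ into the elementary abelian group $(\bar M/\Phi(\bar M))^p$, the image of $y_1y_2\cdots y_p$ is the diagonal element with entry $\sum_j\overline{\tilde u_{ij}}\ne 0$, so $y_1\cdots y_p\notin\Phi(B)$, which is your $k=p$. This is a short argument, but it is the entire content of the corollary, and it (or any substitute) is absent from your write-up; as submitted, the proof is incomplete.
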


\begin{proof}
As in the previous proof, we may write $V = W_1 \oplus \cdots \oplus W_p$
such that the stabilizer of each $W_j$ is a maximal (normal) subgroup $M$ and
$x=x_1$ permutes the non-isomorphic irreducible $kM$-modules $W_i$.
It follows that $x_j$, $j > 1$, is in $M$ because otherwise
$\dim C_V(x_j) = (1/p) \dim V$, contradicting the fact that
$\dim V = \sum \dim[x_i,V]$. Thus, $M$ is the normal closure of
$\langle x_2, \ldots, x_s \rangle$. We can identify each $W_i$ with $W_1$
(as vector spaces) and assume that $x$ just permutes the coordinates.
Writing $x_j=(y_{j1}, \ldots, y_{jp})$ for $j=2, \ldots, s$, we see that
the action of $M$ on $W_1$ is generated by
$\{ y_{ji}\mid j \ge 2, 1 \le i \le s\}$ and that
$\dim W_1 = \sum \dim [y_{ij}, W_1]$.  Since $W_i$ is irreducible for $M$,
this implies that $W_1 = \bigoplus_{i,j}[y_{ij}, W_1]$. If $\dim W_1 > 1$,
it follows by induction that $M$ has a section isomorphic to $Z_p \wr Z_p$.
If $\dim W_1 =1$, then the hypotheses imply that (after reordering), $x_2$ is
a pseudoreflection and $x_j$, $j > 2$, is trivial on $V$.
So $\langle x_1, x_2 \rangle$ acts as $Z_p \wr Z_p$ on $V$, whence the result.
\end{proof}

\section{Proof of Theorem~\ref{thm:main}}  \label{sec:proof}

For $S$ a subset of a finite group $G$, let $e(S)$ denote the largest order
of an element of~$S$.

\begin{thm}
 Let $G$ be a finite group and $p$ a prime. Assume that $C$ and $D$ are
 normal subsets of $G$ such that for all $(c,d) \in C \times D$:
 \begin{enumerate}
  \item[\rm(1)] $\langle c, d \rangle$ is a $p$-group; and
  \item[\rm(2)] no section of $\langle c,d \rangle$ is isomorphic to
   $Z_p\wr Z_p$.
 \end{enumerate}
 Then $[C,D] \le O_p(G)$.
\end{thm}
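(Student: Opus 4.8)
The plan is to reduce to a statement about a single pair and then argue by induction on $|G|$. Since $C$ and $D$ are normal subsets, $[C,D]$ is a normal subgroup of $G$, and $[C,D]\le O_p(G)$ is equivalent to $[c,d]\in O_p(G)$ for every $(c,d)\in C\times D$. Passing to $G/O_p(G)$ preserves both hypotheses, since $\langle\bar c,\bar d\rangle$ is a quotient of $\langle c,d\rangle$ and the absence of a section isomorphic to $Z_p\wr Z_p$ is inherited by sections; so I may assume $O_p(G)=1$ and must then show that every $c\in C$ commutes with every $d\in D$. In a minimal counterexample I would study $F^*(G)=F(G)E(G)$. As $O_p(G)=1$, the Fitting subgroup equals the $p'$-group $O_{p'}(G)$, and the two regimes to treat are $E(G)=1$ (so $F^*(G)$ is a $p'$-group) and $E(G)\ne1$ (there are components).

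Assume first $E(G)=1$, so that $V_0:=F^*(G)=O_{p'}(G)$ is a $p'$-group with $C_G(V_0)=V_0$; then a $p$-element acts faithfully on $V_0$, and $[c,d]=1$ follows once $c$ and $d$ are shown to commute on every chief factor $V$ below $V_0$. This is exactly where Section~\ref{sec:repn} enters. Fix such a $V$ and put $P=\langle c,d\rangle$. The crucial point is that the hypothesis is imposed on all conjugates $c^g,d^g$, which is far stronger than a condition on $P$ alone: for a $p$-element $x$ and $v\in V$ the element $x^{-1}x^{v}$ lies in $V$ and is trivial precisely when $v\in C_V(x)$, so requiring $\langle c,d^{v}\rangle$ and its $P$-translates to be $p$-groups for all $v$ forces the commutator subspaces of $c,d$ and their conjugates to fill $V$ as a direct sum. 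After reducing to generators of order $p$ and using the fixed-point estimate of Lemma~\ref{lem:dim}(a), one is then in the situation $V=\bigoplus_i[x_i,V]$ of Corollary~\ref{cor:pgroup}, which produces a section $Z_p\wr Z_p$ of $P$ unless $P$ acts trivially on $V$. Hypothesis~(2) thus forces trivial action on each $V$, whence $[c,d]=1$.

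It remains to handle $E(G)\ne1$; let $M=S_1\times\cdots\times S_t$ be a minimal normal subgroup with the $S_i$ nonabelian simple and permuted by $G$, so that $c,d$ map into $\Aut(M)=\Aut(S)\wr\fS_t$. A first step controls the permutation action: a $p$-element inducing a nontrivial $p$-cycle on a block of components, together with any nontrivial action inside a component, visibly generates a section $Z_p\wr Z_p$, so hypothesis~(2) forces $c$ and $d$ to stabilise each component and to act on it by inner-diagonal and field automorphisms. Restricting to a single factor $S$ reduces the claim to almost simple groups, where the induced classes form a pair with $\langle c',d'\rangle$ a $p$-group for all its members, that is, a Baer--Fischer pair. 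For $p\ge5$ no such nontrivial pair exists by \cite[Thm.~1.9]{GMT}, forcing $[c,d]=1$; for $p=2,3$ I would appeal to the explicit classification of the following two sections (Theorems~\ref{thm:p=3} and~\ref{thm:p=2}) and check case by case that the surviving pairs still give $[c,d]=1$, the noncommuting pairs being exactly those that create a wreath section. The main obstacle is precisely this almost simple analysis for $p=2$ and $p=3$, where genuine Baer--Fischer pairs occur (for instance in $\Aut(Fi_{22})$ and $2.\tw2E_6(2).2$): it rests on the classification of finite simple groups and is the real content of the theorem, whereas the abelian case is disposed of cleanly by Corollary~\ref{cor:pgroup}.
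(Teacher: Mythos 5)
Your overall architecture (the Section~\ref{sec:repn} representation theory for the soluble part, the almost simple classification for the components) parallels the paper's, but two of your key steps have genuine gaps.

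First, the Fitting case. You want to apply Corollary~\ref{cor:pgroup} to a $G$-chief factor $V$ of $O_{p'}(G)$ with $P=\langle c,d\rangle$. That corollary has two hypotheses you cannot verify in this generality: $V$ must be an \emph{irreducible $kP$-module}, and one needs the decomposition $V=[c,V]\oplus[d,V]$. A chief factor is irreducible for $G$, not for $P$; and the (correct) Sylow-conjugation argument only gives $V=C_V(c)C_V(d)$, hence $\dim[c,V]+\dim[d,V]\le\dim V$. To get the direct sum one also needs $[c,V]+[d,V]=V$, equivalently $C_{V^*}(P)=0$ on the dual module; killing $C_V(P)$ and $C_{V^*}(P)$ requires $P$-irreducibility, which you do not have. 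The paper closes exactly this hole by a different use of minimality: it chooses $(c,d)$ with $[c,d]\ne1$ not centralizing the unique minimal normal subgroup $N$, observes $[c,d]\notin O_p(N\langle c,d\rangle)$, and concludes $G=N\langle c,d\rangle$ by minimality of the counterexample --- so that $N$ \emph{is} an irreducible $\langle c,d\rangle$-module and Corollary~\ref{cor:pgroup} genuinely applies. Relatedly, your phrase ``after reducing to generators of order $p$'' hides a real step (Corollary~\ref{cor:pgroup} and Lemma~\ref{lem:dim}(b) need order exactly $p$): the paper builds $e(C)+e(D)$ into its minimality precisely so that it can pass from $C$ to $C^p$, and this reduction also makes $p=2$ immediate, so Theorem~\ref{thm:p=2} is never needed for this theorem.

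Second, the component case. Your claim that a $p$-element permuting the components in a nontrivial $p$-cycle, combined with any nontrivial action inside a component, ``visibly'' produces a $Z_p\wr Z_p$ section is false: take $d$ a pure $p$-cycle of the components and $c=(c_1,c_1,\ldots,c_1)$ diagonal with $c_1\ne1$; then $\langle c,d\rangle$ is abelian. So hypothesis~(2) does not allow you to assume $c$ and $d$ normalize every component, and the configuration where both move components is exactly where the paper works hardest: it uses minimality together with the nontriviality of $(N_G(P)\cap N)/P$ for $P$ a Sylow $p$-subgroup of $N$ (via \cite[Thm.~X.8.13]{HB} and \cite{GMN}) to show that $[c,d]$ would have to centralize $(N_G(P)\cap N)/P$, and then derives a contradiction from the semiregularity of $c,d$ on the components; the case where $c$ normalizes all components but $d$ does not is handled by a conjugation trick inside one factor followed by Corollary~\ref{cor:p=3}, resp.\ \cite[Thm.~8.4]{GMT}. (Also, for $p\ge5$ in the almost simple case the correct citation is \cite[Thm.~8.4]{GMT}; the statement you quote, \cite[Thm.~1.9]{GMT}, carries the hypothesis $\langle C\rangle=\langle D\rangle$, which is not available here.)
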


\begin{proof}
Let $G, C, D$ be a minimal counterexample (say with
$|G| + |C| + |D| + e(C) + e(D)$ minimal). Since the properties are inherited
under homomorphic images, $O_p(G) = 1$. Clearly, $G = \langle C, D \rangle$.

If $C'$ is a proper normal subset of $C$, then by minimality $[C',D]=1$ and
$[C \setminus C', D]=1$. Thus, we may assume that $C$
and $D$ are conjugacy classes of $G$ and that $G=\langle C, D \rangle$.

Let $N$ be a minimal normal subgroup of $G$.  If $M$ is another minimal
normal subgroup, then by minimality, $[C,D]$ is a $p$-group in $G/M$ and
also in $G/N$.  Since $G$ embeds in $G/M \times G/N$, this would imply that
$[C,D]$ is a $p$-group. So $N$ is the unique minimal normal subgroup of $G$.
Note that this implies that either $C_G(N)=1$ or $N$ is abelian
(since $N$ is contained in any nontrivial normal subgroup).

By minimality, $[C,D] = NQ$ where $Q$ is a $p$-group.
If $Q$ centralizes $N$, then $N$ is abelian and so has order prime to $p$,
whence $Q=O_p([C,D]) \le O_p(G)=1$ and $[C,D]=N$.

If the elements of $C$ have order greater than $p$, then by minimality,
$[C^p,D] \le O_p(G)=1$, whence $N \le \langle C^p \rangle$ and so $D$
centralizes $N$.  Thus, $Q \le C_G(N)$ and so $Q=1$ and $[C,D] \le N$ by
the above. Since $N$ is a $p'$-group and $D$ consists of $p$-elements, this
implies that $[C,D]=1$, contradicting the fact that $G$ is a counterexample.
Thus, $C$ and $D$ consist of elements of order $p$. In particular, the result
follows if $p =2 $ (since the hypotheses imply that if $(c,d) \in C \times D$
are involutions, then $cd=dc$).

First suppose that $N$ is an elementary abelian $r$-group for a prime $r$
(necessarily $r \ne p$).  Note that neither $C$ nor $D$ centralizes $N$
(for then $[C,D]=[C^r,D^r]=1$). Choose $(c,d) \in C \times D$ with $[c,d]$
nontrivial. Since $[c,d]$ does not centralize $N$, $[c,d]$ is not in $O_p(H)$
where $H= N\langle c, d \rangle$, whence by minimality $G=H$. In particular,
$G=NQ$ where $Q=\langle c, d \rangle$ is a $p$-group.
Let $H_1 = \langle C \cap Q \rangle$ and $H_2 = \langle D \cap Q \rangle$.

Note that if $(c,d) \in C \times D$, then for any $x \in N$,
$\langle c,d^x \rangle$ is conjugate to a subgroup of $Q$, whence
$N=C_N(c)C_N(d)$. In particular, $\dim [c,N] + \dim [d,N] = \dim N$,
whence $N=[c,N] \oplus [d,N]$ and so by Corollary~\ref{cor:pgroup},
$Q$ has a section isomorphic to $Z_p \wr Z_p$, a contradiction.

Thus, we may assume that $N=L_1 \times\cdots\times L_t$, where $L_i \cong L$
is a nonabelian simple group. Since this is the unique minimal normal subgroup
of $G$, $N=F^*(G)$ has trivial centralizer in $G$. Arguing as above, we may
choose $(c,d) \in C \times D$ with $[c,d] \ne 1$ and $G=NQ$ where
$Q=\langle c, d \rangle$ is a $p$-group.

First suppose that $p$ does not divide $|N|$. Then $Q$ is a Sylow $p$-subgroup
of $NQ$. By Sylow's theorems, $Q$ normalizes a Sylow $\ell$-subgroup of $N$ for
each prime $\ell$. Thus $Q$ normalizes a Sylow $\ell$-subgroup $S$ for some
prime~$\ell$ with $[c,d]$ not centralizing $S$.  By minimality,
$[c,d] \in O_p(SQ)$, a contradiction.

So we may assume that $Q$ normalizes some nontrivial Sylow $p$-subgroup $P$
of $N$.

If $t=1$, the result follows by \cite[Thm.~8.4]{GMT} (for $p>3$) and by
Corollary~\ref{cor:p=3} (for $p=3$).  So assume that $t > 1$.

Suppose that $c$ normalizes each $L_i$.
Then $d$ acts transitively whence $s=p$. Write $c=(c_1, \ldots, c_p)$ where
$c_j \in \Aut(L_j)$ and $d$ permutes the coordinates. Assume that $c_1 \ne 1$.
If $c_j=1$ for all $j > 1$, then $\langle c, d \rangle \cong Z_p \wr Z_p$,
a contradiction.

So suppose that some $c_j \ne 1$ for $j > 1$. Let $y=c^x$ where $x \in L_j$.
Then $\langle y, d \rangle$ is a $p$-group, whence the group generated by
$\langle c_1, c_j^x \rangle$ is a $p$-group in $\Aut(L_1)$ for any
$x \in L_1$. Now apply Corollary~\ref{cor:p=3} (see also \cite[Thm.~8.4]{GMT}
for $p\ge5$).

So we may assume that $c$ and $d$ both induce nontrivial permutations
on $\{L_1, \ldots, L_s\}$. Let $J=N_G(P)$. By \cite[Thm.~X.8.13]{HB} and
\cite[Thm.~1.1]{GMN}, $(J \cap N)/P$ is nontrivial.  By minimality,
$[c,d] \in O_p(J/P)$, whence $[c,d]$ centralizes $(J \cap N)/P$.
In particular, $[c,d]$ must normalize each $L_i$. Thus, $\langle c, d \rangle$
induces an abelian group of permutations on the set of components. Thus,
$s = p$ or $p^2$.  Since $\langle c, d^x \rangle$ is a $p$-group for any
$x \in J \cap N$, we see that $C_{J\cap N/P}(c)C_{J\cap N/P}(d) = J\cap N/P$,
but since $c$ and $d$ act semiregularly on the set of components,
this is not the case.
\end{proof}

\section{Pairs of conjugacy classes of $3$-elements}   \label{sec:p=3}
In this section we classify Baer--Fischer pairs of 3-elements in finite
non-abelian almost simple groups.

It turns out that all examples are finite analogues of examples for possibly
disconnected almost simple algebraic groups, as classified in \cite{GMT}
and \cite{GM}. More precisely they can be obtained as follows:

\begin{exmp}  \label{exmp:D4b}
Let $k$ be an algebraically closed field of characteristic~3 and
$G=\SO_8(k).3$, the extension of the simple algebraic group $G^\circ=\SO_8(k)$
by a graph automorphism of order~3. Let $C_1$ be the class of root elements
in $G^\circ$, with centralizer of type $3A_1$, and $C_2$ the class of the
graph automorphism with centralizer $G_2(k)$ in $G^\circ$. In
\cite[Ex.~3.4]{GM} we showed that $C_1C_2$ consists of 3-elements.
\par
In the finite group $G_0=\OO^+_8(3).3$ an explicit computation with the
character table yields that $(C_1\cap G_0)(C_2\cap G_0)$ consists of three
conjugacy classes with representatives the elements denoted $u_2,u_3,u_4$ in
\cite[Tab.~8]{MDec}, of orders 3,9,9 respectively. More precisely the elements
of $D_1\cap G_0$ are hit once, those from $D_2\cap G_0$ thrice, and those
from one of the three rational classes in $D_3\cap G_0$ are hit six times
by $C_1C_2$, where for $1\le i\le 3$ we let $D_i$ denote the class of $u_{i+1}$
in $G$. A calculation with the centralizer orders then shows that the same is
true for all groups $\OO_8^+(3^a).3$. As both classes $C_1$ and $C_2$ have
non-empty
intersection with $\tw3D_4(3^a).3\le\OO_8^+(3^a).3$, this also gives a pair of
classes of 3-elements in $\tw3D_4(3^a).3$ with all products being 3-elements.
\par
Explicit computation in $\OO_8^+(3).3$ shows that there are pairs
$(x,y)\in C_1\times C_2$ such that $xy\in D_3$ and $\langle x,y\rangle$ has
order~243. Since $D_3$ is the class of maximal dimension among the $D_i$, it
is dense in $C_1C_2$, and so all pairs in $C_1\times C_2$
generate a 3-group. Thus we get examples in $\OO_8^+(3^a).3$. Now both
classes $C_1,C_2$ are stabilized by the graph-field automorphisms of
$\SO_8(k).3$, whence we also obtain such examples for $\tw3D_4(3^a)$. The
remark at the end of \cite[Ex.~3.4]{GM} shows that we generate the same 3-group
when taking suitable long and short root elements in $G_2(3)$.
\end{exmp}

We adopt the notation for outer automorphisms of Lie type groups from
\cite[2.5.13]{GLS}. Thus, in particular graph-field automorphisms only exist
for untwisted groups, and for twisted groups, field automorphisms have order
prime to the order of the twisting.

\begin{thm}   \label{thm:p=3}
 Let $G$ be a finite almost simple group. Suppose that $c, d \in G$ are
 non-trivial $3$-elements such that $\langle c, d^g \rangle$ is a $3$-group
 for all $g \in G$.  Then one of the following holds (up to interchanging
 $c$ and $d$):
 \begin{enumerate}[\rm(1)]
  \item $G=G_2(3^a)$, $c$ is a long root element and $d$ is a short root
   element;
  \item $F^*(G)= \OO^+_8(3^a)$, $c$ is an inner 3-central element and $d$ a
   graph automorphism of order~3 with centralizer $G_2(3^a)$; or
  \item $F^*(G)= \tw3D_4(3^a)$, $c$ is an inner 3-central element and $d$ a
   graph automorphism of order~3 with centralizer $G_2(3^a)$.
 \end{enumerate}
\end{thm}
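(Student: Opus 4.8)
The plan is to argue by reduction to simple groups and then run a case division according to the classification of finite simple groups, treating the Lie type groups in defining characteristic separately from all other socles. First I would pass to conjugacy classes: setting $C=c^G$ and $D=d^G$, the hypothesis says precisely that $\langle c',d'\rangle$ is a $3$-group for every $(c',d')\in C\times D$, so $(C,D)$ is a Baer--Fischer pair. I would then record the key reformulation that a $3$-subgroup of an almost simple group is severely constrained: if $S=F^*(G)$ is of Lie type in characteristic $3$, then every $3$-subgroup is unipotent and hence lies in the unipotent radical of a Borel subgroup, so the Baer--Fischer condition becomes the geometric statement that $c$ and every conjugate $d^g$ lie in a common Borel subgroup. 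Away from this situation $3$ is a non-defining prime, and I would take as my starting point the general bound \cite[Thm.~8.4]{GMT}, which guarantees that Baer--Fischer pairs can occur only for $p\le3$.

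For socles that are alternating, sporadic, or of Lie type in characteristic different from $3$, the strategy is to show that no Baer--Fischer pair exists. Here a $3$-element acts with comparatively large support, so the product of two $3$-elements will as a rule have order divisible by a prime larger than $3$. Concretely I would estimate the class multiplication constant
\[
 a_{C,D}(z)=\frac{|C|\,|D|}{|G|}\sum_{\chi\in\operatorname{Irr}(G)}
 \frac{\chi(c)\,\chi(d)\,\overline{\chi(z)}}{\chi(1)}
\]
and exhibit some $z$ of order not a power of $3$ with $a_{C,D}(z)\ne0$, contradicting that $\langle c,d\rangle$ is a $3$-group. The generic families are handled by character estimates combined with the action on the natural module, while the finitely many small groups (the sporadic groups and the small-rank exceptions) are dispatched by direct computation with the known character tables. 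This branch should leave no surviving case.

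The heart of the proof is the defining-characteristic case, where $c$ and $d$ are unipotent, possibly modified by graph, field, or graph-field automorphisms of $3$-power order. Using the reformulation above, the condition that $c$ and $d^g$ always share a Borel subgroup is a statement about fixed points on the building, and it forces the support of at least one of $c,d$ to be minimal. To make this precise I would let $\langle c,d\rangle$ act on the natural (or adjoint) module $V$ over $\overline{\FF}_3$ and use the fixed-space dimension bounds of Lemma~\ref{lem:dim} (and its companion Corollary~\ref{cor:pgroup}): for $C_V(c)$ to meet every conjugate of $C_V(d)$, both commutator spaces $[c,V]$ and $[d,V]$ must be small, which confines $c$ and $d$ to the smallest unipotent classes. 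Passing to the centralizer of a long root element, which is again almost of Lie type of smaller rank, I would set up an induction on the rank that reduces the inner--inner situation to a short list of small-rank base cases. Since the product of two regular, or even moderately large, unipotent elements is generically regular semisimple and hence of order divisible by primes other than $3$, every configuration is eliminated except a long root element paired with a short root element, which survives only in $G_2(3^a)$ because of the special isogeny in characteristic $3$; this is conclusion~(1).

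Finally I would treat the outer cases, where $d$ is a graph automorphism of order $3$. For socle $D_4$ the triality automorphism has centralizer $G_2(3^a)$, and I would reduce the analysis of products of the $3$-central inner class with the triality class to the $G_2$-computation by restricting to this centralizer; the explicit verification that these products consist of $3$-elements is exactly the character-table calculation carried out in Example~\ref{exmp:D4b}, and the same configuration descends to the twisted form $\tw3D_4(3^a)$, yielding conclusions~(2) and~(3). The main obstacle is precisely this defining-characteristic analysis: one must simultaneously construct the genuine examples attached to the $G_2$/triality phenomenon in characteristic $3$ and prove that no other pair of unipotent or unipotent-times-graph classes survives. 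This requires careful control of unipotent class fusion under the graph automorphisms and a uniform argument that any larger-support product leaves the unipotent variety, and it is here that the bulk of the work lies.
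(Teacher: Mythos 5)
Your overall architecture (reduce to classes, split by defining versus non-defining characteristic, construct the $G_2$/triality examples, eliminate everything else) matches the paper's, but the tool you place at the heart of the defining-characteristic case cannot do the job. Lemma~\ref{lem:dim} and Corollary~\ref{cor:pgroup} are stated and proved for a $p$-group acting on a module over a field of characteristic $r\ne p$: the proof writes an irreducible module as induced from a $1$-dimensional representation of a proper subgroup, which is only available in coprime characteristic. You propose to apply these bounds to the $3$-group $\langle c,d\rangle$ acting on the natural or adjoint module over $\overline{\FF}_3$, i.e.\ in characteristic equal to $p$, where a $p$-group has no nontrivial irreducible representations, every fixed space is nonzero, and the dimension-counting conclusions are simply false. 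So the step ``both $[c,V]$ and $[d,V]$ must be small, which confines $c$ and $d$ to the smallest unipotent classes'' has no proof behind it. In the paper these two results are used only in the proof of Theorem~\ref{thm:main} (acting on an elementary abelian $r$-group, $r\ne p$), never in the classification; the inner--inner case in characteristic~$3$ is instead quoted from \cite[Thm.~4.6]{GMT}, and the mixed cases are handled by an induction through end-node parabolic subgroups $P$, using that $N_G(P)$ contains a Sylow $3$-subgroup so that $c,d$ may be conjugated into $N_G(P)$ and one can pass to the simple section of the Levi factor.

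A second genuine gap is in the outer (triality) case. By \cite[Tab.~8]{MDec} there are \emph{two} classes of graph automorphisms of order~$3$ of $\OO_8^+(3^a)$, and likewise two classes of outer automorphisms of order~$3$ for $\tw3D_4(3^a)$, up to conjugation and inversion: one with centralizer $G_2(q)$ and one with centralizer contained in a parabolic subgroup of $G_2(q)$. Restricting to the centralizer and invoking Example~\ref{exmp:D4b} produces the examples for the first class, but gives no argument excluding the second; the paper rules it out by observing that the second class contains the product of the graph automorphism with a long root element of its centralizer, and that products of two long root elements of $G_2(q)$ can have even order. Your sketch also leaves out the reduction for field and graph-field automorphisms of $3$-power order in characteristic~$3$ (done in the paper via \cite[Prop.~4.9.1]{GLS} and descent to subfield groups such as $\OO_8^+(3^a)\le\OO_8^+(3^{3a})$), and the final step handling $3$-elements of order larger than~$3$. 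Without these, the case division is not exhaustive, so even granting the cross-characteristic branch (where your structure-constant strategy is plausible, though the paper proceeds by descent to subfield groups and to the subgroups of Table~\ref{tab:exc3} rather than by character estimates for generic families), the proposed proof does not establish the stated classification.
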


\begin{proof}
Let $S=F^*(G)$. We consider the various possibilities for $S$ according to the
classification of finite simple groups.

\smallskip
Case 1. $S$ is not of Lie type.

For $S$ sporadic, a calculation of structure constants using the known
character tables shows that no example arises. For $S=\fA_n$, $n\ge5$, there
are no cases by \cite[Lemma~8.2]{GMT}.

\smallskip
Case 2. $S$ of Lie type in characteristic~$p=3$.

If both $c,d$ are contained in $S$, then by \cite[Thm.~4.6]{GMT} the only
examples are those in~(1) of the conclusion.
Now assume that $d$ induces a field or graph-field automorphism on $S$. If $S$
has rank~1, then $S=S(q)\in\{\PSL_2(q),\PSU_3(q),{}^2G_2(q^2)\}$.
By \cite[Prop.~4.9.1]{GLS} there is a unique class of cyclic subgroups of
such automorphisms of order~$3$, and every unipotent element of $S$ is
conjugate to one in $C_S(d)=S(q_0)$, where $q=q_0^3$. Again by
\cite[Prop.~4.9.1]{GLS}, $d$ is conjugate to a non-central element of
$S(q_0)\times\langle d\rangle$, so we reduce to the simple group $S(q_0)$ for
which we are done by induction, except when $S=\PSL_2(3^3)$ or $^2G_2(3^3)$.
In the latter cases, explicit computation shows that there are no examples.
\par
If $S$ has rank at least~2, let's exclude for the moment the case that $S$ is
of type $D_4$ and $c$ or $d$ induce a graph or graph-field automorphism. We
let $P$ be an end node parabolic subgroup with $d$ not contained in its
unipotent radical $Q$.
Then $N_G(P)$ contains a Sylow 3-subgroup of $G$, so we may assume that
$c,d\in N_G(P)$. Now $P/Q$ has a unique non-abelian simple section, on which
both $c,d$ act nontrivially. In this case we are done by induction unless
$P/Q$ is as in~(1), (2) or~(3) of our conclusion. Clearly (1) and (3) cannot
occur as proper Levi factors, and~(2) does not arise since $c,d$ can not induce
graph automorphisms of 3-power order on the Levi factor.
\par
A graph-field automorphism $d$ of $S=\OO_8^+(3^{3a})$ of order~3 normalizes
a subgroup $M=\OO_8^+(3^a)$ which contains representatives for all classes
of elements of order~3 in $S$, and on which it acts by a graph automorphism.
Since all subgroups of order~3 in $S.\langle d\rangle$ are conjugate under
$\Aut(S)$ by \cite[Prop.~4.9.1(e)]{GLS}, a conjugate of $d$ acts as the graph
automorphism of $M$ with parabolic centralizer, whence we do not get examples
by the previously discussed case.
\par
Next assume that $d$ induces a graph automorphism on $S\cong \OO_8^+(3^a)$.
By \cite[Tab.~8]{MDec} there are two such outer automorphisms of order~3 up
to conjugation and inversion, one with centralizer $G_2(q)$ and the other
with centralizer contained inside a parabolic subgroup of $G_2(q)$.
Explicit computation of structure constants shows that the only case for
$\OO_8^+(3)$ is with $c$ an inner 3-central element and $d$ inducing
a graph automorphism with centralizer $G_2(3)$. By Example~\ref{exmp:D4b}
this gives rise to the family of examples in~(2) for $\OO_8^+(3^a)$.
\par
Finally, let $S=\tw3D_4(3^a)$. Here outer automorphisms of order three
stabilize and act non-trivially on a parabolic subgroup $P$ with Levi subgroup
of type $A_1(q^3)$. All non-trivial unipotent classes of $S$ except for the
one of long root elements have representatives outside the unipotent radical
of $P$ (see \cite[Tab.~A.8, A.10]{Him}) and we are done by
induction. Now assume that $c$ is a long root element. Again by
\cite[Tab.~8]{MDec} and \cite[Prop.~5]{MGreen}, there are two classes of
outer automorphisms of order~3 up to inversion. The class whose elements
have centralizer of type $G_2$ leads to case~(3) by Example~\ref{exmp:D4b}.
The other class contains the product of the graph automorphism with a long
root element in its centralizer $G_2(q)$. But the product of two long root
elements in $G_2(q)$ can have even order, whence we do not get an example.
\par

\smallskip
Case 3. $S$ of Lie type in characteristic $p\ne3$.

Here, both $c,d$ are semisimple. In this case, we imitate the argument in the
proof of \cite[Thm.~8.4]{GMT} for the case $p\ge5$, and just comment on the
differences. First assume that $c,d$ both have order~3. If $c$ is inner and
$d$ induces a field or graph-field automorphism, then we may invoke
\cite[Lemma~8.6]{GMT} to descend to a group over a subfield, unless
$S=\tw3D_4(q)$.
We then continue as in \cite[8.2]{GMT} and see that for classical groups we
only need to worry about the case when $S=\OO_8^+(q)$ and $d$, say, is a graph
automorphism. Now $d$ normalizes a subgroup $\OO_8^+(2)$, which contains
representatives from all classes of inner elements of order~3 of $S$.
Computation of structure constants in $\OO_8^+(2).3$ shows that no example
arises. This completes the investigation of classical type groups.
\par
We next discuss exceptional type groups. For $S=\tw2B_2(q^2)$, the only
3-elements are field automorphisms. For $G=G_2(q)$ it can be checked from the
character tables in \cite{Chv} that not both $c,d$ can be inner, and then by
the above cited \cite[Lemma~8.6]{GMT} we reduce to a group over a subfield.
For $\tw3D_4(q)$ all classes of elements of order~3 have representatives in
the subgroup $G_2(q)$.
In $\tw2F_4(q^2)$ there is just one class of elements of order~3, so no example
can exist by the Baer--Suzuki theorem. For the groups of large rank, we use
induction by invoking Lemma~\ref{lem:sub3}.

\smallskip
Case 4. 3-elements of order larger than~$3$.

Clearly we only need to consider elements $c,d$ such that elements of
order~3 in $\langle c\rangle,\langle d\rangle$ are as in~(1)--(3). But all
possibilities for those cases have already been discussed.
\end{proof}

\begin{lem}   \label{lem:sub3}
 Let $G$ be an exceptional group of adjoint Lie type of rank at least~4 in
 characteristic prime to~3. Then all conjugacy classes of elements of order~3
 have (non-central) representatives in a natural subgroup $H$ as listed in
 Table~\ref{tab:exc3}, where $T$ denotes a 1-dimensional split torus.
\end{lem}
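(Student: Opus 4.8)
The plan is to prove the statement first at the level of the ambient simple algebraic group $\mathbf G$ of adjoint type, so that $G=\mathbf G^F$ for a suitable Steinberg endomorphism $F$, and then to descend to rational classes. Since the characteristic is prime to~$3$, every element~$s$ of order~$3$ is semisimple, hence lies in a maximal torus, and its connected centralizer $C_{\mathbf G}(s)^\circ$ is a reductive subgroup of maximal rank. By the theorem of Borel--de Siebenthal, such a subgroup is a subsystem subgroup obtained by deleting nodes from the extended Dynkin diagram of~$\mathbf G$, and $s$ is central in $C_{\mathbf G}(s)^\circ$. Only the groups $F_4$, $E_6$, $\tw2E_6$, $E_7$ and $E_8$ are exceptional of rank at least~$4$, so there are finitely many root systems to treat.

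Next I would enumerate the (finitely many) geometric classes. Inner semisimple classes of order dividing~$3$ are parametrized, up to the relevant Weyl-group and diagram-automorphism action (see \cite[Prop.~4.9.1]{GLS} and the theory of Kac coordinates), by the solutions in non-negative integers of $\sum_i a_i s_i = 3$ over the nodes of the affine diagram, where the $a_i$ are the marks. The deleted nodes (those with $s_i>0$) then read off the type of $C_{\mathbf G}(s)^\circ$. For each of the five groups this yields a short explicit list; the solutions are of two shapes: a single $s_i=1$ at a node of mark~$3$, giving a semisimple centralizer (for instance $A_2E_6$ or $A_8$ inside $E_8$), or coordinates supported on nodes of smaller mark, in which case a one-dimensional central torus survives in $C_{\mathbf G}(s)^\circ$. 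The latter shape is exactly what accounts for the factor~$T$ appearing in Table~\ref{tab:exc3}.

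With the list in hand I would match each class to the natural subgroup $H$ recorded in the table. Here $H$ is a subsystem subgroup of maximal rank, typically of classical type or a product of copies of $\SL_3$ together with the split torus~$T$, chosen so that $s$ acts as a genuine semisimple element of order~$3$ on at least one simple factor of~$H$. Since an element of order~$3$ can only be central in $H$ by being an order-$3$ scalar in some factor, and our choice of $H$ avoids this, the representative is non-central in~$H$; this is the content one checks class by class using the centralizer types computed above. The point of the lemma is that, for each group, the single subgroup $H$ (of a type already handled by induction in the proof of Theorem~\ref{thm:p=3}) simultaneously accommodates all order-$3$ classes.

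The main obstacle is the descent from geometric to rational classes. I must arrange $H$ to be $F$-stable, so that $H^F\le G$ is genuinely of the tabulated type, and verify that each $\mathbf G$-class of order-$3$ elements — which may split into several $G^F$-classes parametrized by $H^1(F, C_{\mathbf G}(s)/C_{\mathbf G}(s)^\circ)$ — has every rational constituent meeting $H^F$ non-centrally. This is a finite verification, and the heaviest case is $E_8$, where both the number of classes and the sizes of the centralizer component groups are largest; there I would confirm the matching directly from the explicit semisimple-class data (recorded in \cite{GLS} and reproducible in \Chevie and \GAP).
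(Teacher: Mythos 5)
Your overall strategy (classify order-$3$ elements geometrically via Borel--de Siebenthal/Kac coordinates, then descend to rational classes) is a legitimate framework, but it has a concrete gap that breaks exactly the hardest entries of Table~\ref{tab:exc3}. You assert that each subgroup $H$ in the table is ``a subsystem subgroup of maximal rank.'' That is false for the entries $H=F_4(q)$ inside $E_6(q)_\ad$ (for $q\equiv2\pmod3$) and inside $\tw2E_6(q)_\ad$ (for $q\equiv1\pmod3$): the root system $F_4$ is not a closed subsystem of the simply-laced system $E_6$ (it has rank $4<6$, and $F_4(q)$ arises as the fixed points of the graph automorphism, not from the extended Dynkin diagram). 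So Borel--de Siebenthal never produces these $H$, and your matching step has nothing to match against. Moreover, these entries are not an optional convenience you could swap for a maximal-rank subgroup: the congruence conditions on $q$ in the table exist precisely because, for the ``wrong'' congruence, the split Levi subgroup $A_5(q)T$ (resp.\ $\tw2A_5(q)T$) does \emph{not} meet all rational classes of order-$3$ elements --- the relevant elements lie in non-split maximal tori, e.g.\ of order dividing $\Phi_2^4$, which do embed in $F_4(q)$. This failure occurs inside the rational-descent step that your proposal defers as ``a finite verification''; that step is where the actual content of the lemma lives, and your geometric setup gives no mechanism for carrying it out for the $F_4(q)$ rows.

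For comparison, the paper argues entirely inside the finite groups: it computes the classes of order-$3$ elements and their centralizers with \Chevie, observes that for $F_4(q)$ a maximal torus of order $\Phi_1^4$ (if $q\equiv1\pmod3$) or $\Phi_2^4$ (if $q\equiv2\pmod3$) meets every class and lies in $B_4(q)$, and for $E_6(q)_\ad$ with $q\equiv1\pmod3$ puts all but two classes in a maximally split torus inside $A_5(q)T$, handling the remaining two classes by noting their centralizers $A_2(q^3).3$ and $\tw3D_4(q)\Phi_3$ contain maximal tori of orders $q^6-1$ and $(q^3-1)^2$ which have conjugates in $A_5(q)T$; the other cases are analogous. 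If you want to salvage your approach, you must (i) treat the $F_4(q)$ entries by a separate argument adapted to the graph-automorphism embedding $F_4\subset E_6$, and (ii) actually carry out the rational descent, tracking how each geometric class splits under $H^1(F,C_{\mathbf G}(s)/C_{\mathbf G}(s)^\circ)$ and verifying, congruence by congruence, which rational forms of the centralizers occur and which tori of $H^F$ they meet.
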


\begin{table}[htbp]
 \caption{Subgroups intersecting all classes of elements of order~3}
  \label{tab:exc3}
\[\begin{array}{|l||l|l|}
\hline
     G& H& \text{conditions} \cr
\hline
         F_4(q)& B_4(q)& \cr
     E_6(q)_\ad& A_5(q)T& q\equiv1\pmod3 \cr
               & F_4(q)& q\equiv2\pmod3 \cr
 \tw2E_6(q)_\ad& \tw2A_5(q)T& q\equiv2\pmod3 \cr
               & F_4(q)& q\equiv1\pmod3 \cr
     E_7(q)_\ad& D_6(q)T&  \cr
         E_8(q)& D_8(q)& \cr
\hline
\end{array}\]
\end{table}

\begin{proof}
First assume that $G=F_4(q)$.
The conjugacy classes of elements of order~3 in $G$ and their centralizers are
easily determined using \Chevie\ \cite{Chv}. From this it ensues that
a maximal torus of order $\Phi_1^4$ contains representatives
from all three classes of elements of order~3 when $q\equiv1\pmod3$, which in
turn is contained in a subgroup of type $B_4$, while for $q\equiv2\pmod3$,
the same holds for a maximal torus of order $\Phi_2^4$. In $E_6(q)_\ad$, for
$q\equiv1\pmod3$, all but two classes of elements of order~3 have
representatives in a maximal torus of order~$\Phi_1^6$, which lies inside a
Levi subgroup $A_5(q)T$. For the remaining two
classes, the centralizers $A_2(q^3).3$ and $\tw3D_4(q)\Phi_3$ contain maximal
tori of order $q^6-1$ respectively $(q^3-1)^2$, which also have conjugates in
$A_5(q)T$. The arguments for the remaining cases are completely similar.
\end{proof}

Now we can state the result that we need for the proof of our main
Theorem~\ref{thm:main}.

\begin{cor}   \label{cor:p=3}
 Let $G$ be a finite almost simple group with socle $F^*(G)=S$.  Let $p$ be
 an odd prime.  Let $x, y$ be elements of order $p$ in $G$.
 Then there exists $s \in S$ such that one of the following holds:
 \begin{enumerate}[\rm(1)]
  \item $\langle x, y^s \rangle$ is not a $p$-group; or
  \item  $p=3$ and $Z_3 \wr Z_3$ is a section of $\langle x, y^s \rangle$.
 \end{enumerate}
\end{cor}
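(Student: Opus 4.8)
The plan is to proceed by contraposition. Assume that no $s \in S$ satisfies either alternative, so that for every $s \in S$ the group $\langle x, y^s \rangle$ is a $p$-group which, when $p = 3$, has no section isomorphic to $Z_3 \wr Z_3$. Since $\langle x^{s_1}, y^{s_2} \rangle$ is conjugate by $s_1^{-1}$ to $\langle x, y^{s_2 s_1^{-1}} \rangle$, this says exactly that the $S$-classes of $x$ and $y$ form a Baer--Fischer pair of elements of order $p$; as replacing a $G$-class by one of its $S$-classes does not change which subgroups $\langle x, y^s \rangle$ occur, we may freely invoke the classification of such pairs. First I would treat $p \ge 5$: by \cite[Thm.~8.4]{GMT} no Baer--Fischer pair of $p$-elements exists in an almost simple group for $p > 3$, so alternative~(1) must in fact hold for some $s$, contrary to assumption. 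Hence $p = 3$.

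For $p = 3$ the hypothesis places $(x, y)$ into the list of Theorem~\ref{thm:p=3}: up to interchange, either $S = G_2(3^a)$ with $x$ a long and $y$ a short root element, or $F^*(G) \in \{\OO^+_8(3^a), \tw3D_4(3^a)\}$ with $x$ an inner $3$-central element and $y$ a graph automorphism of order~$3$ whose centralizer in $S$ is $G_2(3^a)$. It then suffices to produce, in each of these three configurations, some $s \in S$ for which $\langle x, y^s \rangle$ has a section isomorphic to $Z_3 \wr Z_3$; this contradicts the standing assumption and proves the corollary.

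The construction of this section is the crux. For the cases $F^*(G) = \OO^+_8(3^a)$ and $F^*(G) = \tw3D_4(3^a)$ I would use the explicit computation of Example~\ref{exmp:D4b}, where a pair $(x, y) \in C_1 \times C_2$ is exhibited with $xy$ in the top class $D_3$ and $\langle x, y \rangle$ of order $243 = 3^5$. Because $D_3$ is dense in $C_1 C_2$, such a pair is realized with $y$ replaced by a suitable conjugate $y^s$, $s \in S$, so the $S$-conjugation demanded by the corollary is no obstruction. It then remains to locate $Z_3 \wr Z_3$, of order $3^4$, as a section of this $2$-generator group of order $3^5$, which is a finite verification in a single small $3$-group, carried out using the structure-constant data of \cite[Tab.~8]{MDec}. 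The concluding remark of \cite[Ex.~3.4]{GM} identifies the same $3$-group with the one generated by suitable long and short root elements of $G_2(3)$, which settles case~(1) as well. The main obstacle is precisely this identification of the wreath section inside the order-$243$ group: everything else is either a citation or a formal reduction, whereas pinning down the section requires the concrete description, via the commutator relations among the root-type generators, of the unipotent subgroup that the two classes generate.
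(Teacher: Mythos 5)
Your endgame (the three $p=3$ configurations from Theorem~\ref{thm:p=3}, and the identification of a $Z_3\wr Z_3$ section inside the group of order $243$ from Example~\ref{exmp:D4b}, respectively inside the corresponding subgroup for $G_2(3)$) agrees with what the paper does. But your opening reduction contains a genuine gap, and it is exactly the point on which the paper spends most of its proof. The classification results you want to quote --- Theorem~\ref{thm:p=3} and \cite[Thm.~8.4]{GMT} --- have as hypothesis that $\langle c, d^g\rangle$ is a $p$-group for \emph{all} $g\in G$, i.e.\ they concern pairs of $G$-classes. Your hypothesis only controls the pairs from $x^S\times y^S$, and your bridging sentence (``replacing a $G$-class by one of its $S$-classes does not change which subgroups $\langle x,y^s\rangle$ occur'') is unjustified, and false in general: a $G$-class of $p$-elements can split into several $S$-classes, and a subgroup $\langle x, y^g\rangle$ with $g\in G$ need not be conjugate to any $\langle x, y^s\rangle$ with $s\in S$. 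So you cannot ``freely invoke the classification'' from the $S$-conjugation hypothesis alone.

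The paper bridges this as follows. Under the contrapositive assumption, $\langle x,y\rangle$ is a $p$-group, so $G/S=\langle \bar x,\bar y\rangle$ is a $p$-group; if its Sylow $p$-subgroup is cyclic, then $G=SC_G(x)$ or $G=SC_G(y)$, whence $G=C_G(x)SC_G(y)$, and then every $\langle x,y^g\rangle$, $g\in G$, is conjugate to some $\langle x,y^s\rangle$ with $s\in S$, so the $S$-hypothesis really does imply the $G$-hypothesis. By the classification of finite simple groups, the only remaining possibilities are $S=\PSL_n(q^p)$ or $\PSU_n(q^p)$ with $p$ dividing $(n,q-1)$, respectively $(n,q+1)$; there the factorization $G=C_G(x)SC_G(y)$ still holds unless $x$ and $y$ are both field automorphisms of order $p$. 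That last case needs a genuinely different argument, which your proposal lacks entirely: after conjugation, $x$ and $y$ both normalize, and induce field automorphisms on, a subgroup $H\cong\PSL_2(q^p)$; they are conjugate in $\Aut(H)$ by \cite[Prop.~4.9.1]{GLS}, so the classical Baer--Suzuki theorem produces $s\in H\le S$ with $\langle x,y^s\rangle$ not a $p$-group. Without this analysis (or some substitute for it), your reduction to the classification theorems does not go through.
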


\begin{proof}
We may suppose that $G=\langle S, x, y \rangle$. First assume that $p > 3$.
The result follows by \cite[Thm.~8.4]{GMT} except that there $s$ is taken in
$G$ rather than in $S$. If the Sylow $p$-subgroup of $G/S$ is cyclic, then
since $G=SC_G(x)$ or $G=SC_G(y)$, we can take $s \in S$. By the classification,
the only other possibilities are that $S=\PSL_n(q^p)$
or $S=\PSU_n(q^p)$ where $p$ divides $(n, q-1)$ or $(n, q+1)$, respectively.
Note that $G=C_G(x)SC_G(y)$ (and so the result follows) unless $x$ and $y$
are both field automorphisms of order $p$. In this case, after conjugation,
$x$ and $y$ both normalize and do not centralize a subgroup $H$
isomorphic to $\PSL_2(q^p)\cong\PSU_2(q^p)$ and each induce field
automorphisms. By \cite[Prop.~4.9.1]{GLS}, $x$ and $y$ are conjugate in
$\Aut(H)$, whence the result follows by the Baer--Suzuki theorem.

Now assume that $p=3$. Exclude the cases $S=G_2(3^a)$, $S=\OO_8^+(3^a)$
and $S=\tw3D_4(3^a)$ for the moment. Then arguing exactly as for $p > 3$
and using Theorem~\ref{thm:p=3} in place of \cite[Thm.~8.4]{GMT}, we see that
$\langle x, y^s \rangle$ is not a $3$-group for some $s \in S$.

If $S=G_2(3^a)$, then it follows by the earlier results of this section
that either $\langle x, y^s \rangle$ is not a $3$-group for some
$s \in S$ or (up to order), $x$ is a long root element and $y$ is a short
root element. In that case, we see in $G_2(3)$ that $x,y^s$ generate a
subgroup of order $3^5$ (of index~3 in a Sylow $3$-subgroup of $G_2(3)$)
when $xy^s$ has centralizer order $3^3$, and one checks that $Z_3 \wr Z_3$ is
a quotient of that subgroup of $G_2(3)$.

If $S=\OO_8^+(3^a)$, then Theorem~\ref{thm:p=3} shows that
either $\langle x, y^s \rangle$ is not a $3$-group for some $s$ or
(up to order), $x$ is a graph automorphism and $y$ is a $3$-central
element of $S$. Again, explicit computation shows that two conjugates
in $\OO_8^+(3^a).3$ can generate a subgroup of order $3^5$ (see
Example~\ref{exmp:D4b}) which has $Z_3 \wr Z_3$ as a quotient.
This shows the claim also for $S=\tw3D_4(3^a)$.
\end{proof}

\section{Pairs of conjugacy classes of involutions}   \label{sec:p=2}

In this section we classify Baer--Fischer pairs of involution classes in finite
non-abelian almost simple groups. Note that two involutions generate a 2-group
if and only if their product has 2-power order. Before proving the
classification of such pairs, we first give some examples.

\subsection{Baer--Fischer pairs coming from algebraic groups}
Several families of Baer--Fischer pairs are obtained by Galois descent from
corresponding configurations in almost simple algebraic groups.

The Baer--Fischer pairs consisting of unipotent classes in connected groups of
Lie type in characteristic $2$ were classified in \cite[Thm.~4.6]{GMT}.
We next discuss further examples in characteristic~2 coming from configurations
in disconnected algebraic groups as studied in \cite{GM}.

\begin{exmp}   \label{ex:graph-transb}
We continue \cite[Ex.~3.2]{GM} with $C_1$ the conjugacy class of
transvections of $\SL_{2n}(k)$, $n\ge2$, where $k$ is
algebraically closed of characteristic~2, and $C_2$ the class of graph
automorphisms with centralizer $\Sp_{2n}(k)$. Both classes are stable under
the standard Frobenius endomorphism, as well as under unitary Steinberg
endomorphisms of $\SL_{2n}(k)$. Thus we obtain Baer--Fischer pairs of
involution classes both in $\SL_{2n}(q).2$ and $\SU_{2n}(q).2$, where $n\ge2$
and $q=2^a$.
\end{exmp}

\begin{exmp}   \label{ex:ortho p=2b}
We continue \cite[Ex.~3.3]{GM} for the general orthogonal group $G=\GO_{2n}(k)$,
with $n\ge3$ and $k$ algebraically closed of characteristic~2. Let $V$ denote
the natural module for $G$ with invariant symmetric from $(\cdot,\cdot)$. Let
$C_1$ be the class of an involution $x$ with $(xv,v)=0$ for all $v\in V$,
and $C_2$ a class of transvections in $G$. Taking fixed points
under suitable Steinberg endomorphisms we obtain Baer--Fischer pairs for
$\GO_{2n}^{\pm}(2^a)$.
\end{exmp}

\begin{exmp}   \label{exmp:E6b}
We continue \cite[Ex.~3.5]{GM} with $G=E_6(k).2$ the extension of a simple
group $G^\circ=E_6(k)$ of simply connected type $E_6$ by a graph automorphism
of order~2, over
an algebraically closed field $k$ of characteristic~2. Let $C_1$ be the class
of long root elements in $G^\circ$, with centralizer of type $A_5$, and $C_2$
the class of the graph automorphism $\sg$ with centralizer $F_4(k)$ in
$G^\circ$. Here, $C_1C_2$ only contains unipotent elements. \par
Let $D_i$, $i=1,2$, denote the class of the outer unipotent element $u_i$ in
the notation of \cite[Tab.~10]{MGreen}, of order~2 and ~4 respectively.
Representatives of $C_1,C_2$ are also contained in the finite group
$\tw2E_6(2).2$ (noting that by \cite[Prop.~5]{MGreen} the outer unipotent
classes of $E_6(2^a).2$ and $\tw2E_6(2^a).2$ are parametrized in precisely
the same way). An explicit computation of structure constants for the finite
subgroup $G_0=\tw2E_6(2).2$ shows that $C_1C_2\cap G_0$ hits every element of
$D_1\cap G_0$ once, and every element of $D_2\cap G_0$ twice, and no others.
We thus get Baer--Fischer pairs for all the groups $E_6(2^a).2$ and
$\tw2E_6(2^a).2$.
\par
The fact that $G=\tw2E_6(2).2$ is an example can also be seen as follows:
The 2-fold covering of $G$ embeds into the Baby monster $B$ such that the two
above-mentioned involution classes fuse into the class of
$\{3,4\}$-transpositions. The claim follows, as clearly the product of an
inner with an outer element of $G$ has even order. \par
Both classes intersect the maximal subgroup $Fi_{22}.2$ non-trivially, so this
also yields a Baer--Fischer pair for that group.
\end{exmp}

The next two families of examples originate from disconnected algebraic groups
in odd characteristic, analogues of the characteristic~2
examples~\ref{ex:graph-transb} and~\ref{ex:ortho p=2b}.

\begin{exmp}   \label{exmp:graph-reflq}
We consider finite analogues of \cite[Ex.~4.1]{GM}. Let $k$ be algebraically
closed of odd characteristic and $G$ be the extension of $\GL_{2n}(k)$,
$n\ge2$, by a graph automorphism $y$ with centralizer $\Sp_{2n}(k)$.
Let $C_1$ be the class of an involution (in $G/Z([G,G])$) that is (up to
scalar) a pseudoreflection, $C_2$ the class of $y$. Taking fixed points under
a Steinberg endomorphism $F$ of $G$ we get Baer--Fischer pairs in $G^F$ of
type $\GL_n(q).2$ and $\GU_n(q).2$.
\end{exmp}

\begin{exmp}   \label{exmp:GOq}
We consider finite analogues of \cite[Ex.~4.2]{GM}. Let $k$ be algebraically
closed of characteristic not $2$, $G=\GO_{2n}(k)$, $n\ge4$, and $C_1$
containing elements with centralizer $\GL_n(k)$, $C_2$ containing reflections
in $G$. Let $F:G\rightarrow G$ be a Steinberg endomorphism. The stabilizer
$H\cong\GL_n(k)$ of a maximal isotropic subspace acts transitively on
non-degenerate 1-spaces of the natural module for $G$, with stabilizer a
maximal parabolic subgroup (which is connected). So if $H$ is chosen
$F$-stable, then $H^F$ acts transitively on $F$-stable non-degenerate 1-spaces,
whence we have a decomposition $G^F=G_v^F H^F$, for any $F$-stable
non-degenerate 1-space $v$. If $n$ is even, then this shows that for all odd
$q$ we get Baer--Fischer pairs in $\GO_{2n}^+(q)$ for classes with
centralizers $\GO_{2n-1}(q)$, together with $\GL_n(q)$ or $\GU_n(q)$, while
for odd $n$ we get such pairs in $\GO_{2n}^\pm(q)$ with centralizers
$\GO_{2n-1}(q)$, together with $\GL_n(q)$ in $\GO_{2n}^+(q)$,
respectively $\GU_n(q)$ in $\GO_{2n}^-(q)$.
\end{exmp}

Let's observe the following:

\begin{lem}   \label{lem:graph}
 Let $G$ be a finite group. Suppose that $C_1,C_2\subset G$ are conjugacy
 classes such that $x_1x_2$ has 2-power order for all $x_i\in C_i$.
 Let $\sigma$ be an automorphism of $G$ of order~2 interchanging $C_1$ and
 $C_2$, and set $\hat G$ the semidirect product of $G$ with $\sigma$. Then
 $C_1\cup C_2$, $[\sigma]$ is a Baer--Fischer pair in $\hat G$.
\end{lem}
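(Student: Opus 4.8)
The plan is to reduce the assertion to the dihedral criterion recalled at the start of the section---two involutions generate a $2$-group if and only if their product has $2$-power order---and then to feed in the hypothesis on $C_1\times C_2$ through the identity $(y\sigma)^2 = y\,\sigma(y)$. So the whole lemma should come out as an essentially formal manipulation, with no appeal to the classification.

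First I would check that $(C_1\cup C_2,[\sigma])$ really is a pair of involution classes of $\hat G$. Every conjugate of $\sigma$ is an involution, so $[\sigma]$ is such a class. For $C_1\cup C_2$, note that $G$ fixes each of $C_1,C_2$ setwise, while conjugation by $\sigma$ induces the automorphism $\sigma$ and hence interchanges them; since $\hat G=G\cup G\sigma$, the set $C_1\cup C_2$ is a single $\hat G$-conjugacy class. Here I use that $C_1,C_2$ consist of involutions, as is forced by the target being a Baer--Fischer pair at $p=2$ (and as holds in all the applications).

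Next I would reduce to a fixed outer involution. Given $x\in C_1\cup C_2$ and $\tau=\sigma^g\in[\sigma]$, conjugating by $g^{-1}$ (with the convention $a^h=h^{-1}ah$) turns $\langle x,\tau\rangle$ into $\langle x^{g^{-1}},\sigma\rangle$, and $x^{g^{-1}}$ again lies in the $\hat G$-class $C_1\cup C_2$; since being a $2$-group is conjugation-invariant, it suffices to prove $\langle y,\sigma\rangle$ is a $2$-group for every $y\in C_1\cup C_2$. Now both $y$ and $\sigma$ are involutions, so $\langle y,\sigma\rangle$ is dihedral and is a $2$-group exactly when $y\sigma$ has $2$-power order. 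The key computation is
\[
 (y\sigma)^2 = y\,(\sigma y\sigma^{-1})\,\sigma^2 = y\,\sigma(y),
\]
using $\sigma^2=1$. Since $\sigma(y)$ lies in the class swapped with that of $y$, the pair $\{y,\sigma(y)\}$ has one member in $C_1$ and one in $C_2$, so $y\,\sigma(y)$ (or its inverse $\sigma(y)\,y$, of equal order) is a product from $C_1\times C_2$ and therefore has $2$-power order by hypothesis. Hence $(y\sigma)^2$, and so $y\sigma$, has $2$-power order, and $\langle y,\sigma\rangle$ is a $2$-group, as required.

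I do not anticipate a serious obstacle; the only points needing care are bookkeeping. One must confirm that $C_1\cup C_2$ is genuinely a single $\hat G$-class, so that the reduction to a fixed $\sigma$ is legitimate, and one must match the element $y\,\sigma(y)$ produced by the squaring identity back to the $C_1\times C_2$ hypothesis---which works precisely because for involutions $ab$ and $ba$ are inverse and hence of equal order, so the order of the generators may be swapped freely. Once these routine verifications are in place, the dihedral criterion delivers the conclusion at once.
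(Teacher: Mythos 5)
Your proof is correct and takes essentially the same route as the paper: the paper's one-line argument is exactly your key identity, $(x\sigma)^2=x\,x^\sigma\in C_1C_2$, combined with the dihedral criterion stated at the start of the section. The additional bookkeeping you carry out (that $C_1\cup C_2$ is a single $\hat G$-class, the conjugation reduction to a fixed $\sigma$, and the remark that $ab$ and $ba$ have equal order) is left implicit in the paper but amounts to the same reduction.
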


\begin{proof}
Let $x\in C_1$, $y=\sigma$. Then $(xy)^2=xyxy=x\,x^\sigma
\in C_1C_2$ has 2-power order by assumption.
\end{proof}

This gives rise to two more families of examples.

\begin{exmp}   \label{exmp:b2f4}
Let $G=H.2$ where $H$ is either $F_4(2^{2m+1})$ or $\Sp_4(2^{2m+1})$,
the extension by the exceptional graph automorphisms of order~2.
Let $C_1\subset G\setminus H$ be a conjugacy class of outer involutions and
$C_2\subset H$ the $G$-conjugacy class of root elements of $H$ (note that
short and long root elements are fused in $G$). Let $x_1 \in C_1$ and let
$x_2$ be a short root element.
Then $x_2^{x_1}$ is a long root element.  By \cite[Ex.~6.3]{GMT},
$\langle x_2, x_2^{x_1}\rangle$ is $2$-group, whence $\langle x_1, x_2\rangle$
is.
\end{exmp}

\subsection{Baer--Fischer involution pairs in characteristic~3}
There exist further families of examples for groups of Lie type over the field
with three elements:

\begin{exmp}   \label{exmp:SLn(3)}
Let $G=\SL_{2n}(3).2$ (extension with a diagonal automorphism) with $n\ge 2$.
Let $c$ be an element with all eigenvalues $\pm i$ and let $d$ be a reflection.
We claim that $J:=\langle c,d\rangle$ is a 2-group. Indeed, let $v$ be an
eigenvector for the nontrivial eigenvalue of $d$ and consider the subspace
$W:=\langle v,cv \rangle$ of the natural module $V$ for $G$.
Since $c$ acts quadratically this space is invariant under $c$.  Any subspace
containing $v$ is $d$-invariant and so this space is $J$-invariant, and $J$
acts by a 2-group on it. Note that $J$ acts as a cyclic group of order $4$
on $V/W$. It suffices to show that $X=O_3(J) =1$. Suppose not and choose a
complement $W'$ to $W$ that is $c$-invariant. We can write
$$d: = \begin{pmatrix}  r  &   s   \\    0 & I_{n-2}\\   \end{pmatrix},$$
where $r$ is upper triangular and $s$ is a $2 \times(n-2)$ matrix. Since $d$
is a reflection, it follows that $s$ has only nonzero entries in the first row.
It follows that $[X,V]$ is $1$-dimensional. However, $c$ leaves invariant no
$1$-dimensional space, a contradiction. \par
The same construction applies to $\SU_{2n}(3).2$, $n\ge2$, with $c$ an element
of order~8 with minimal polynomial of degree~2 and $d$ a reflection.
\end{exmp}

\begin{exmp}   \label{exmp:O2n(3)}
Let $G=\CO_{2n}^\pm(3)$, a conformal orthogonal group in even dimension.
If $c,d\in\GL_{2n}(3)\cap G$ are as in the previous Example~\ref{exmp:SLn(3)},
they clearly also provide an example.

If $c$ preserves the orthogonal form, then in the algebraic group,
$c$  has a centralizer isomorphic to $\GL_n$ and so the centralizer
in the finite group is  $\GU_n(3)$ and so $G=\CO^+_{2n}(3)$ if $n$
is even and $G=\CO^-_{2n}(3)$ if $n$ is odd.

If $c$ does not preserve the orthogonal form, then in the algebraic
group, the eigenspaces for $c$ are nondegenerate spaces, whence
the centralizer is the normalizer of $\SO_n \times \SO_n$ and so in
the finite group, it will be $\SO^{\pm}_n(9)$.
\end{exmp}

\begin{exmp}   \label{exmp:O2n(3)ex2}
Let $G=\CO_{2n}^+(3)$, a conformal orthogonal group in even dimension.
Assume that $d$ is a reflection and $c$ has eigenspaces which are maximally
isotropic. In particular, the centralizer of $c$ is $\GL_n(3)$ and $c$ does
not preserve the form.

Let $v$ be a nonzero vector with $dv = - v$ and consider the subspace $W$
spanned by $v$ and $cv$.  Note that $W$ is $2$-dimensional since $v$ is not
an eigenvector for $c$. If $W$ is nondegenerate, clearly $\langle c,d \rangle$
is a $2$-group.

We claim that this is the case. For if not, we can choose an eigenvector $w$
for $c$ in $W$ that is not in the 1-dimensional radical. Then $w$ is
nonsingular  (for otherwise $W$ is totally singular which of course is not
the case); but all eigenvectors of $c$ are totally singular.
\end{exmp}

\begin{exmp}  \label{exmp:O2n(3)ex3}
Let $G=\CO_{2n}^{\pm}(3)$, a conformal orthogonal group with $n$ even.
Let $d$ be a bireflection. Let $Y$ be the $-1$ eigenspace of $d$. Rather than
consider the centralizer type of $d$, we consider the type of $Y$ (which
determines the centralizer of $d$ given the type of the entire space).

(i) Suppose that $c$ has centralizer $\GL_n(3)$. Thus, $c$ has eigenvalues
$\pm 1$ and the eigen\-spaces are totally singular (and $c$ does not preserve
the form). Let $V_1$ and $V_2$ be the eigenspaces for $c$. Suppose $v_i$,
$i =1,2$, are basis vectors for the $-1$ eigenspace of $d$. Write
$v_i = w_{1i} + w_{2i}$ where $w_{ji} \in V_j$. Note that $w_{1i}$ and $w_{2i}$
span a $2$-dimensional nondegenerate space of $+$ type.  Let $X$ be the span
of the $w_{ji}$.  If the span is $2$-dimensional, it is nondegenerate and
clearly $cd$ is a $2$-element.  If $\dim X =3$, then $X$ has a $1$-dimensional
radical and $c$ and $d$ have a common eigenvector. Choose another eigenvector
for $c$ that is not perpendicular to the radical of $X$ and this together with
$X$ span a $4$-dimensional nondegenerate $\langle c,d \rangle$-invariant space
(necessarily of $+$ type since $c$ acts on it).  Thus, we are reduced to the
case of $\CO_4^+(3)$.  So we see in this case that $c^Gd^G$ consists of
$2$-elements if and only if the $-1$ eigenspace has $-$ type, so if $d$ has
centralizer $\GO_{2n-2}^-(3)$.

(ii) Suppose that $c$ has centralizer $\GU_n(3)$. So $c$ has eigenvalues
$\pm i$ and the eigenspaces (over the algebraic closure) are totally singular.
Let $X$ be the subspace generated by $Y, cY$. Since $c$ is quadratic, we
see that $\dim X \le 4$. If $\dim X=2$, then clearly $cd=dc$ is a $2$-element.
Since $X$ is $c$-invariant,  $\dim X$ is even. So suppose that $\dim X = 4$.
Clearly, $X$ is not totally singular. If $X$ has a radical $R$, it would be
$2$-dimensional and $c$-invariant. Choose a totally singular $2$-dimensional
space $R'$ that is $c$-invariant with $R + R'$ nondegenerate. Then
$X + R'$ is a $6$-dimensional nondegenerate $\langle c, d \rangle$-invariant
space and we can apply the results for $\PSL_4$ and $\PSU_4$.
Finally, suppose that $X$ is nondegenerate.   The only $c$-invariant
$4$-dimensional nondegenerate spaces are of $+$ type. One easily computes that
$c^Gd^G$ consists of $2$-elements if and only if either $n$ is even and $d$ has
centralizer $\GO_{2n-2}^+(3)$, or $n$ is odd and $d$ has centralizer
$\GO_{2n-2}^-(3)$.
\end{exmp}

\begin{lem}   \label{lem:On(3)}
 Let $x, y$ be non-conjugate reflections in $G=\GO^{(\pm)}_n(q)$, $n \ge 3$,
 with $q$ odd. Then $\langle x,y^g\rangle$ is a $2$-group for all $g\in G$
 if and only if $q=3$.
\end{lem}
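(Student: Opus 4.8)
The plan is to reduce everything to the action on the plane spanned by the two reflecting vectors. Write $x=r_v$ and $y=r_w$ for nonsingular vectors $v,w$, where $r_v$ negates $v$ and fixes $v^\perp$; by Witt's theorem two reflections are $\GO_n^{(\pm)}(q)$-conjugate iff the norms of their $(-1)$-eigenvectors lie in the same square class, so the non-conjugacy hypothesis says $(v,v)(w,w)$ is a nonsquare, and the conjugates $y^g$ are precisely the $r_u$ with $(u,u)$ in the square class of $(w,w)$. I fix $v$ with $(v,v)=a$ and let $u$ run through vectors with $(u,u)=b$, where $ab$ is a nonsquare; since $ab$ is a nonsquare while $(v,u)^2$ is a square, the Gram determinant $ab-(v,u)^2$ never vanishes, so the plane $P=\langle v,u\rangle$ is always nondegenerate. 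Then $r_vr_u$ fixes $P^\perp$ pointwise and acts on $P$ as a rotation of trace $\tau:=4(v,u)^2/(ab)-2$, with eigenvalues $\zeta^{\pm1}$ satisfying $\zeta+\zeta^{-1}=\tau$; hence $\langle x,y^g\rangle$ is dihedral of order $2\,\mathrm{ord}(\zeta)$ and is a $2$-group iff $\mathrm{ord}(\zeta)$ is a power of $2$. Here $\mathrm{ord}(\zeta)$ divides $q-1$ or $q+1$ according as $P$ has $+$ or $-$ type.

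I would next record which rotations occur. Writing $u=(c/a)v+u'$ with $u'\in v^\perp$, the condition $(u,u)=b$ forces $(u',u')=b-c^2/a$, which is nonzero (again because $c^2\ne ab$); as $v^\perp$ is nondegenerate of dimension $n-1\ge2$ it represents every nonzero value, so every $c\in\FF_q$ is realized. Thus $\tau+2=4c^2/(ab)$ runs through $\{0\}$ together with all nonsquares of $\FF_q$. For $q=3$ this already settles the ``if'' direction: here $q\mp1\in\{2,4\}$, so $\mathrm{ord}(\zeta)$ is always a power of $2$ and every $\langle x,y^g\rangle$ is a $2$-group. For $q>3$ at least one of $q-1,q+1$ has an odd prime factor (two powers of $2$ differing by $2$ are $2$ and $4$, forcing $q=3$), and I produce a single bad $u$ accordingly. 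If $q-1$ is not a $2$-power I take $\zeta$ to be a generator of $\FF_q^\times$: it is a nonsquare, so $\tau+2=(\zeta+1)^2/\zeta$ is a nonsquare and hence an achievable value, while $P$ has $+$ type and $\mathrm{ord}(\zeta)=q-1$. If instead $q-1$ is a $2$-power then $q+1$ is not, and I take $\zeta=\omega^{q-1}$ for a generator $\omega$ of $\FF_{q^2}^\times$; this generates the norm-$1$ subgroup, has order $q+1$, and by Hilbert 90 one has $\zeta+1=\mathrm{Tr}(\omega)/\omega$, so $\zeta+1$ lies in the nonsquare class of $\omega$, whence $\tau+2=N(\zeta+1)$ is a nonsquare (using that $N(\cdot)$ preserves square classes between $\FF_{q^2}$ and $\FF_q$) and again achievable, now with $P$ of $-$ type. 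In either case the corresponding $u$ gives $\langle x,y^g\rangle$ dihedral of non-$2$-power order.

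The one genuinely delicate point is the bookkeeping of which rotations arise from a \emph{non-conjugate} pair of reflections. The naive guess that these are the ``odd-position'' reflections of the ambient dihedral group $\OO_2^{\pm}(q)$ is false: the element $\zeta=-1$ (that is, $u\perp v$) occurs for pairs of \emph{either} conjugacy type, so the parity of the dihedral exponent does not track the square class. This is why I route everything through the explicit trace identity $\tau+2=4(v,u)^2/((v,v)(u,u))$ and, in the elliptic case, through $N(\zeta+1)=\tau+2$ together with the Hilbert 90 description of $\zeta+1$; these pin down the square class of $\tau+2$ unambiguously. The remaining verifications---nondegeneracy of $P$, the trace formula, and the surjectivity of $c\mapsto u$---are routine.
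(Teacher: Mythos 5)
Your proof is correct, and it takes a genuinely different route from the paper's. The paper observes that $x$ and $y^g$ act trivially on a common subspace of codimension~$2$, argues that this subspace cannot be totally singular (by induction for $n\ge5$, and for $n=4$ because the pointwise stabilizer of a totally singular $2$-space lies in the unipotent radical of a parabolic subgroup and hence contains no involutions), and thereby reduces to $\GO_3(q)\cong\{\pm1\}\times\PGL_2(q)$; there it quotes the structure of the torus normalizers in $\PGL_2(q)$: these are dihedral of order $2(q\mp1)$, and every element of the cyclic subgroups of orders $q\pm1$ is a product of an inner by an outer involution, so all $\langle x,y^g\rangle$ are $2$-groups if and only if both $q-1$ and $q+1$ are powers of~$2$, i.e.\ $q=3$. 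You instead work directly in the plane $P=\langle v,u\rangle$ spanned by the reflecting vectors, where the non-conjugacy hypothesis makes nondegeneracy automatic (the Gram determinant $ab-(v,u)^2$ is a nonsquare minus a square, hence nonzero); this neatly bypasses the paper's totally-singular case analysis, is uniform in $n\ge3$, and reduces to dimension~$2$ rather than~$3$. You then determine exactly which rotations occur via the trace identity $\tau+2=4(v,u)^2/(ab)$, the universality of nondegenerate forms in dimension $\ge2$ (so the achievable values of $\tau+2$ are precisely $\{0\}$ together with the nonsquares), and, in the elliptic case, Hilbert~90 together with the fact that the norm map preserves square classes between $\FF_{q^2}$ and $\FF_q$. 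Both proofs terminate in the same arithmetic fact, that $q-1$ and $q+1$ are simultaneously $2$-powers only for $q=3$; the paper's argument is shorter because it can cite known $\PGL_2(q)$ structure, while yours is more self-contained and produces explicit witnesses for the failure when $q>3$ (a split rotation of order $q-1$, or a nonsplit one of order $q+1$, according to which of $q\mp1$ has an odd prime factor).
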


\begin{proof}
Clearly, $x, y$ are trivial on a common subspace of codimension 2.
If $n \ge 5$, this space cannot be totally singular and so by induction
we can pass to the orthogonal complement of this common space and so assume
that $n \le 4$.

Even for $n=4$, this space cannot be totally singular (because the subgroup
preserving a totally singular 2-space in a 4-space and trivial on the 2-space
is contained in the radical of some parabolic subgroup and so contains no
involutions).

So we see that it suffices to prove the result for $\GO_3(q)\cong\PGL_2(q)$;
one of the involutions is inner and the other outer. The normalizer of the
split and of the nonsplit torus are dihedral groups of order divisible by~4,
thus any element in their maximal cyclic subgroups of order $q\pm1$ is a
product of an inner by an outer involution.
Thus, $\langle x,y^g\rangle$ is always a 2-group if and only
if both $q+1$ and $q-1$ are powers of 2. The result follows.
\end{proof}

\begin{exmp}   \label{exmp:O(9)}
Let $G=\SO_{2n+1}(q).\langle\gamma\rangle$, where $q$ is an even power of an
odd prime and $\gamma$ the corresponding field automorphism of
$H=\SO_{2n+1}(q)$ of order~2. We claim that the class of reflections in $H$
with centralizer $\GO_{2n}^-(q)$ together with the class of $\gamma$ form a
Baer--Fischer pair when $q=9$. For this, let $V$ denote the natural
$2n+1$-dimensional
module for $H$ with invariant symmetric form $(\ ,\ )$. Note that $\gamma$
also acts naturally as a semilinear map on $V$. Let $x\in H$ be a reflection
and $v\in V$ an eigenvector for the non-trivial eigenvalue of $x$. If $\gamma$
stabilizes the 1-space generated by $v$, then it commutes with $x$ and thus
their product has order~2 as desired. Else, the 2-dimensional space
$W:=\langle v,\gamma(v)\rangle$ is invariant under
$\langle x,\gamma(x)\rangle$, and representing matrices are given by
$$x=\begin{pmatrix} -1&-2a\\ 0& 1\end{pmatrix},\quad
  \gamma(x)=\begin{pmatrix} 1&0\\ -2\gamma(a)& -1\end{pmatrix},$$
where $\sigma(v)=av+u$ for some $u\in\langle v\rangle^\perp$. Let $b=(v,v)$.
As $ab=(v,\gamma(v))=\gamma(\gamma(v),v)=\gamma(ab)$ we have that $ab$ lies
in the quadratic subfield. If $x$ has centralizer of minus type, then $b$ is
a non-square, so the same holds for $a$. But in this case, the two matrices
given above are seen to generate a group of order~8 when $q=9$. Now the Gram
matrix of $(\ ,\ )$ on $W$ is given by
$$\begin{pmatrix} b&ab\\ ab& \gamma(b)  \end{pmatrix},$$
so $W$ is non-degenerate. As $x,\gamma(x)$ act trivially on $W^\perp$,
the claim follows.
\end{exmp}

\subsection{The classification of Baer--Fischer involution pairs}

\begin{thm}   \label{thm:p=2}
 Let $G$ be a finite almost simple group. Suppose that $c, d \in G$ are
 involutions such that $\langle c, d^g \rangle$ is a $2$-group for all
 $g \in G$.  Then one of the following holds (up to order):
 \begin{enumerate}[\rm(1)]
  \item $G$ is a finite group of Lie type in characteristic $2$, and $c$, $d$
   are unipotent elements as in \cite[Thm.~4.6]{GMT}; more specifically
  \begin{enumerate}[\rm(a)]
   \item $G=\Sp_{2n}(2^a)$, $n\ge2$, with $c,d$ as in \cite[Thm.~4.6(1)]{GMT};
   \item $G=F_4(2^a)$, with $c,d$ as in \cite[Thm.~4.6(2)]{GMT};
  \end{enumerate}
  \item $F^*(G)$ is a finite group of Lie type in characteristic $2$,
   $c$ is unipotent and $d$ a graph automorphism as in \cite[Thm.~3.7(2)]{GM};
   more specifically
  \begin{enumerate}[\rm(a)]
   \item $F^*(G)=\PSL_{2n}(2^a)$ or $\PSU_{2n}(2^a)$, $n\ge2$, with $c,d$ as in
    \cite[Thm.~3.7(2)(c)]{GM};
   \item $F^*(G)=\OO_{2n}^{\pm}(2^a)$, $n\ge3$, with $c,d$ as in
    \cite[Thm.~3.7(2)(d)]{GM};
   \item $F^*(G)=E_6(2^a)$ or $\tw2E_6(2^a)$, with $c,d$ as in
    \cite[Thm.~3.7(2)(e)]{GM};
  \end{enumerate}
  \item $F^*(G)=\Sp_4(2^{2m+1})'$ or $F_4(2^{2m+1})$, $m\ge0$, $c$ is a long
   root element and $d$ is a graph automorphism;
  \item $G$ is a disconnected finite group of Lie type in odd characteristic,
   and $c$ and $d$ are as in \cite[Thm.~4.5]{GM}; more specifically
  \begin{enumerate}[\rm(a)]
   \item $F^*(G)=\PSL_{2n}(q)$ or $\PSU_{2n}(q)$, $n\ge2$, with $q$ odd,
    $c$ is a pseudo-reflection (modulo scalars) and $d$ a graph automorphism
    with centralizer $\PSp_{2n}(q)$;
   \item $F^*(G)=\OO_{2n}^+(q)$, $n\ge4$ even, with $q$ odd, where $c$ has
    centralizer $\GL_n^\pm(q)$ and $d$ is a graph automorphism
    with centralizer $\OO_{2n-1}(q)$;
   \item $F^*(G)=\OO_{2n}^{\pm}(q)$, $n\ge5$ odd, with $q$ odd, where $c$ is an
    involution with centralizer $\GL_n^\pm(q)$ and $d$ is a graph automorphism
    with centralizer $\OO_{2n-1}(q)$;
  \end{enumerate}
  \item $F^*(G)$ is a finite group of Lie type in characteristic~3, more
   specifically
  \begin{enumerate}[\rm(a)]
   \item $F^*(G)=\PSL_{2n}(3)$ or $\PSU_{2n}(3)$, $n\ge2$ and $c$ lifts to an
   element of $\SL_{2n}(3)$ with eigenvalues $\pm i$ and $d$ is a reflection;
   \item $F^*(G)=\OO_n^{\pm}(3)$ with $n\ge6$, where $c$ and $d$ are
    non-conjugate reflections;
   \item $F^*(G)=\OO_{2n}^\pm(3)$, $n\ge4$, where $c$ is a reflection and $d$
    has centralizer $\OO_n^{(\pm)}(9)$;
   \item $F^*(G)=\OO_{2n}^+(3)$, $n\ge4$ even, where $c$ has centralizer
    $\OO_{2n-2}^\pm(3)$ and $d$ has centralizer $\GL_n^\mp(3)$;
   \item $F^*(G)=\OO_{2n}^\pm(3)$, $n\ge5$ odd, where $c$ has centralizer
    $\OO_{2n-2}^-(3)$ and $d$ has centralizer $\GL_n^\pm(3)$;
   \item $F^*(G)=\OO_{2n+1}(9)$, $n\ge2$, $c$ is a reflection with centralizer
    $\OO_{2n}^-(9)$ and $d$ is a field automorphism;
  \end{enumerate}
  \item $G=\fS_{2n}$, $n\ge3$, $c$ is a fixed point free involution and $d$
   is a transposition; or
  \item $G=Fi_{22}.2$, $c$ is an inner $3$-transposition and $d$ an outer
   involution with centralizer $\OO_8^+(2):\fS_3$.
 \end{enumerate}
\end{thm}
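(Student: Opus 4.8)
The plan is to work through the possible socles $S=F^*(G)$ according to the classification of finite simple groups, using throughout the elementary observation recorded just before the statement: since $c$ and $d^g$ are involutions, $\langle c,d^g\rangle$ is a $2$-group precisely when $cd^g$ has $2$-power order. Thus the hypothesis is equivalent to demanding that the product set $c^Gd^G$ consist of $2$-elements, and I must show this forces $(G,c,d)$ onto the list (1)--(7). After replacing $G$ by $\langle S,c,d\rangle$ I would first dispose of the non-Lie-type socles. For $S$ sporadic, a direct computation of class structure constants from the known character tables (e.g.\ via \Chevie) shows that the only surviving configuration is the one in $Fi_{22}.2$ of conclusion~(7). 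For $S=\fA_n$, a short argument on cycle types (one involution must be fixed-point-free and the other a transposition, else some product has odd order) yields exactly conclusion~(6) in $G=\fS_{2n}$.

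The bulk of the work is the Lie type case, which I would split by defining characteristic. In characteristic~$2$ the involutions are either both unipotent, in which case \cite[Thm.~4.6]{GMT} gives precisely the pairs in~(1); or $c$ is unipotent while $d$ induces a graph automorphism, in which case the classification of the corresponding disconnected algebraic-group configurations in \cite[Thm.~3.7]{GM}, together with Galois descent realized concretely by Examples~\ref{ex:graph-transb}, \ref{ex:ortho p=2b} and~\ref{exmp:E6b}, produces the families in~(2). The exceptional graph automorphisms of $\Sp_4(2^{2m+1})$ and $F_4(2^{2m+1})$ are handled separately via Lemma~\ref{lem:graph} and Example~\ref{exmp:b2f4}, giving~(3). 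The engine for excluding everything else is induction on the rank: choosing an end-node parabolic $P$ whose unipotent radical does not contain $d$, one may assume $c,d\in N_G(P)$ (as $P$ contains a Sylow $2$-subgroup in characteristic~$2$), and the induced action on the Levi quotient reduces to a group of smaller rank, forcing the configuration to already lie on the list.

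In odd characteristic both $c,d$ are semisimple when inner, and are otherwise field, graph, or graph-field automorphisms. For characteristic $\ne3$ I would show that the requirement ``$cd^g$ is always a $2$-element'' is so restrictive that every example descends from the disconnected algebraic-group classification of \cite[Thm.~4.5]{GM}, the finite realizations being exactly Examples~\ref{exmp:graph-reflq} and~\ref{exmp:GOq} and giving conclusion~(4); in particular $d$ must be a graph automorphism, since for two \emph{inner} semisimple involutions over $\FF_q$ with $q>3$ the order of $cd$ divides some $q\pm1$ and need not be a $2$-power. The key mechanism is that this order is controlled by the rotation $cd$ induces on a small invariant nondegenerate subspace $\langle v,cv\rangle$ of the natural module; demanding a $2$-power for all conjugates eliminates all configurations not already present in the algebraic group, and induction on the dimension (passing to orthogonal complements of invariant nondegenerate subspaces) finishes the classical groups, while \Chevie-computations dispatch the exceptional ones.

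The genuinely new and hardest part is characteristic~$3$, where $q-1=2$ and $q+1=4$ are \emph{both} powers of~$2$, so that products which have odd order over larger fields now have $2$-power order. Here I would verify the six subfamilies of conclusion~(5) directly: the linear and unitary cases from the quadratic-action analysis of Example~\ref{exmp:SLn(3)}; the orthogonal cases from Examples~\ref{exmp:O2n(3)}--\ref{exmp:O2n(3)ex3}, in each instance reducing via the invariant subspace $\langle v,cv\rangle$ to a nondegenerate subspace of dimension at most $4$ or~$6$ and tracking its isometry type; the non-conjugate reflection pairs via Lemma~\ref{lem:On(3)}, whose proof isolates $q=3$ as the only field with $q\pm1$ both $2$-powers; and the field-automorphism family~(5)(f) via Example~\ref{exmp:O(9)}. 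The main obstacle throughout is \emph{completeness}: proving that nothing outside the list survives requires carefully propagating eigenspace types through these subspace reductions, correctly handling the fusion of classes under outer automorphisms, and carrying out a finite but nontrivial body of explicit structure-constant computations for the sporadic and small-rank groups.
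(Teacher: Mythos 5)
Your proposal follows essentially the same route as the paper's own proof: the identical CFSG division by socle and defining characteristic, the same key inputs (\cite[Thm.~4.6]{GMT}, \cite[Thms.~3.7, 4.5]{GM}, Examples~\ref{ex:graph-transb}--\ref{exmp:O(9)} and Lemma~\ref{lem:On(3)}), and the same inductive mechanisms --- end-node parabolic reduction in characteristic~2, invariant nondegenerate-subspace reduction for odd-characteristic classical groups, structure-constant computations for sporadic and small-rank cases, and characteristic~3 isolated because $q\pm1$ are both $2$-powers only for $q=3$. The paper's proof is precisely the detailed execution of this plan, organized as a series of propositions (plus Lemmas~\ref{lem:invclasses} and~\ref{lem:exc2} for the $q=3$ orthogonal and high-rank exceptional cases, which your sketch subsumes under its subspace reductions and \Chevie\ computations).
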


Here, $\GL_n^+(q)$ denotes $\GL_n(q)$, and $\GL_n^-(q)=\GU_n(q)$.
We split up the proof of the claim into a series of proposition.

\begin{prop}
 Theorem~\ref{thm:p=2} holds when $S=F^*(G)$ is not of Lie type.
\end{prop}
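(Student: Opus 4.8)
The plan is to invoke the classification of finite simple groups: since $S=F^*(G)$ is simple and not of Lie type, $S$ is either an alternating group $\fA_n$ with $n\ge5$ or one of the sporadic groups. I would treat these two families separately, using throughout the observation recorded at the start of this section that two involutions generate a $2$-group exactly when their product has $2$-power order; thus $(c,d)$ gives a Baer--Fischer pair iff $cd$ is a $2$-element for every $(c,d)$ in the two classes.

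For $S=\fA_n$ the almost simple overgroups are $\fA_n$, $\fS_n$, and (only for $n=6$) the further extensions $\PGL_2(9)$, $M_{10}$ and $\Aut(\fA_6)$. I would encode an involution of $\fS_n$ by the matching on $\{1,\dots,n\}$ formed by its transpositions, and overlay the matchings of $c$ and $d$ to obtain a graph of maximal degree~$2$ whose components are paths and cycles with edges alternating between the two matchings. A direct computation shows that on a path with $\ell$ vertices the product $cd$ acts as an $\ell$-cycle, while on an alternating cycle with $2m$ vertices $cd$ is a product of two $m$-cycles; hence $cd$ is a $2$-element iff every path-component has a $2$-power number of vertices and every cycle-component has length $2m$ with $m$ a power of~$2$. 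Since $d^g$ ranges over all matchings of its fixed size as $g$ ranges over $G$, this condition must hold for \emph{every} overlay, and this is where the argument bites.

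From this I would extract the classification. If $c$ and $d$ each have at least two transpositions, then a short case analysis (according to which of $c,d$ is fixed-point-free) always realizes a path on $3$ or $5$ vertices or an alternating cycle of length~$6$, producing an element of order $3$ or $5$; so one of $c,d$, say $d$, must be a single transposition. If then $c$ fixed a point while moving some pair, placing $d$ across a moved and a fixed point would give a path on three vertices, hence an element of order~$3$; so $c$ must be fixed-point-free. This forces $n$ even, $c$ fixed-point-free and $d$ a transposition, which is case~(6) in $\fS_{2n}$, and conversely one checks every such product has order dividing~$4$. In particular no pair has $c,d$ both inner in $\fA_n$ (each would then have an even, hence $\ge2$, number of transpositions), and the finitely many remaining groups with socle $\fA_6$ are disposed of by direct inspection, yielding nothing beyond~(6). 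For $S$ sporadic I would argue computationally from the known character tables in the \GAP\ library: a pair of involution classes $C,D$ is Baer--Fischer iff the class multiplication coefficient $a_{C,D,E}$ vanishes for every class $E$ of elements of non-$2$-power order. Running this finite test over each sporadic $S$ and each almost simple extension $G$ (here $[G:S]\le2$), using the power maps to pick out the $2$-power classes, leaves exactly $G=Fi_{22}.2$ with $c$ an inner $3$-transposition and $d$ an outer involution with centralizer $\OO_8^+(2):\fS_3$, i.e.\ case~(7); that this really is an example was already exhibited in Example~\ref{exmp:E6b} through the embedding of the double cover of $\tw2E_6(2).2$ into the Baby Monster.

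The main obstacle will be the sporadic computation for the largest groups, where assembling the correct class representatives, power maps and class fusions---and, for the Baby Monster, working with the available character-table data---is delicate, and where one must confirm that the unique surviving pair is precisely the one in~(7). The alternating analysis, by contrast, is elementary once the matching translation is in place; there the only care needed is with the exceptional isomorphisms among the small groups $\fA_5,\fA_6,\fA_8$.
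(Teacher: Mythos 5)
Your combinatorial treatment of $G\le\fS_n$ (overlaying the two matchings and reading off the order of $cd$ from the path and cycle components) is correct and reaches the same conclusion as the paper by a genuinely different route: the paper instead argues by induction starting at $n=5$, reducing to a common invariant $6$-set when $c$ is fixed-point-free and $d$ is not a transposition. Your sporadic-group plan (structure constants from character tables, with only $Fi_{22}.2$ surviving) is exactly the paper's argument; note only that the paper also folds the Tits group $\tw2F_4(2)'$ into this case.

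However, there is a genuine error in your handling of the groups with socle $\fA_6$ not contained in $\fS_6$. You assert that inspection of $\PGL_2(9)$, $M_{10}$ and $\Aut(\fA_6)$ yields ``nothing beyond~(6)''. This is false: in $\Aut(\fA_6)$ the pair of involution classes $\{2b,2c\}$ (\GAP{} notation) \emph{is} a Baer--Fischer pair, and the theorem you are proving lists it as case~(3) with $m=0$, via the identification $\fA_6\cong\Sp_4(2)'$: the class $2b$ consists of the root elements of $\fS_6=\Sp_4(2)$ (transpositions and triple transpositions, which fuse in $\Aut(\fA_6)$), and $2c$ consists of involutions inducing the exceptional graph automorphism. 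Indeed, if $t$ is a transposition and $\gamma\in 2c$, then $(t\gamma)^2=t\cdot t^{\gamma}$ is the product of a transposition and a triple transposition, which by your own path/cycle analysis is always a $2$-element; hence $t\gamma$ is a $2$-element and $\langle t,\gamma\rangle$ is a $2$-group (this is Example~\ref{exmp:b2f4} with $m=0$, resting on \cite[Ex.~6.3]{GMT} and the mechanism of Lemma~\ref{lem:graph}). So the ``direct inspection'' you defer to would in fact turn up an example, and as written your proof claims a classification strictly smaller than the true one --- it would contradict the statement of Theorem~\ref{thm:p=2} itself. A correct proof must find this pair and match it to case~(3) for $m=0$, which is precisely what the paper does at this point.
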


\begin{proof}
For $S$ sporadic or $\tw2F_4(2)'$, a check with the known character
tables shows that the only example occurs in $\Aut(Fi_{22})$. (See also
\cite[Lemma~8.3]{GMT} for the case when $G=S$.)\par

If $S= \fA_6$ and $G$ is not contained in $\fS_6$, then it is easily checked
that $\{2b,2c\}$ is the only possible pair (notation as in {\sf GAP}). This
occurs in~(3) for $m=0$.

If $S \cong \fA_n$, $n\ge5$, and $G \le \fS_n$, then we claim the only
possibility is that (up to order) $c$ is fixed point free and $d$ is a
transposition, as in~(6), which clearly is an example. If $c$ and $d$ both
have fixed points, then the result holds by
induction (starting with $n=5$). So we may assume that $c$ is a fixed point
free involution. Suppose that $d$ is not a transposition. Then $c$ and $d$
both leave invariant a subset of size $6$ with $d$ not acting as a
transposition. It is straightforward to see for all possibilities that
$\langle c,d^g \rangle$ can generate a subgroup of order divisible by $3$.
\end{proof}

\begin{prop}
 Theorem~\ref{thm:p=2} holds when $S=F^*(G)$ is of Lie type in
 characteristic~$2$.
\end{prop}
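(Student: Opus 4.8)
The plan is to mirror the characteristic-$3$ analysis carried out in the proof of Theorem~\ref{thm:p=3}, feeding in the classification of unipotent Baer--Fischer pairs in connected groups from \cite[Thm.~4.6]{GMT} and the disconnected-algebraic-group classification of \cite[Thm.~3.7]{GM}, and then passing to the finite groups by Galois descent using the families recorded at the start of Section~\ref{sec:p=2}. Throughout I set $q=2^a$, may assume $G=\langle S,c,d\rangle$, and use that in characteristic~$2$ every inner involution is unipotent. I would first dispose of the case that both $c$ and $d$ are inner: then they are two unipotent involutions of $S$ (or of the inner-diagonal group), and \cite[Thm.~4.6]{GMT} yields exactly the pairs listed in~(1)(a) for $\Sp_{2n}(2^a)$ and in~(1)(b) for $F_4(2^a)$, with no further possibilities. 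So from now on, interchanging $c$ and $d$ if necessary, I may assume that $d$ induces an outer automorphism of $S$.

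Next I would treat the case where $d$ induces an (ordinary) field or graph-field automorphism. As in the rank-$1$ step of Theorem~\ref{thm:p=3}, \cite[Prop.~4.9.1]{GLS} gives a single class of involutory such automorphisms whose fixed-point group $C_S(d)$ is a subfield (or twisted) subgroup meeting every unipotent class of $S$; after conjugating $c$ into $C_S(d)$ one descends to the smaller almost simple group $C_S(d)\langle d\rangle$ and finishes by induction, the finitely many small base fields being settled by an explicit computation of class multiplication constants. Since no field or graph-field automorphism appears in cases~(1)--(3), this step should produce no examples. For groups of rank at least~$2$ I would use the same end-node parabolic reduction as in characteristic~$3$: choose a parabolic $P$ with $d\notin O_2(P)$, so that $N_G(P)$ contains a Sylow $2$-subgroup and a conjugate of $\langle c,d\rangle$ lies in $N_G(P)$; the induced involutions on the unique simple section of the Levi quotient again satisfy the hypothesis, and induction applies unless that section is one of the exceptional configurations in the conclusion.

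The substantive case is when $d$ induces a graph automorphism. Here I would invoke the fact that every such finite Baer--Fischer pair arises from an $F$-stable configuration in the corresponding (possibly disconnected) algebraic group, and that by \cite[Thm.~3.7]{GM} the only such configurations in characteristic~$2$ are: a transvection together with the $\Sp_{2n}$-type graph involution of $A_{2n-1}$; an orthogonal involution together with a graph automorphism of $D_n$; long root elements together with the $F_4$-type graph automorphism of $E_6$; and, via the exceptional isogenies, the graph involutions of $B_2=\Sp_4$ and of $F_4$ paired with root elements. Galois descent through the Frobenius and Steinberg endomorphisms recorded in Examples~\ref{ex:graph-transb}, \ref{ex:ortho p=2b} and~\ref{exmp:E6b} then realizes precisely the families~(2)(a)--(c), while Lemma~\ref{lem:graph} together with Example~\ref{exmp:b2f4} (which in turn uses \cite[Ex.~6.3]{GMT}) realizes~(3) over $\FF_{2^{2m+1}}$; I would check that these descents account for all $F$-stable configurations and create no spurious ones. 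Finally I would eliminate the remaining outer possibilities --- graph automorphisms on types other than $A_{2n-1}$, $D_n$, $E_6$ (e.g.\ the ``wrong'' graph-involution class on $D_n$, graph automorphisms of $A_{2n}$ or of the other exceptional groups), and configurations in which both $c$ and $d$ are outer --- either by the parabolic/induction step or, for the residual low-rank groups, by direct structure-constant calculations. The main obstacle will be exactly this graph-automorphism analysis: organizing the correspondence with the disconnected-algebraic-group classification of \cite{GM} and verifying that Galois descent neither destroys the examples in~(2)--(3) nor manufactures new ones, the low-rank and small-field base-case computations being the unavoidable but routine remainder.
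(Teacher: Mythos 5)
Your treatment of the two easier cases matches the paper: both elements inner is settled by \cite[Thm.~4.6]{GMT}, and involutory field or graph-field automorphisms are eliminated by an inductive reduction (the paper uses an end-node parabolic stable under graph automorphisms, with the small cases $\PSL_n(2^{2f})$, $2\le n\le4$, done by explicit computation in $\Aut(\PSL_n(4))$; your subfield-centralizer descent is a reasonable variant of the same idea). The genuine gap is in your main step, the graph-automorphism case. You propose to ``invoke the fact that every such finite Baer--Fischer pair arises from an $F$-stable configuration in the corresponding (possibly disconnected) algebraic group'' and then read off the list from \cite[Thm.~3.7]{GM}. That is not a fact one can invoke; it is exactly the hard (exhaustiveness) direction of the classification, and it is false as a general principle. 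The finite hypothesis is strictly weaker than the algebraic one --- only the finitely many $G$-conjugates $d^g$ are tested --- so finite examples need not lift to the algebraic group. Indeed, within the very theorem you are proving, the whole of case~(5) (the characteristic-$3$ involution pairs over $\FF_3$ and $\FF_9$) consists of examples special to small fields with no algebraic-group counterpart, so an argument of the shape ``finite example $\Rightarrow$ algebraic example'' cannot be correct without substantial extra input. The paper avoids this entirely: exhaustiveness is proved inside the finite groups by induction on rank (reduction to a parabolic of type $\GL_{n-2}$ for $\PSL_n$ and $\PSU_n$, to $\OO_{2n-2}^\pm(q)$ for orthogonal groups, and to the graph-stable subsystem subgroup $A_5(q)$, resp.\ $\tw2A_5(q)$, for $E_6(q)$, resp.\ $\tw2E_6(q)$), anchored by structure-constant computations in the base cases $\Aut(\PSL_4(2))=\fS_8$, $\Aut(\PSU_4(2))$, $\PSU_3(2).2$, etc.; the reference \cite[Thm.~3.7]{GM} is used only in the easy direction, to confirm that the surviving configuration is in fact an example.

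A second, related flaw: your case~(3) cannot be subsumed under \cite[Thm.~3.7]{GM} at all. The exceptional ``graph automorphism'' of $\Sp_4$ and $F_4$ in characteristic~$2$ squares to the Frobenius endomorphism, not to the identity, so there is no disconnected algebraic group $G^\circ.2$ whose $F$-fixed points produce these pairs; this is a purely finite phenomenon. The paper accordingly treats it first and separately: for $a$ odd all involution classes of $\Sp_4(2^a).2$ and $F_4(2^a).2$ have representatives in $\Aut(\Sp_4(2))=\fS_6$, resp.\ $\Aut(F_4(2))$, where one computes directly (this is where Example~\ref{exmp:b2f4} and Lemma~\ref{lem:graph} enter as the construction of the example), and for $a$ even one descends to the subsystem subgroup $\Sp_4(2^a).2$ inside $F_4(2^a).2$. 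Your proposal would need this case carved out and handled by finite computation before any appeal to the algebraic-group classification could even be formulated.
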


\begin{proof}
If $c, d$ are both inner, the result follows by \cite[Thm.~4.6]{GMT}.
Thus $S$ is not a Suzuki or Ree group and we may assume that $d$ induces an
outer automorphism, i.e., either a graph, field or graph-field automorphism
(notation as in \cite[2.5.13]{GLS}).
Note that groups in characteristic~$2$ do not have outer diagonal
automorphisms of even order.

We first deal with the exceptional graph-field automorphisms of $B_2(2^a)$
and $F_4(2^a)$. When $a$ is odd, all involution classes have representatives in
$\Aut(B_2(2))=\fS_6$ respectively $\Aut(F_4(2))$, and direct calculation shows
that the only examples in the latter two cases are those in~(3) as in
Example~\ref{exmp:b2f4}. If $a$ is even, we use that there are no cases in
$\Sp_4(4).2$, and that all involution classes in $F_4(2^a).2$ contain
representatives in the subsystem subgroup $\Sp_4(2^a).2$ to see that no
new examples arise.

Next assume that $d$ induces a field or graph-field automorphism (in
particular, $S$ is not twisted of degree~2). If $S$ is one of $\PSL_n(2^{2f})$,
$2\le n\le 4$, then all unipotent classes of $\Aut(S)$ have representatives
in $\Aut(\PSL_n(4))$, and direct calculation shows that there are no examples.
Otherwise, let $P$ be an end node parabolic subgroup of $S$ stable under any
graph automorphism of order~2, respectively one of type $\GL_{n-2}$ in
$\PSL_n(q)$, and such that it contains conjugates of $c$ outside its unipotent
radical. Then $N_{\Aut(S)}(P)S=\Aut(S)$, and $d$ acts by field or graph-field
automorphisms on the simple Levi factor of $P$. Hence there are no examples
by induction.

Thus we may suppose that $d$ is a graph automorphism, so $S$ is of (possibly
twisted) type $A_n$, $D_n$ or $E_6$. First consider $S=\PSL_n(q)$. By direct
calculation there are no examples in $\Aut(\PSL_3(2))$, and the only
possibility in $\Aut(\PSL_4(2))=\fS_8$ is that $c$ is a transvection and $d$
is a graph automorphism with centralizer $\Sp_4(2)$, as in~(2a) of
the conclusion. Now for $n\ge5$ we may again reduce to a parabolic subgroup $P$
of type $\GL_{n-2}$ normalized by suitable conjugates of $c$ and $d$. Hence by
induction there are no examples when $n$ is odd, and when $n$ is even the
image of $c$ in $N_G(P)/O_2(P)$ must be a transvection and $d$ a graph
automorphism with centralizer $\Sp_n(q)$. If $c$ is not a transvection
we may arrange that its image in $P/O_2(P)$ is neither. So we only get
case~(2a) which is an example by \cite[Thm.~3.7]{GM}.

Next assume that $S=\PSU_n(q)$. Again by direct calculation there are no
examples in $\PSU_3(2).2=3^{1+2}.2.\fS_4$, while for $\Aut(\PSU_4(2))$ we
only find the case in assertion~(2a). We can now argue by induction exactly as
in the previous case.

Now let $S=\OO_{2n}^+(q)$, $n\ge4$. By the previous paragraphs, for
$\OO_6^+(q)=\PSL_4(q)$ we just have the example in~(2a). By descending to the
parabolic subgroup of type $\OO_{2n-2}^+(q)$ we see that this is the only
case for $\OO_{2n}^+(q)$, leading to~(2b) by Example~\ref{ex:ortho p=2b}.
The same inductive argument works for $S=\OO_{2n}^-(q)$, starting at
$\OO_6^-(q)=\PSU_4(q)$. Finally, for $S=E_6(q)$ note that the subsystem
subgroup $A_5(q)$ contains
representatives from all three inner involution classes and is stabilized by
the graph automorphism of order~2. Since the only example for $A_5(q)$ arises
from graph automorphisms (see above), we can only get an example for $S$ when
one class is the inner class of type $A_1$, and the other contains the graph
automorphism. This actually occurs by Example~\ref{exmp:E6b}. Similarly, for
$S=\tw2E_6(q)$ we may descend to the subsystem subgroup
$\tw2A_5(q)$ to arrive at~(2c).
\end{proof}

\begin{prop}   \label{prop:classodd}
 Theorem~\ref{thm:p=2} holds when $S=F^*(G)$ is of classical Lie type in
 odd characteristic $p$, but not an even-dimensional orthogonal group.
\end{prop}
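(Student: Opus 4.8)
The plan is to organize the argument by the type of outer automorphism that $c$ and $d$ induce on $S$, since in odd characteristic both are semisimple involutions and the entire problem is controlled by the action on the natural module $V$ of the classical group. I may assume $G=\langle S,c,d\rangle$. The basic reduction I would use throughout: two involutions generate a dihedral group, so the hypothesis is equivalent to requiring that $cd^g$ be a $2$-element for every $g\in G$; and whenever $c$ and $d^g$ share a proper nondegenerate invariant subspace $W$ (typically $W=\langle v,cv,\ldots\rangle$ spanned by an eigenvector of $d^g$ together with its $c$-images, as in Examples~\ref{exmp:SLn(3)} and~\ref{exmp:O(9)}), I restrict to $W$ and to $W^\perp$ and argue by induction on $\dim V$. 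The generic families will be recognized as Galois descents of the algebraic-group configurations classified in \cite[Thm.~4.5]{GM}, so the real work is to control the finite, small-field phenomena and the outer automorphisms.

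First I would treat the case that both $c,d$ lie in $\mathrm{Inndiag}(S)$ (this absorbs the diagonal automorphisms, which act linearly on $V$). Here I decompose $V$ into common $\langle c,d^g\rangle$-invariant pieces: on a nondegenerate $2$-dimensional piece the product of two involutions is a rotation of order dividing $q\pm1$, so for $cd^g$ to be a $2$-element for \emph{all} conjugates one needs both $q-1$ and $q+1$ to be $2$-powers, forcing $q=3$, or else the very rigid quadratic eigenvalue pattern $\pm i$. For $\PSL_{2n}$ and $\PSU_{2n}$ this isolates exactly the configuration of Example~\ref{exmp:SLn(3)}, namely $c$ with eigenvalues $\pm i$ and $d$ a reflection over $\FF_3$, giving case~(5a); the module computation there shows $O_3(\langle c,d\rangle)=1$, so these pairs genuinely work. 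For $\PSp_{2n}(q)$ and the odd-dimensional orthogonal groups $\OO_{2n+1}(q)$ I would show that no purely inner example survives: the relevant reflection pairs are precisely those of Lemma~\ref{lem:On(3)}, which are either accounted for among the even-dimensional orthogonal cases (excluded from this proposition) or produce an element of order divisible by an odd prime once a bad conjugate $d^g$ is chosen.

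Next I would handle a graph automorphism, which for the groups at hand occurs only for $S=\PSL_n(q)$ or $\PSU_n(q)$, since types $B_n$ and $C_n$ have no graph automorphism of order~$2$. As in the characteristic-$2$ analysis I pass to an end-node parabolic subgroup $P$ of type $\GL_{n-2}$ normalized by suitable conjugates of $c$ and $d$, so that $d$ induces a graph automorphism on the simple Levi factor and induction applies; the base cases $\PSL_3,\PSL_4,\PSU_3,\PSU_4$ and their covers I would settle by direct computation of structure constants. The only surviving family is $c$ a pseudo-reflection modulo scalars together with a graph automorphism whose centralizer is $\PSp_{2n}(q)$, which is case~(4a) and is a genuine example by Example~\ref{exmp:graph-reflq}.

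Finally, if $d$ (or $c$) induces a field or graph-field automorphism I would use subfield descent: by \cite[Prop.~4.9.1]{GLS} and \cite[Lemma~8.6]{GMT} a conjugate of $d$ centralizes a subfield subgroup $S(q_0)$ that meets every class of involutions, so I replace $S$ by $S(q_0)$ and finish by induction. The single exception, and the step I expect to be the main obstacle, is $S=\OO_{2n+1}(9)$ with $d$ the field automorphism of order~$2$ and $c$ a reflection of minus type: here Example~\ref{exmp:O(9)} produces a genuine Baer--Fischer pair that does \emph{not} descend to a subfield, giving case~(5f) (the coincidence $\Omega_5(q)\cong\PSp_4(q)$ shows why no separate symplectic family is needed). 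The delicate points are to prove that descent really applies in all the other field and graph-field situations, so that no further $\FF_9$-type surprises occur; to keep the plus/minus eigenspace types over $\FF_q$ straight, since these depend on $q\bmod 4$ and on whether $-1$ is a square; and to carry out the unavoidable explicit small-rank computations cleanly, so that the two field-specific families~(5a) over $\FF_3$ and~(5f) over $\FF_9$ are correctly separated from the generic picture.
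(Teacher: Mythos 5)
There is a genuine gap, and it is not a small one: your proposed classification contradicts the very theorem the proposition is supposed to establish, because it omits case~(5b) entirely. Lemma~\ref{lem:On(3)} asserts that two \emph{non-conjugate reflections} in $\GO_n^{(\pm)}(q)$, $n\ge 3$, generate a $2$-group for all conjugates exactly when $q=3$, and this holds for \emph{odd} $n$ as well. Modulo the center, such a pair gives inner-diagonal involutions of the adjoint group of type $B_n$, so $F^*(G)=\OO_{2n+1}(3)$, $n\ge3$, does admit purely inner-diagonal Baer--Fischer pairs; indeed the paper's proof records that for $\OO_7(3)$ explicit computation shows the only example is precisely case~(5b), via Lemma~\ref{lem:On(3)}. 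Your first case flatly claims that ``no purely inner example survives'' for the odd-dimensional orthogonal groups, and your justification --- that the Lemma~\ref{lem:On(3)} pairs are ``accounted for among the even-dimensional orthogonal cases (excluded from this proposition)'' --- is wrong: only the even-dimensional groups are excluded, and the odd-dimensional reflection pairs over $\FF_3$ belong to this proposition. Note that your own criterion (both $q-1$ and $q+1$ must be $2$-powers, forcing $q=3$) is exactly the mechanism that makes these pairs work; the error is in then discarding them. The same omission recurs in your graph-automorphism case: for $\PSL_4(3)$ and $\PSU_4(3)$, a pair in which \emph{both} $c$ and $d$ are graph involutions (equivalently, non-conjugate reflections in $\OO_6^{\pm}(3)$) is a Baer--Fischer pair --- this is case~(5b) with $n=6$ --- so your assertion that the only family surviving the graph case is~(4a) is false, and the ``direct computation'' you invoke for $\PSL_4$, $\PSU_4$ would in fact turn these up.

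Two further points. First, your claim that no purely inner example survives for $\PSp_{2n}(q)$ fails for $\PSp_4(3)\cong\PSU_4(2)$: from the characteristic-$3$ viewpoint the pair there consists of an inner involution and a diagonal involution, so your inductive argument, if it really proved ``no inner examples,'' would prove something false; the paper escapes this only by invoking the exceptional isomorphism and filing the example under the characteristic-$2$ case~(2a), a caveat you never make. Second, your field/graph-field case is self-admittedly unresolved: subfield descent from $\OO_{2n+1}(9)$ lands in $\OO_{2n+1}(3)$, which (by the very case~(5b) you omitted) \emph{does} have examples, so descent alone cannot rule the configuration in or out --- this is exactly why the $q=9$ family~(5f) exists and why the paper settles it by the explicit two-dimensional matrix computation of Example~\ref{exmp:O(9)} together with structure-constant computations for $\PSp_4(9)$ and $\PSp_6(9)$, rather than by induction. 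Your overall architecture (case split by automorphism type, geometric restriction to invariant subspaces, subfield descent) is reasonable and close in spirit to the paper's, but as written it classifies the wrong set of pairs.
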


\begin{proof}
First suppose that $F^*(G) = \PSp_{2n}(q)$ with $n\ge1$ and $q$ odd. If $n=1$
(and $q\ge5$)
an elementary calculation shows that inner diagonal involutions do not lead to
examples. Next assume that $G$ involves field automorphisms (so $q=q_0^2\ge9$).
If there are two such classes, then $[G:F^*(G)]=2$ and by direct matrix
calculation we find products which are not of 2-power order. If just one
of the two classes contains field automorphisms, again a direct calculation
shows that necessarily $q_0^2-1$ must be a 2-power, so the only example occurs
for $S=\PSL_2(9)=\fA_6$, a case already discussed. \par
So now suppose that $S=\PSp_{2n}(q)$ with $n\ge2$.
All involution classes have representatives normalizing but not centralizing
a Levi subgroup of type $\PSp_2(q)=\PSL_2(q)$. Thus by the previous case we
can only get examples when $q\in\{3,9\}$. In these cases, we reduce to
$\PSp_4(q)$ or $\PSp_6(q)$ instead. The
possibilities for $\PSp_4(3)=\PSU_4(2)$ have already been discussed. Explicit
computation of structure constants shows that there are no cases for
$\PSp_6(3)$. Since the only examples for $\PSL_2(9)$ involve field
automorphisms, the same must be true for $\PSp_{2n}(9)$. For $\PSp_4(9)$
explicit computation of structure constants in \GAP{} \cite{GAP} yields only
case~(5f).
Finally, for $\PSp_6(9)$ one class
must contain field automorphisms, as for $\PSp_4(9)$, which is hence uniquely
determined by \cite[Prop.~4.9.1]{GLS}, and the other must contain diagonal
automorphisms, with all Jordan blocks of size~2. All such involution classes
normalize the extension field subgroup $\PSL_2(9^3)$, whence we get no further
example. (Alternatively, a direct computation with \GAP{} gives the claim.)

Now assume that $S=\PSL_n(q)$ with $n\ge 3$. Note that any inner diagonal
involution can either be lifted to an involution in $\GL_n(q)$, or to an
element with all Jordan blocks of size~2. Thus if $c, d$ are both inner
diagonal, we can reduce to the case of
$\PGL_2(q)$, whence there are no examples for $q\ne3$ (as the examples for
$\PSL_2(9)$ involve field automorphisms). Similarly if $q=3$, unless all
eigenvalues for $c$ (interchanging $c$ and $d$ if necessary) are $\pm i$, we
may reduce to $\PSL_3(3)$, for which no example occurs. In particular,
$n$ is even. We claim that $d$ must be a reflection (modulo scalars).
Since $d$ cannot be conjugate to $c$ (by the standard Baer--Suzuki theorem),
it follows that $d$ has all eigenvalues $\pm 1$. If $d$ is not a reflection,
then we can reduce to $\PSL_4(3)$ and see from the character table that there
are no examples, while if $d$ is a reflection, we get case~(5a) by
Example~\ref{exmp:SLn(3)}.

Next suppose that $d$ is an outer involution. If $d$ is a field or graph-field
automorphism (so in particular $q\ge9$), we reduce to the case $n=2$ (when
$q \ne 9$) or $n=3$ and it is straightforward to compute that there are no
examples, while for $q=9$ and $n$ even, we may reduce to
$\PSL_2(81)\le\PSL_4(9)$ for which we already saw that no example exists.

So suppose that $d$ is a graph automorphism. If $c$ is also a graph
automorphism, then $n$ is even since for odd $n$ there is only one class.
We may reduce to $\PSL_4(q)$, in which case it is easy to write down
representatives for all four classes such that products do not have 2-power
order, except when $q=3$. For $q=3$, a direct check shows that only the
example in~(5b) is possible (see Lemma~\ref{lem:On(3)}).

So $c$ is an inner diagonal involution. If $n$ is odd, then we can reduce
to the case of $n=3$ where the result is straightforward to verify.
Similarly, when $n$ is even we may reduce to the case that $n=2$ to see that
$q\in\{3,9\}$. In that case, we reduce to $n=4$ where the only examples are
those in~(4a), see Example~\ref{exmp:graph-reflq}.

Now let $S=\PSU_n(q)$ with $n\ge3$. Again any inner diagonal involution can
either be lifted
to an involution in $\GU_n(q)$, or to an element with all Jordan blocks of
size~2. Thus if $c,d$ are both inner diagonal, we can reduce to the case of
$\PGL_2(q)$, whence there are no examples for $q\ne3$. Similarly if $q=3$,
unless all Jordan blocks for $c$, say, have size~$2$, we may reduce to
$\PSU_3(3)$, for which no example occurs. In particular, $n$ is even. As in
the linear group case, $d$ must be a reflection (modulo scalars), in which
case we get case~(5a) by Example~\ref{exmp:SLn(3)}.
Next suppose that $d$ is an involution which is not inner-diagonal, hence a
graph automorphism. We now argue as for the case of $\PSL_n(q)$ to arrive at
the cases~(4a).

Finally, assume that $S = \OO_{2n+1}(q)$ with $n\ge3$. Again, we may reduce to
a Levi subgroup of type $\OO_{2n-1}(q)$. Note that for $\OO_5(q)=\PSp_4(q)$
we saw that $q\in\{3,9\}$ and, for $q=9$, one class consists of reflections
with centralizer of minus type, the other of field automorphisms. The latter
gives case~(5f) by Example~\ref{exmp:O(9)}. For $\OO_7(3)$ explicit computation
shows that the only example is as in case~(5b), by Lemma~\ref{lem:On(3)}.
\end{proof}

\begin{prop}
 Theorem~\ref{thm:p=2} holds when $F^*(G)=\OO_{2n}^\pm(q)$, $n\ge3$,
 with $q=p^a\ne3$ odd.
\end{prop}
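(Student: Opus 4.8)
The plan is to run through the automorphisms that $c$ and $d$ can induce on $S=\OO_{2n}^\pm(q)$---inner-diagonal, field, graph-field and graph---just as in the proof of Proposition~\ref{prop:classodd}, analysing each involution through its action on the natural $2n$-dimensional orthogonal module $V$ (defined over $\overline{\FF}_q$ where convenient). The engine throughout is a geometric induction on $n$: whenever $c$ and $d$ act trivially on a common non-degenerate subspace $U$ of $V$, I pass to $U^\perp$; and whenever I can exhibit a proper non-degenerate $\langle c,d\rangle$-invariant subspace on which the pair already generates a group of non-$2$-power order, I am finished. The base of the induction is $\OO_6^+\cong\PSL_4$ and $\OO_6^-\cong\PSU_4$, which were settled in Proposition~\ref{prop:classodd} (and which produce no example once $q\ne3$), supplemented by a few explicit structure-constant computations in $\OO_8^\pm(q)$ for the smallest admissible $q$.

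First I would dispose of the case where $c$ and $d$ both induce inner-diagonal automorphisms. Since $q$ is odd the relevant involutions have non-degenerate eigenspaces, and two of them act trivially on the orthogonal complement of the sum of their $-1$-eigenspaces; reducing along that complement brings $n$ down until the eigenspaces are either small---reflections or bireflections, where Lemma~\ref{lem:On(3)} and the small-rank computations force $q=3$---or so entangled that no non-degenerate common complement survives, which over $\overline{\FF}_q$ only happens for the $\GL_n$-type involutions with totally isotropic eigenspaces analysed in Examples~\ref{exmp:O2n(3)ex2} and~\ref{exmp:O2n(3)ex3}. In every surviving configuration the reduction lands in a solved small case and exhibits an element of non-$2$-power order as soon as $q\ne3$. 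Thus the inner-diagonal pairs yield no Baer--Fischer pair once $q\ne3$; and all orthogonal examples of Theorem~\ref{thm:p=2} special to $q=3$, namely (5b)--(5e), are excluded by our hypothesis.

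Next I would treat field and graph-field automorphisms, where necessarily $q=q_0^2\ge9$. Here I descend to the fixed subfield subgroup of the field automorphism (via a \cite[Lemma~8.6]{GMT}-style argument, or by reducing directly to $\OO_6^\pm$) and conclude, exactly as in Proposition~\ref{prop:classodd}, that nothing survives: the only orthogonal field-automorphism example is in the odd-dimensional group $\OO_{2n+1}(9)$, case~(5f), which is not of the present shape.

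The main work---and the principal obstacle---is the graph-automorphism case, which is precisely where (4b) and (4c) arise by Galois descent from \cite[Thm.~4.5]{GM} as in Example~\ref{exmp:GOq}. I would realise the relevant graph automorphism $d$ as a reflection in $\GO_{2n}(q)\setminus\SO_{2n}(q)$, so that $d$ has centralizer $\OO_{2n-1}(q)$, and take $c$ inner with centralizer $\GL_n^\pm(q)$. The positive direction is Example~\ref{exmp:GOq}: the stabilizer $H\cong\GL_n^\pm(q)$ of a maximal isotropic subspace acts transitively on the $F$-stable non-degenerate $1$-spaces, so that $G=C_G(d)\,C_G(c)$; as in the proof of Lemma~\ref{lem:graph} this forces every $\langle c,d^g\rangle$ to be $G$-conjugate to $\langle c,d\rangle$, reducing the whole verification to the single algebraic-group computation of Example~\ref{exmp:GOq}, where the product $cd$ has $2$-power order. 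The difficulty is the converse. I must show that no inner $c$ other than one with centralizer $\GL_n^\pm(q)$ gives a Baer--Fischer pair, and I must track correctly the dependence on the parity of $n$ and the $\pm$-type of the form, which is exactly what splits the conclusion into (4b) ($n$ even) and (4c) ($n$ odd). For this I would examine the span $X=\langle Y,cY\rangle$ of the reflection's $-1$-line $Y$ and its $c$-image, as in Examples~\ref{exmp:O2n(3)ex2} and~\ref{exmp:O2n(3)ex3}: when $X$ is non-degenerate of small dimension the pair is a $2$-group, while a degenerate or wrongly-typed $X$ lets me enlarge to a non-degenerate $\langle c,d\rangle$-invariant space of $\OO_6$- or $\OO_8$-type and invoke the already-solved small cases to produce an element of non-$2$-power order. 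I would also have to rule out the remaining involutory graph automorphisms (those not conjugate to a reflection) and the triality-related subtleties when $n=4$, showing in each case that the eigenspace geometry forces a reduction to a settled subcase. The bookkeeping of form-types and parities here is the genuinely delicate part; the rest is routine descent.
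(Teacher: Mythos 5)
Your overall strategy coincides with the paper's: geometric induction on nondegenerate invariant subspaces of the natural module, with base cases $\OO_6^+(q)\cong\PSL_4(q)$ and $\OO_6^-(q)\cong\PSU_4(q)$ from Proposition~\ref{prop:classodd}, field(-type) automorphisms eliminated by descent, and the positive examples (4b), (4c) obtained from Example~\ref{exmp:GOq}. One small correction there: the reduction of $\langle c,d^g\rangle$ to a single conjugate $\langle c,d\rangle$ rests on the factorization $G=C_G(c)C_G(d)$ coming from the transitivity statement in Example~\ref{exmp:GOq}, not on Lemma~\ref{lem:graph}, whose proof concerns an automorphism interchanging two classes; the argument you intend is nevertheless the right one.

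The genuine gap is the converse direction, which you explicitly defer (``the bookkeeping of form-types and parities here is the genuinely delicate part''). That bookkeeping \emph{is} the proof, and the tool you propose --- the span $X=\langle Y,cY\rangle$ of the reflection's $(-1)$-line --- only addresses pairs in which one element is a reflection; it says nothing about pairs of inner-diagonal involutions with large eigenspaces, nor about elements of $\GO_{2n}(q)\setminus\SO_{2n}(q)$ whose $(-1)$-eigenspace has odd dimension at least $3$, all of which must be excluded. The paper's device, which is what makes the case analysis terminate, is uniform: after discarding field automorphisms, every remaining involution class has representatives in $\CO_{2n}^\pm(q)$, and any such involution leaves invariant a $4$-dimensional nondegenerate subspace of $+$ type; hence any pair \emph{not} on the list of Theorem~\ref{thm:p=2} can be cut down, staying off the list, to $2n=6$ or $2n=8$. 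Dimension $6$ is settled by the $\PSL_4/\PSU_4$ results; in dimension $8$ one peels $2$-dimensional nondegenerate subspaces of a common type, and this fails in exactly one configuration --- one element with totally singular $\pm1$-eigenspaces, the other with minimal polynomial $x^2+1$, forcing $\CO_8^+(q)$ --- which is then resolved by letting the pair act on a totally singular $4$-dimensional space and invoking the $\PSL_4$ result once more. Without this observation (or an equivalent uniform reduction), your plan leaves unproved the central claim that only centralizer types $\GL_n^\pm(q)$ opposite a reflection survive, with the stated dependence on the parity of $n$ and the type of the form.
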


\begin{proof}
Note that the cases for $\OO_6^+(q)=\PSL_4(q)$ and $\OO_6^-(q)=\PSU_4(q)$ were
classified in Proposition~\ref{prop:classodd}. So let $S=\OO_{2n}^\pm(q)$ with
$n\ge4$. If both classes contain field automorphisms, they may be chosen to
normalize a Levi subgroup of type $\OO_6^\pm(q)$. Since there are no examples
with
field automorphisms for $\OO_6^\pm(q)$, these cannot occur for $S$ either.
All other classes of involutions have representatives in the conformal
orthogonal group $\CO_{2n}^\pm(q)$, and they contain elements normalizing,
but not centralizing a Levi subgroup of type $\OO_{2n-2}^\pm(q)$. Thus, even
with just one class containing field automorphisms, we do not get examples.

We may hence assume we are inside $\CO_{2n}^\pm(q)$, $n\ge4$.

We claim that the only examples are when one of the classes are reflections
and the other is as given in (4b) and~(4c). By Example~\ref{exmp:GOq}, the
cases listed in the theorem are in fact examples. Note that any involution
leaves invariant a $4$-dimensional non degenerate space of $+$ type. In
particular,
we can reduce to the case that $2n=6$  or $8$ (since starting from any
case not allowed in the theorem, we can peel of $4$-dimensional nondegenerate
spaces in such a way that the pair is still not as in the theorem).  If $2n=6$,
we are done by appealing to the results for $\PSU_4$ and $\PSL_4$.  If $2n=8$,
the same argument with $2$-dimensional nondegenerate spaces works unless
the elements do not leave invariant a $2$-dimensional nondegenerate space
of the same type.  This only happens when one of the involutions has
eigenvalues $\pm 1$ and totally singular eigenspaces and the other element
has quadratic minimal polynomial.  Thus, we are in $\CO_{8}^+(q)$.
In this case each element acts nontrivially
on a totally singular $4$-dimensional space and the result then follows by
the $L_4$ result.
\end{proof}

In order to deal with the case $q=3$, we first describe the relevant classes
of involutions.

\begin{lem}   \label{lem:invclasses}
 Let $C$ be a conjugacy class of $\CO_{2n}^\pm(3)$, $n\ge3$, containing
 involutions modulo the center. Then one of the following holds:
\begin{enumerate}[\rm(1)]
 \item $C$ is contained in $\GO_{2n}^\pm(3)$ and consists of elements with
  eigenvalues
  $\pm 1$. Then we may assume that the $-1$ eigenspace is  $e$-dimensional
  with $e \le n$. The centralizer in the algebraic group is
  $\GO_e \times \GO_{2n-e}$. There are two classes depending upon the type
  of the $-1$ eigenspace.
 \item $C$ is not in $\GO_{2n}^\pm(3)$ and consists of elements with eigenvalues
  $\pm 1$. It follows that the eigenspaces for $C$ are maximal totally
  singular. The centralizer is $\GL_n(3)$ and so this only occurs in $+$ type.
 \item $C$ is contained in $\GO_{2n}^\pm(3)$ and has eigenvalues $\pm i$. Thus,
  $C$ lies in $\GO_{2n}^+(3)$ if $n$ is even, in $\GO_{2n}^-(3)$ if $n$ is odd.
  The centralizer in the algebraic group $\SO_{2n}$ is $\GL_n$ since the
  eigenspaces are totally singular. Thus, the centralizer is $\GU_n(3)$.
 \item $C$ is not contained in $\GO_{2n}^\pm(3)$ and has eigenvalues $\pm i$.
  In the
  algebraic group, the eigenspaces are nondegenerate of dimension~$n$ and so
  the centralizer is $\GO_n \times \GO_n$ --- in $\SO_{2n}$ the centralizer
  is a subgroup of index~$2$ and so there are two such classes in
  $\SO_{2n}^\pm(3)$ and also in $\GO_{2n}^\pm(3)$, with centralizer
  $\GO_n^{(\pm)}(9)$.
 \end{enumerate}
\end{lem}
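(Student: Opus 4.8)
The statement is a classification of conjugacy classes of involutions (modulo center) in the conformal orthogonal group $\CO_{2n}^\pm(3)$, $n\ge3$. The natural strategy is to work first in the algebraic group $\CO_{2n}(k)$ over $k=\overline{\FF}_3$, determine the possible centralizer types of semisimple involutions there, and then descend to the finite group via the standard Lang--Steinberg machinery to count the rational classes and read off their finite centralizers. The key observation that organizes everything is that an element $x$ with $x^2$ central acts on the natural module $V$ with at most two eigenvalues (over $k$), and by rescaling by the relevant central scalar these are either $\{+1,-1\}$ (when $x\in\GO_{2n}$, i.e. $x$ preserves the form up to sign $+1$) or $\{+i,-i\}$ (when $x$ lies in $\CO\setminus\GO$, scaling the form by $-1$). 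This immediately produces the four-way split into cases (1)--(4) according to the eigenvalue set and whether $x\in\GO_{2n}^\pm(3)$.

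The plan is to treat the two eigenvalue regimes separately. For eigenvalues $\pm1$, the two eigenspaces are orthogonal (since distinct eigenspaces of an orthogonal transformation are perpendicular), so when $x$ preserves the form the centralizer in the algebraic group is $\GO_e\times\GO_{2n-e}$ where $e=\dim[x,V]$ is the dimension of the $-1$ eigenspace; this is case~(1), with the two subcases recording the two possible isometry types of the $-1$ space. When instead $x\notin\GO_{2n}$ but still has eigenvalues $\pm1$, the scaling forces each eigenspace to be totally isotropic, hence of dimension exactly $n$, and the stabilizer of a complementary pair of maximal isotropic subspaces is a Levi of type $\GL_n$; over $\FF_3$ this becomes $\GL_n(3)$, and such a configuration exists only in $+$ type --- this is case~(2). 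For eigenvalues $\pm i$, since $i\notin\FF_3$ the $\pm i$ eigenspaces are defined only over $\FF_9$ and are interchanged by the Frobenius $\phi:=\phi_3$. When $x\in\GO_{2n}(3)$ (case~(3)) the two algebraic eigenspaces are maximal totally singular and Galois-conjugate, so the centralizer $\GL_n$ in $\SO_{2n}$ becomes the twisted form $\GU_n(3)$; a parity count on the discriminant (the product of eigenvalues is $(+i)^n(-i)^n=1$ but the type is governed by whether $n$ is even or odd) places this class in $+$ type for even $n$ and $-$ type for odd $n$. When $x\notin\GO_{2n}$ (case~(4)) the $\pm i$ eigenspaces become nondegenerate of dimension~$n$, giving algebraic centralizer $\GO_n\times\GO_n$; passing to $\SO_{2n}$ cuts this by an index-$2$ determinant condition, and Galois descent along $\phi$ (which swaps the two factors, forcing them over $\FF_9$) yields centralizer $\GO_n^{(\pm)}(9)$, and one records that there are two rational classes because the $\SO_{2n}$-stabilizer has index~$2$ in the $\GO_{2n}$-one.

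I would organize the write-up as: (i) reduce to the eigenvalue dichotomy by the central-scalar argument; (ii) dispose of cases (1) and (2) by the orthogonality-of-eigenspaces observation and the isotropy forced by the scaling; (iii) handle (3) and (4) by the same eigenspace analysis over $\FF_9$ together with a Lang--Steinberg count of the Frobenius-stable classes. The routine parts are the identifications of the algebraic centralizer types, which are standard. The main obstacle I anticipate is the careful \emph{rationality/type bookkeeping}: correctly determining, in each case, how many $\CO_{2n}^\pm(3)$-classes arise from a single algebraic class, which sign $(\pm)$ of the orthogonal space each lands in, and the precise finite form ($\GU_n(3)$ versus $\GL_n(3)$, $\GO_n^{(\pm)}(9)$) of the centralizer. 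This amounts to tracking how the Frobenius acts on the component group of the algebraic centralizer and on the set of maximal isotropic subspaces, and verifying the discriminant/type constraints that tie the parity of $n$ to the allowed value of $\pm$; these are the assertions in the lemma's parenthetical remarks and are where sign errors would most easily creep in.
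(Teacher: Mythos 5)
Your proposal matches the paper's treatment: the paper states this lemma without a separate proof (the justifications are embedded in the statement itself), and your plan simply expands those standard arguments --- the eigenspace orthogonality or total singularity forced by the form-scaling constraint, the identification of the algebraic centralizers $\GO_e\times\GO_{2n-e}$, $\GL_n$, $\GO_n\times\GO_n$, and Frobenius/Galois descent to obtain the finite forms $\GL_n(3)$, $\GU_n(3)$, $\GO_n^{(\pm)}(9)$ together with the type constraints. One slip to correct before writing it up: your framing sentence ties the eigenvalue set to the conformal multiplier, claiming eigenvalues $\{+1,-1\}$ occur exactly when $x\in\GO_{2n}$ and $\{+i,-i\}$ exactly when $x\in\CO_{2n}\setminus\GO_{2n}$; in fact the eigenvalue set is governed by whether $x^2=I$ or $x^2=-I$, and the two dichotomies are independent --- cases (2) and (3) of the lemma are precisely the mixed combinations, so as stated that sentence would collapse the four cases to two. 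Since your subsequent case analysis does treat all four combinations correctly (isotropy of each eigenspace in case (2), Galois-conjugate singular eigenspaces giving $\GU_n(3)$ in case (3)), this is an inconsistency of exposition rather than a gap in the argument.
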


\begin{prop}
 Theorem~\ref{thm:p=2} holds when $F^*(G)=\OO_{2n}^\pm(3)$, $n\ge3$.
\end{prop}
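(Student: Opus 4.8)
The plan is to reduce to the conformal orthogonal group $\CO_{2n}^\pm(3)$ and then run a dimension induction against the explicit list of involution classes furnished by Lemma~\ref{lem:invclasses}. Since $q=3$ admits no field automorphisms, the only outer automorphisms of the socle that can have even order are diagonal and graph automorphisms: the inner-diagonal involutions are visible in $\GO_{2n}^\pm(3)$, while the graph automorphism is realised by an improper similarity, so every relevant involution class has a representative (modulo scalars) inside $\CO_{2n}^\pm(3)$. I may therefore assume $c,d\in\CO_{2n}^\pm(3)$ and apply Lemma~\ref{lem:invclasses}, which leaves exactly four types of classes (modulo the center): (1) eigenvalues $\pm1$ lying in $\GO$, with centralizer $\GO_e\times\GO_{2n-e}$; (2) eigenvalues $\pm1$ with totally singular eigenspaces, centralizer $\GL_n(3)$ and $+$ type only; (3) eigenvalues $\pm i$ in $\GO$, centralizer $\GU_n(3)$; and (4) eigenvalues $\pm i$ outside $\GO$, centralizer $\GO_n^{(\pm)}(9)$. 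By the ordinary Baer--Suzuki theorem $c$ and $d$ cannot be conjugate, so I only need to treat pairs of \emph{distinct} classes among these four types.

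The positive direction is already supplied by the preceding examples, and I would merely record the matching. Two non-conjugate reflections (both of type~(1) with a $1$-dimensional $-1$-eigenspace) give case~(5b) by Lemma~\ref{lem:On(3)}. A reflection together with a class of type~(4) gives case~(5c) by Example~\ref{exmp:O2n(3)}. Finally, a bireflection (type~(1) with a $2$-dimensional $-1$-eigenspace) paired with a class of type~(2) or~(3) gives cases~(5d) and~(5e); here the precise sign of the bireflection's $-1$-eigenspace, as a function of the type of the type-(2)/(3) class and the parity of $n$, is exactly the one computed in Example~\ref{exmp:O2n(3)ex3}, while Example~\ref{exmp:O2n(3)ex2} records the companion reflection configuration.

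For completeness I would take a Baer--Fischer pair $(c,d)$ not appearing in the list and derive a contradiction by descent on $n$. Any involution fixes a nondegenerate $4$-space of $+$ type, so in the generic situation $c$ and $d$ share a common invariant nondegenerate subspace on which they restrict nontrivially and still form a pair not in the list, reducing $2n$ by $4$; iterating brings us to a base case. The base of the induction is $2n=6$, where $\OO_6^+(3)\cong\PSL_4(3)$ and $\OO_6^-(3)\cong\PSU_4(3)$ have already been settled in Proposition~\ref{prop:classodd}, together with $2n=8$, where a direct structure-constant computation in \GAP{} \cite{GAP} disposes of the finitely many remaining class pairs. The one serious obstacle is the arrangement of totally singular eigenspaces occurring in types~(2) and~(3): there one cannot always peel off an invariant nondegenerate subspace of the matching type, and the argument must instead track the type ($+$ versus $-$) of the smallest common invariant nondegenerate subspace, exactly as carried out in Examples~\ref{exmp:O2n(3)ex2} and~\ref{exmp:O2n(3)ex3}. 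This bookkeeping is what pins down the centralizer signs recorded in~(5c)--(5e) and simultaneously rules out the remaining type combinations (for instance a reflection paired with a type-(3) class, or two classes with totally singular eigenspaces).
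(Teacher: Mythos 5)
Your overall strategy --- reduce to $\CO_{2n}^\pm(3)$ since there are no field automorphisms, invoke Lemma~\ref{lem:invclasses}, match the surviving pairs against Lemma~\ref{lem:On(3)} and Examples~\ref{exmp:O2n(3)}--\ref{exmp:O2n(3)ex3}, and eliminate everything else by peeling off common invariant nondegenerate subspaces down to dimension $6$ or $8$ --- is essentially the paper's own (its cases A--E carry out exactly this programme, using $5$- and $8$-dimensional reductions for pairs of $\pm1$-eigenvalue classes and small-rank computations at the bottom). But there is one genuine error in your bookkeeping: you assert that a reflection paired with a type-(3) class (eigenvalues $\pm i$, lying \emph{inside} $\GO_{2n}^\pm(3)$, centralizer $\GU_n(3)$) is ruled out. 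It is not; it is a Baer--Fischer pair, and this is precisely what the example you yourself cite says: Example~\ref{exmp:O2n(3)} covers \emph{both} the case where the quadratic element preserves the form (centralizer $\GU_n(3)$, i.e.\ type (3)) and the case where it does not (centralizer $\SO_n^{\pm}(9)$, type (4)). The paper's proof treats types (3) and (4) in one stroke: if $c$ is the reflection in an anisotropic vector $v$ and $d$ has minimal polynomial $x^2+1$, then $W=\langle v,dv\rangle$ is $c$- and $d$-invariant and nondegenerate (a nonzero radical would be a $d$-invariant line, impossible since $x^2+1$ is irreducible over $\FF_3$), and $W$ has minus type because $\GO_2^+(3)$ contains no element of order~$4$; hence $\langle c,d\rangle$ embeds into $\GO_2^-(3)\times\langle d|_{W^\perp}\rangle$, a $2$-group. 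This holds for \emph{every} conjugate of $d$, so there is no contradiction for your descent to find: the base-case computations in dimension $6$ or $8$ would simply return $2$-power orders, and your classification would come out missing a family. (The mismatch may stem from reading item (5c) of Theorem~\ref{thm:p=2}, which records only the $\OO_n^{(\pm)}(9)$ centralizer; the paper's proof and Example~\ref{exmp:O2n(3)} place the $\GU_n(3)$ case alongside it.)

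Two smaller points. First, your other exclusions (two classes with totally singular eigenspaces, etc.) do agree with the paper's cases B, D and E, and your positive matchings for (5b), (5d), (5e) are the paper's. Second, you never say which case of Theorem~\ref{thm:p=2} the ``companion reflection configuration'' of Example~\ref{exmp:O2n(3)ex2} lands in: a reflection paired with a type-(2) class (centralizer $\GL_n(3)$) is the $q=3$ instance of the graph-automorphism cases (4b)/(4c), and in a complete proof this has to be recorded explicitly, not mentioned in passing.
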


\begin{proof}
Note that there are no field automorphisms so we are inside $\CO_{2n}^\pm(3)$.
We deal with the various possibilities.

\smallskip
A. First suppose $C$ and $D$ are both as in (1) of Lemma~\ref{lem:invclasses}.
\smallskip

If $C,D$ consist of nonconjugate reflections, we get case~(5b) by
Lemma~\ref{lem:On(3)}. Else, if $d \in D$ has an eigenvector $v$ with $dv=-v$
and $c \in C$ with $cw=-w$ with $v,w$ of the same norm, then we can choose a
nondegenerate $5$-space which is $c,d$ invariant (replacing by conjugates if
necessary) and check in $\GO_5(3)$ that $cd$ need not be a $2$-element.

The remaining case here is when all eigenvectors of $c \in C$ and $d \in D$
have distinct norms but at least one is not a reflection.  Again, we can find
a nondegenerate $5$-space where this happens and so there are no examples.

\smallskip
B. Next suppose that $C$ is as in (1) and $e \ge 3$.
\smallskip

We claim there are no such examples.  Note that any element $d$ in one of
the conjugacy classes in (2), (3) or (4) will preserve a nondegenerate
$8$-dimensional space of some type. Now choose $c$ preserving the same type
of $8$-space with the $-1$ eigenspace of dimension $3$ or $4$.  One computes
in $\GO_8^{\pm}(3)$ to see that it is not always the case that
$cd$ is a $2$-element.

\smallskip
C.  Suppose that $C$ is a reflection.
\smallskip

(i)  Suppose that $D$ is as in (2). Let $c \in C$ and let $U_1,U_2$ be the
eigenspaces for $d \in D$. Let $v$ be an eigenvector of $c$ with $cv=-v$ and
write $v = u_1 + u_2$ with $u_i \in U_i$.
Then we see that $v$ is contained in a $4$-dimensional nondegenerate invariant
subspace for $d$ (and necessarily $c$-invariant as well since it contains
the $-1$ eigenspace.)   Since the eigenspaces for $d$ are totally singular,
we are in $\GO_4^+(3)$ and it is easy to see that $cd$ is a $2$-element. These
are cases~(4a) and~(4b).

(ii)  Suppose that $D$ is as in (3) or (4).

Let again $v$ be such that $cv=-v$.  If $d \in D$, then $\langle v, dv
\rangle$ is $2$-dimensional and must be nondegenerate (since it is not
totally singular and $d$ acts irreducibly).  Thus
$\langle c, d \rangle < \CO_2^-(3) \times \CO_{2n-2}(3)$ and $c$ is trivial on
the $2n-2$ space. Computing in the $2$-group $\CO_2^-(3)$ shows this is an
example, giving (5c).

D.  Suppose that $C$ consists of bireflections.  There are two classes
of such differentiated by whether the $-1$ eigenspace has $+$ type or
$-$ type  (this determines the centralizer but this invariant does not
change when passing to a nondegenerate space containing the $-1$ eigenspace
of the bireflection). Note that if $2n > 8$, then $c$ acts as a bireflection
on $6$- or $8$-dimensional nondegenerate spaces of either type.

We have already taken care of $D$ as in (1).  So assume that $D$ consists
of elements as described in (2), (3) or (4).  Note that $d$ will preserve
a $4$-dimensional nondegenerate space of $+$ type in all cases.

It follows by Example~\ref{exmp:O2n(3)ex3} that the cases listed in the
theorem do occur. Moreover, the only possible choices for $D$ are as given in
the theorem: As we have already noted all involutions preserve a $4$-dimensional
nondegenerate space of $+$ type and so arguing as above, we reduce
to the case $2n=6$ or $8$ where we compute that $c^Gd^G$ does not consist
of $2$-elements.

E.  Neither $C$ nor $D$ is as in (1)

Now we want to show there are no examples. We argue precisely as in the last
paragraph of case D to reduce to the cases $2n=6$ or $8$ and compute that
$c^Gd^G$ does not consist of $2$-elements.
\end{proof}

Before treating the remaining cases, let's observe the following, which can
easily be deduced using \cite{Chv}:

\begin{lem}   \label{lem:exc2}
 Let $G$ be an exceptional group of adjoint Lie type of rank at least~4 in
 odd characteristic. Then all conjugacy classes of involutions have
 (non-central) representatives in a natural subgroup $H$ as listed in
 Table~\ref{tab:exc2}, where again $T$ denotes a 1-dimensional split torus.
\end{lem}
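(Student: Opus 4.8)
The plan is to argue exactly as in the proof of Lemma~\ref{lem:sub3}, replacing elements of order~$3$ by involutions and treating the five relevant types $F_4(q)$, $E_6(q)_\ad$, $\tw2E_6(q)_\ad$, $E_7(q)_\ad$ and $E_8(q)$ of $\rank\ge4$ in turn. For each of these I would feed the group into \Chevie~\cite{Chv} and read off the conjugacy classes of involutions together with the orders and types of their centralizers. The structural fact that drives the argument is that in odd characteristic every involution has order prime to~$p$, hence is semisimple and lies in a maximal torus, and its connected centralizer is a reductive subgroup of \emph{maximal rank}; consequently each involution class is of subsystem type. For the \emph{algebraic} group the assertion is then immediate: if $H$ is any maximal-rank subgroup containing a maximal torus $T$, every involution is conjugate into $T\subseteq H$. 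The real content is therefore the descent to the finite group: one must show that the candidate subgroup $H$ of Table~\ref{tab:exc2} can be chosen $F$-stable (with $H^F$ a genuine classical, Levi, or subsystem subgroup of $G$) so that $H^F$ meets every $G^F$-class of involutions, not merely every geometric class.

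Concretely, I would handle this class by class. A geometric involution class corresponds to a $W$-orbit on the $2$-torsion $T[2]$, and the various $G^F$-classes lying above it are distinguished by the $F$-conjugacy class of the maximal torus carrying a rational representative, equivalently by the finite type of the centralizer (e.g.\ orthogonal factors of $+$ versus $-$ type, or a $\GL$- versus $\GU$-form, depending on $q\pmod4$). The split classes are all met by the maximally split torus sitting inside the classical or Levi subgroup recorded in the table (for instance $D_8(q)$ in $E_8(q)$, or a subgroup with a $1$-dimensional split torus factor $T$ as in $E_7(q)_\ad$), since such a split torus has full $2$-torsion $(\ZZ/2)^{\rank G}$. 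For each of the finitely many remaining classes, whose rational representative requires a non-split torus, I would verify—using the \Chevie\ centralizer orders exactly as was done for the two exceptional classes of $E_6(q)_\ad$ in Lemma~\ref{lem:sub3}—that the relevant twisted maximal torus is again conjugate into $H^F$. Running this check for each group, and for each congruence class of $q$ that affects the rational types, completes the verification.

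The main obstacle is precisely this rationality bookkeeping: one must ensure that the chosen $H$ is an $F$-stable subsystem or Levi subgroup of the correct finite type, so that the congruence conditions on $q$ listed in Table~\ref{tab:exc2} are respected, and that the twisted torus orders output by \Chevie\ genuinely embed in $H^F$ rather than only in the ambient group $G^F$. This is exactly the point at which the congruence conditions arise and which forces the choice between, say, a subgroup with a split-torus factor and a subsystem subgroup of the other type, mirroring the dichotomy between $A_5(q)T$ and $F_4(q)$ in the $E_6$ row of Lemma~\ref{lem:sub3}. Once the correct maximal-rank subgroup $H$ and the correct congruence condition have been pinned down for each type, the remaining work is the routine comparison of centralizer and maximal-torus orders, carried out with \Chevie\ just as before.
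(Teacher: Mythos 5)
Your plan is essentially the paper's proof: the paper also observes that for $F_4(q)$ and $E_8(q)$ every involution class meets a maximally split torus (which is conjugate into $B_4(q)$, resp.\ $D_8(q)$), and for the remaining types it settles the rational-class bookkeeping by computing with \Chevie\ the involution classes, the component groups of their centralizers, and the fusion into $G$ --- the same computation you propose, just run from $H$ upward rather than from $G$ downward.

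One correction, though: Table~\ref{tab:exc2} carries \emph{no} congruence conditions on $q$, and the dichotomy you anticipate (a Levi with split torus factor versus a subsystem subgroup, as in the $E_6$ row of Table~\ref{tab:exc3}) does not occur here; $F_4(q)$ works for both $E_6(q)_\ad$ and $\tw2E_6(q)_\ad$ for all odd $q$. The reason the order-$3$ analogue needed such conditions is that the $3$-torsion of a split torus $T_0^F$ is full only when $q\equiv1\pmod3$, whereas its $2$-torsion is all of $(\ZZ/2)^{\rank G}$ for every odd $q$; so the congruence phenomenon is genuinely a $p=3$ effect and your "rationality bookkeeping", carried out against the actual table, simply confirms the uniform choices of $H$. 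This slip does not invalidate your method --- executing your class-by-class \Chevie\ check against the stated subgroups verifies the lemma as written.
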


\begin{table}[htbp]
 \caption{Subgroups intersecting all involution classes}   \label{tab:exc2}
\[\begin{array}{|c|lllll|}
\hline
   G& F_4(q)& E_6(q)_\ad& \tw2E_6(q)_\ad& E_7(q)_\ad& E_8(q)\\
\hline
   H& B_4(q)& F_4(q)& F_4(q)& D_6(q)T& D_8(q)\\
\hline
\end{array}\]
\end{table}

\begin{proof}
For the groups $F_4(q)$ and $E_8(q)$ all involution classes already have
representatives in a maximally split torus, a conjugate of which is contained
inside $H$. For the remaining types, the involution classes in $H$, the
component group of their centralizers and their fusion into $G$ can be
computed using the relevant \Chevie-commands. The claim follows by
inspection.
\end{proof}

\begin{prop}
 Theorem~\ref{thm:p=2} holds when $S=F^*(G)$ is of exceptional Lie type in
 odd characteristic $p$.
\end{prop}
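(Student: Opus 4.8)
The plan is to separate the analysis by the characteristic, and within characteristic~$3$ by the Lie rank, using Lemma~\ref{lem:exc2} as the engine for the groups of rank at least~$4$. The point of that lemma is that every involution class of $G$ meets the natural subgroup $H$ of Table~\ref{tab:exc2} in a non-central element, so any Baer--Fischer pair $(C,D)$ in $G$ restricts to one in $H$: if $c\in C\cap H$ and $d\in D\cap H$ are non-central, then $\langle c,d^h\rangle\le H$ is a $2$-group for every $h\in H$, whence the pair of $H$-classes $(c^H,d^H)$ already satisfies the hypothesis. I would therefore first dispose of $F_4(q)$ using $H=B_4(q)$, then feed the conclusion for $F_4$ into the cases $E_6(q)$ and $\tw2E_6(q)$ (where $H=F_4(q)$), and finally treat $E_7(q)$ and $E_8(q)$ through $H=D_6(q)T$ and $H=D_8(q)$.

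For characteristic $p\ge5$ this disposes of the rank-$\ge4$ groups at once. By Proposition~\ref{prop:classodd} and the subsequent propositions on even-dimensional orthogonal groups, the classical groups $B_4(q)$, $D_6(q)$ and $D_8(q)$ carry no Baer--Fischer involution pair once $q$ is odd with $q\ne3,9$; moreover a pair of field automorphisms would force $q_0^2-1$ to be a $2$-power, that is $q=9$, which is again of characteristic~$3$. Thus for $p\ge5$ the restriction to $H$ is impossible, a contradiction. The graph automorphism of $E_6$ (resp.\ $\tw2E_6$) is removed by passing to its centralizer $F_4(q)$ or $\Sp_8(q)$, both already settled, and an order-$2$ field automorphism is removed by descending to a subfield subgroup via \cite[Lemma~8.6]{GMT}. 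This leaves only the groups of characteristic~$3$.

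In characteristic~$3$ the subgroups $H$ over $\FF_3$ (and, through case~(5f), over $\FF_9$) genuinely do contain the classical pairs of cases~(5a)--(5f), so the reduction to $H$ no longer closes the argument; this is the heart of the matter. For $q\ge27$ the classical group $H(\FF_q)$ still has no pairs, so after removing field automorphisms by subfield descent one is reduced to the exceptional groups over $\FF_3$ and $\FF_9$. The small groups $G_2(3)$, $\tw3D_4(3)$, $\tw2G_2(3)$ and $F_4(3)$, together with $E_6(3)$ and $\tw2E_6(3)$ (the latter via the graph-centralizers $F_4(3)$ and $\Sp_8(3)$), I would settle by direct computation of structure constants with \Chevie{}~\cite{Chv} and \GAP{}~\cite{GAP}, exactly as in the earlier characteristic-$3$ analysis; here $\tw2G_2(3^a)$ has a single class of involutions and no outer involutions, so it falls to the ordinary Baer--Suzuki theorem, and $\tw3D_4$ reduces to $G_2$ since its involutions have representatives in the subgroup $G_2(q)$.

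The genuinely hard part will be $E_7(3)$ and $E_8(3)$, which are far too large for a direct character-table calculation. Here I would exploit that these groups possess only very few involution classes --- two in $E_8(q)$, with centralizers of types $D_8$ and $A_1E_7$, and a short list in $E_7(q)$. After the reduction above, $c$ and $d$ each lie in one of these finitely many classes, so only a handful of class-pairs remain to be examined. If $c$ and $d$ fall in the same $G$-class, the ordinary Baer--Suzuki theorem applied to $C=c^G$ forces $\langle c^G\rangle$, which contains the non-abelian simple socle $S$, to be a $2$-group, a contradiction. For each remaining cross-pair I would produce a single $g\in G$ with $cd^g$ of order divisible by~$3$ by restricting to a subsystem subgroup of rank at most~$3$ --- of type $A_1$, $A_1A_1$ or $A_2$ --- inside which the product visibly lies in a copy of $\SL_2(3)$, $\SO_4^{\pm}(3)$ or $\SL_3(3)$ containing elements of order~$3$. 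The idea is that conjugation inside the full exceptional group escapes the orthogonal-space geometry of $D_8(3)$ (resp.\ $D_6(3)T$) that forced the classical products of cases~(5) to be $2$-elements, so those classical pairs do not extend. The main obstacle is precisely to verify that a suitable subsystem and conjugating element always exist, which I expect to demand a careful but finite analysis of the fusion of the $D_8(3)$- and $D_6(3)T$-involutions into the two, respectively few, ambient classes of $E_8(3)$ and $E_7(3)$.
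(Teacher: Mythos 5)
Your treatment of field automorphisms (descent to the fixed-point subgroup), of the graph automorphisms of $^{(2)}E_6(q)$ (reduction into the centralizers $F_4(q)$ and $C_4(q)$), and of inner-diagonal pairs in the rank-$\ge4$ groups via Lemma~\ref{lem:exc2} is exactly the paper's strategy, and it settles every $q$ that is not a power of~$3$. (You do omit the graph-field automorphisms of $^{(2)}E_6(q)$, which the paper reduces to the centralizer $F_4(q)$ by \cite[Prop.~4.9.1(d)]{GLS}, but that is the same mechanism you already use.) The genuine gap is your characteristic-$3$ endgame. You declare that over $\FF_3$ the reduction to $H$ ``no longer closes the argument'' and substitute two things for it: structure-constant computations in $F_4(3)$, $E_6(3)$, $\tw2E_6(3)$, and a sketch for $E_7(3)$, $E_8(3)$ that you yourself flag as unverified (``the main obstacle is precisely to verify that a suitable subsystem and conjugating element always exist''). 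Neither is a proof: the character tables of $F_4(3)$, $E_6(3)$, $\tw2E_6(3)$ are not available in \GAP{} or \Chevie{}, so the proposed computations cannot be carried out (the paper never attempts them), and the $E_7(3)$/$E_8(3)$ plan is an unfinished program. So precisely the cases you call ``the heart of the matter'' remain unproved.

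Moreover, the premise is a misdiagnosis: the inductive reduction does close the argument over $\FF_3$, which is why the paper treats all odd $q$ uniformly. What you are not using is that Lemma~\ref{lem:exc2} (via its \Chevie{} computation) records not just the existence of non-central representatives but the fusion of the $G$-classes into $H$-classes, and a putative pair $(C,D)$ in $G$ forces \emph{every} pair from $(C\cap H)\times(D\cap H)$ to be a Baer--Fischer pair of $H$, hence to appear among the already-established classical cases of Theorem~\ref{thm:p=2}; the pairs that do exist over $\FF_3$ have the wrong shape for this. Concretely, in $F_4(3)$ with $H=B_4(3)$: the unique classical pair for $\OO_9(3)$ is the pair of non-conjugate reflection classes of case~(5b), i.e.\ (modulo the center) the elements with $8$-dimensional $(-1)$-eigenspace of the two types; both of these have trace $20$ on the $52$-dimensional module and therefore fuse into the single $F_4$-class with centralizer $B_4$, so they would have to come from one class $C=D$, which the Baer--Suzuki theorem forbids, while the actual restriction of the pair ($B_4$-class, $A_1C_3$-class) meets $H$ in elements with $8$- and $4$-dimensional eigenspaces, which is not a classified pair. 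Likewise in $E_8(3)$ with $H=D_8(3)$ a half-spin group: every involution of $H$ projects to an element of $\SO_{16}(3)$ whose $(-1)$-eigenspace has dimension divisible by~$4$, or which has eigenvalues $\pm i$; but the unique inner-diagonal pair for $D_8(3)$, case~(5d), requires a bireflection, whose $(-1)$-eigenspace is $2$-dimensional and which does not even lift to an involution of $H$. So no pair of $E_8(3)$-classes can restrict to a classified pair, and the same comparison disposes of $E_7(3)$, $E_6(3)$ and $\tw2E_6(3)$ with no large-scale computation at all.
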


\begin{proof}
If one of $c$, $d$ induces a field automorphism on $F^*(G)$, then we may
reduce to its fixed point group by the standard argument, using that all
field automorphisms of order~2 are conjugate (see \cite[Prop.~4.9.1(d)]{GLS})
and that the centralizer contains representatives from all involution
classes. \par
If $c$ induces a graph automorphism, then for $F^*(G)=G_2(3^{2m+1})$ we
check explicitly in $G_2(3)$. Now assume that $F^*(G)={}^{(2)}E_6(q)$. There
are two classes of graph automorphisms, with centralizer $F_4(q)$ respectively
$C_4(q)$ in $F^*(G)$. Both contain representatives from both classes of inner
involutions. When $C_{F^*(G)}(c)=F_4(q)$ then $F_4(q)\times\langle c\rangle$
also contains non-central conjugates of $c$, since $F_4(q)$ has involutions
with centralizer $B_4(q)$. Thus by induction we do not get examples. A similar
argument applies when $C_{F^*(G)}(c)=C_4(q)$.   \par
The groups $^2G_2(3^{2m+1}),G_2(q),\tw3D_4(q)$ have a single class of
involutions. For the groups of rank at least~4 and pairs of inner-diagonal
elements we use Lemma~\ref{lem:exc2} and induction. We are thus left with
$^{(2)}E_6(q)$ and at least one of $c,d$ a graph-field automorphism. Then
again by \cite[Prop.~4.9.1(d)]{GLS} we can reduce to the centralizer
$F_4(q)$ where there are no examples by the above.
\end{proof}

This completes the discussion of all cases and hence the proof of
Theorem~\ref{thm:p=2}.

\begin{exmp}
If in Theorem~\ref{thm:p=2} we allow arbitrary classes of non-trivial
2-elements, there will be additional examples. In case~(3) we may take
pseudo-reflections of arbitrary 2-power order; and for $\PSU_{2n}(3)$
with $n$ odd in case~(5a), we get examples with one class containing elements
of order~4. In addition to those, we are aware of examples in several further
simple groups $S$. These are $S=\fA_6$ with the following six pairs
$$\{2a,4b\},\{2a,8a\},\{2a,8b\},\{4a,4b\},\{4a,8a\},\{4a,8b\}$$
for $M_{10}=\fA_6.2_3$ (notation as in {\sf GAP}) and
$$\{2b,8a\},\{4b,2c\},\{4b,8a\}$$
for $\Aut(\fA_6)$; several pairs of classes in $\PSL_2(81).4$ with products of
orders 8 and~16; a pair of classes of elements of orders 2 and~8 in
$\PSL_3(4).2^2$ for which all products have order either~4 or~8; and a pair of
classes  of elements of orders~2 and~4 in $\PSU_4(3).D_8$ for which the
product is a single class of elements of order~8. \par
There are no such examples in the symmetric group $\fS_n$: Since a
transposition is not the square of any element, we may assume that $c$ is a
transposition.
Suppose that $d^2$ is a fixed point free involution.  So $n \ge 8$ and it
suffices to consider that case. Indeed, we can reduce to the case $\fS_4$
with $d$ a $4$-cycle. We see that $\langle c, d^g \rangle$ can be $\fS_4$,
whence the claim.
\end{exmp}

\section{Commutators}  \label{sec:commut}

In this section we will prove the main case of Theorem~\ref{thm:mainB}.
If $x,y$ are elements of a group $G$, let $[x,y]=x^{-1}y^{-1}xy$ denote the
commutator of $x$ and $y$.

\begin{thm}   \label{thm:main1}
 Let $p$ be a prime and $G$ a finite group. Let $C$ be a normal subset of $G$
 consisting of $p$-elements. If $p=5$, assume that $C$ is closed under
 squaring. If $C$ is closed under taking commutators, then
 $\langle C \rangle \le O_p(G)$.
\end{thm}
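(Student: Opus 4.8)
The plan is to induct on $|G|$, working with a minimal counterexample and reducing to the case where $F^*(G)$ is simple, where the classification of finite simple groups enters. First I would observe that the three conditions on $C$ (consisting of $p$-elements, being normal and closed under commutators, and, when $p=5$, being closed under squaring) all pass to homomorphic images and to the normal subgroup $\langle C\rangle$. Hence, replacing $G$ by $\langle C\rangle$ and using that $O_p(\langle C\rangle)\le O_p(G)$, I may assume $G=\langle C\rangle$; since the conclusion only concerns $O_p(G)$, I may also assume $O_p(G)=1$ and aim to prove $C=\{1\}$. A minimal normal subgroup $N$ is then unique: if $M\ne N$ were a second one, the inductive hypothesis applied to $G/M$ and $G/N$ would make $\langle C\rangle$ a $p$-group via the embedding $G\hookrightarrow G/M\times G/N$, forcing $\langle C\rangle=1$, a contradiction.

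Next I would split according to whether $N$ is abelian. Suppose $N$ is an elementary abelian $q$-group (with $q\ne p$ since $O_p(G)=1$). Applying the inductive hypothesis to $G/N$ shows that $G/N=\langle C\rangle N/N$ is a $p$-group, whence $N=O_q(G)=F(G)$ is self-centralizing. The key computation is the identity $[c,c^g]=[c,[c,g]]$, valid for any $c\in C$ and $g\in G$: the left-hand side lies in $C$ and so is a $p$-element. Taking $g=n\in N$, the element $[c,[c,n]]$ lies in the $q$-group $N$ and is a $p$-element, hence is trivial; thus $(c-1)^2=0$ on $N$. As $c$ is a $p$-element acting on a module in the coprime characteristic $q$, its action is semisimple, so being quadratic forces $c$ to centralize $N$. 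Then $c\in C_G(N)=N$ is a $p$-element of a $q$-group, so $c=1$ and $C=\{1\}$, a contradiction.

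It remains to treat $N=F^*(G)=L_1\times\cdots\times L_t$ with the $L_i$ nonabelian simple and $C_G(N)=1$. Here I would first analyse the permutation action on the set of components: its image is generated by a commutator-closed normal set of $p$-elements and is a proper quotient of $G$ (the kernel contains $N$), so by induction it is a transitive $p$-group and $t$ is a power of $p$. One then descends to a single component, where $G$ becomes almost simple with socle $L$, and the task becomes the classification of commutator-closed normal sets of nontrivial $p$-elements in almost simple groups. Using the classification of finite simple groups---via structure-constant computations in the small cases and uniform arguments (in the spirit of the earlier sections and of \cite{GMT}) producing, for a suitable conjugate, a commutator $[c,c^g]=[c,[c,g]]$ of order divisible by a prime other than $p$---I expect the only surviving possibility to be $(L,p)=(\fA_5,5)$.

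Finally I would dispose of this exceptional configuration using the squaring hypothesis: in $\fA_5$ the two classes of elements of order~$5$ are interchanged by $x\mapsto x^2$, so a set closed under squaring that contains a $5$-element contains all twenty-four elements of order~$5$; but among these there are pairs whose commutator has order $2$ or~$3$, violating closure under commutators. Hence $C$ contains no $5$-element, the exceptional case cannot arise, and no counterexample exists. The main obstacle is exactly the almost simple case: proving, group by group across the simple groups, that no nontrivial commutator-closed normal set of $p$-elements exists apart from the $\fA_5$ example, together with the bookkeeping required to pass from several permuted components down to a single one.
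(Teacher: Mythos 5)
The decisive gap is the almost simple case, which you correctly identify as the main obstacle and then simply assert away (``I expect the only surviving possibility to be $(\fA_5,5)$''). This is not a peripheral verification — it is the actual content of the theorem, and it occupies essentially all of the paper's proof. The paper's engine for it consists of tools your sketch never produces: by Wielandt's subnormality criterion, each $x\in C$ lies in a \emph{unique} maximal subgroup $M$ of $G$, with $C\cap M\subseteq O_p(M)$; from this one deduces that every involution of $G$ inverts a nontrivial element of $C$; the case $p=2$ is excluded via Aschbacher's classification of simple groups whose Sylow $2$-subgroup lies in a unique maximal subgroup; in cross characteristic the unique-maximal-overgroup property forces nontrivial elements of $C$ to be regular semisimple and outside every parabolic, after which the rank-one groups $\PSL_2(q)$, $\PSU_3(q)$, $\tw2B_2(q^2)$, $\tw2G_2(q^2)$ require bespoke counting and trace-polynomial arguments, $\tw3D_4(q)$ a structure-constant computation, large-rank exceptional groups involution-centralizer dimension bounds, and the sporadic groups Aschbacher's memoir on overgroups of Sylow subgroups. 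Your proposed mechanism — a ``uniform argument'' producing some $g$ with $[c,c^g]=[c,[c,g]]$ not a $p$-element — is the right target (it matches the paper's Lemmas on noncommuting conjugates and inversion by involutions), but you supply no method for producing such a $g$ in even one family of simple groups, so the proof is missing exactly where the theorem lives.

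Two parts of your sketch are correct and genuinely close to the paper's ideas: the abelian case, handled via $[c,c^g]=[c,[c,g]]$ together with quadratic/coprime action (the paper uses the same coprime-action device in its $p=5$ analysis), and the final elimination of $\fA_5$ using the squaring hypothesis (both classes of $5$-cycles must lie in $C$, and commutators of $5$-cycles from different classes can have order $2$ or $3$). But note that the paper sidesteps your entire minimal-normal-subgroup analysis with a trick you missed: replacing $C$ by the set $D$ of commutators of pairs of elements of $C$, minimality of $|G|+|C|$ forces $D=C$, so every element of $C$ is a commutator and $G=\langle C\rangle$ is \emph{perfect}; then for a minimal normal subgroup $N$, the quotient $G/N$ is simultaneously perfect and, by induction, a $p$-group, whence $G=N$ and $G$ is simple. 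This one observation eliminates both your abelian case and your multi-component case at a stroke. By contrast, your ``descent to a single component'' is not routine bookkeeping: elements of $C$ need not normalize any component, and $C$ is not closed under the power maps one would want; repairing it requires an argument like the paper's for $p=5$, using the nontriviality of $N_N(R)/R$ (for $R$ a Sylow $p$-subgroup of $N$) and coprime action to manufacture a nontrivial commutator in $C$ of order prime to $p$.
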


For the proof we proceed in a series of lemmas. Let $G$ be a counterexample
with $|G| +|C|$ minimal. Clearly
$G = \langle C \rangle$ and $O_p(G)=1$. Moreover, we may assume that every
element of $C$ is a commutator of elements in $C$ and so $G$ is perfect
(otherwise replace $C$ by the set $D$ of commutators of pairs of elements
in $C$; then by minimality $D$ generates a $p$-group and so $D$ is trivial,
whence the group is abelian).

\begin{lem}   \label{lem:inverses}
 $C$ is closed under inverses.
\end{lem}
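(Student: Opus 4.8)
The plan is to exploit the reduction already established for the minimal counterexample, namely that we may assume every element of $C$ is itself a commutator of two elements of $C$. Together with the standing hypothesis that $C$ is closed under taking commutators, this should make the assertion almost immediate.

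First I would record the elementary identity $[a,b]^{-1}=[b,a]$, valid for any $a,b$ in a group under the convention $[x,y]=x^{-1}y^{-1}xy$ fixed in the text: with $x=a^{-1},y=b^{-1},z=a,w=b$ one has $(a^{-1}b^{-1}ab)^{-1}=b^{-1}a^{-1}ba=[b,a]$. Now let $c\in C$ be arbitrary. By the reduction above we may write $c=[a,b]$ for suitable $a,b\in C$. Then $c^{-1}=[a,b]^{-1}=[b,a]$, and since $b,a\in C$ and $C$ is closed under commutators, we conclude $c^{-1}=[b,a]\in C$. As $c$ was arbitrary, $C$ is closed under inverses.

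There is essentially no obstacle here: the entire content is carried by the preparatory reduction to the perfect case, after which the inverse-of-a-commutator identity does all the work. The only points I would be careful about are to invoke the correct sign convention for commutators, and to note explicitly that it is precisely the reduction guaranteeing every element of $C$ to be a commutator which licenses writing $c=[a,b]$ with both factors in $C$.
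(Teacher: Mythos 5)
Your proof is correct and is exactly the paper's argument: the paper's one-line proof, ``$[x,y][y,x]=1$ for all $x,y\in C$,'' encodes precisely your combination of the identity $[a,b]^{-1}=[b,a]$ with the standing reduction that every element of $C$ is a commutator of elements of $C$ and the closure of $C$ under commutators. No difference in substance, only in the amount of detail spelled out.
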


\begin{proof}
We have $[x,y][y,x]=1$ for all $x,y\in C$.
\end{proof}

\begin{lem}   \label{lem:unique}
 Each $x\in C$ lies in a unique maximal subgroup $M$
 and $C \cap M \subseteq O_p(M)$.
\end{lem}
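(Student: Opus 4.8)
The plan is to work inside the minimal counterexample $G=\langle C\rangle$ with $O_p(G)=1$, where $G$ is perfect and every element of $C$ is a commutator of elements of $C$. I want to show that each $x\in C$ lies in a unique maximal subgroup. The natural strategy is to fix $x\in C$ and study how $x$ interacts with the maximal subgroups containing it. First I would observe that since $x$ is a commutator $x=[u,v]$ with $u,v\in C$, and $C$ is normal and closed under commutators, the set $C$ has strong fusion-control properties: for any $g\in G$ and any $x\in C$, the commutator $[x,x^g]$ lies in $C$ (after adjusting signs using Lemma~\ref{lem:inverses}), so conjugates of $x$ generate a great deal of $C$. The key point to exploit is that if $x$ lay in two distinct maximal subgroups $M_1\ne M_2$, then $\langle M_1,M_2\rangle=G$ by maximality, and I would try to derive a contradiction with $O_p(G)=1$ or with perfectness.

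The central mechanism I expect to use is the following dichotomy. Consider the subgroup $H=\langle C\cap M\rangle$ generated by the elements of $C$ inside a maximal subgroup $M$ containing $x$. Because $C$ is normal in $G$, the set $C\cap M$ is a normal subset of $N_G(M)\supseteq M$, and it is closed under commutators (as $C$ is and $M$ is a subgroup). By the \emph{minimality} of the counterexample applied to the pair $(M, C\cap M)$—noting $|M|<|G|$—we conclude that $\langle C\cap M\rangle\le O_p(M)$, which is exactly the second assertion $C\cap M\subseteq O_p(M)$ of the lemma. This handles the containment claim directly from minimality, \emph{provided} the hypotheses descend: the subset $C\cap M$ consists of $p$-elements, is normal in $M$, closed under commutators, and (if $p=5$) closed under squaring, all of which are inherited. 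So the second statement is essentially immediate once uniqueness is in hand, or can be proved in parallel.

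For \emph{uniqueness}, the hard part, I would argue by contradiction. Suppose $x$ lies in two distinct maximal subgroups $M_1,M_2$. Then $x\in O_p(M_1)\cap O_p(M_2)$ by the containment just established. The plan is to show that $x$ must then centralize or normalize a configuration forcing a nontrivial normal $p$-subgroup of $G$, contradicting $O_p(G)=1$. Concretely, I would look at the normal closure considerations: since $x\in O_p(M_i)$, $x$ normalizes $O_p(M_i)$, and one expects that the interaction of $x$ with its $G$-conjugates—controlled by the commutator-closure of $C$—confines the normal closure of $x$ to a $p$-group. A Baer--Suzuki type argument is the right tool here: if for all $g$ the relevant generated subgroups were $p$-groups we would be done by the classical theorem, and the commutator hypothesis is precisely the substitute that should let an analogous local-to-global passage go through.

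The main obstacle is establishing uniqueness rigorously: ruling out two maximal overgroups requires controlling the subgroup $\langle x, x^g\rangle$ or the normal closure $\langle x^G\rangle$ using only the commutator-closure of $C$ rather than the stronger $p$-group hypothesis of Baer--Suzuki. I expect this to hinge on a careful analysis showing that distinct maximal subgroups containing $x$ would produce an element of $C$ outside every $O_p$ of a proper subgroup, contradicting minimality, or would force $G$ to have a proper normal subgroup. This is where the almost-simple reduction (handled separately in the paper via the classification and the $\fA_5$, $p=5$ exception) ultimately enters, but at the level of this lemma the essential difficulty is purely the local uniqueness argument, and I would treat the second displayed containment as the easy consequence of minimality and focus the work on the uniqueness of $M$.
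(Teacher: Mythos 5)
Your handling of the containment $C\cap M\subseteq O_p(M)$ is correct and is exactly what the paper does (implicitly): $C\cap M$ is a normal subset of $M$ consisting of $p$-elements, closed under commutators (and under squaring when $p=5$), so minimality of the counterexample applied to the pair $(M, C\cap M)$ gives $\langle C\cap M\rangle\le O_p(M)$. Note, however, that this containment is not a consequence of uniqueness, nor proved ``in parallel''; it is the \emph{input} to the uniqueness argument.

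For uniqueness itself your proposal has a genuine gap: everything after ``suppose $x$ lies in two distinct maximal subgroups'' is a description of what a proof ought to accomplish, not a proof. The missing idea is Wielandt's subnormality criterion (his ``zipper''-type result, cited as \cite{W} in the paper). The argument is short and purely local: by the same minimality argument, $x\in O_p(K)$ for \emph{every} proper subgroup $K$ containing $x$, hence $\langle x\rangle$ is subnormal in every proper subgroup containing it (being a subgroup of the normal nilpotent subgroup $O_p(K)$). Wielandt's theorem then says that either $\langle x\rangle$ is subnormal in $G$ itself --- in which case $x\in O_p(G)=1$, a contradiction --- or there is a unique maximal subgroup of $G$ containing $\langle x\rangle$. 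This is the entire proof; no Baer--Suzuki-type analysis of $\langle x,x^g\rangle$ is needed, and your suggestion that the uniqueness ``ultimately'' rests on the almost-simple reduction and the classification is backwards: this lemma is one of the elementary reduction steps that make the later classification-based lemmas (Lemmas~\ref{lem:p odd}--\ref{lem:spor}) possible, so it must be (and is) proved without them.
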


\begin{proof}
Since $x \in O_p(M)$ for each maximal subgroup $M$ containing $x$, we have that
$\langle x \rangle$ is subnormal in $M$. By a result of Wielandt \cite{W}
this implies that either $M$ is unique or $\langle x \rangle$ is subnormal
in $G$, whence $x \in O_p(G)$, a contradiction.
\end{proof}

\begin{lem}   \label{lem:simple}
$G$ is simple.
\end{lem}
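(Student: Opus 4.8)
The plan is to show that the minimal counterexample $G$ (a perfect group with $G=\langle C\rangle$, $O_p(G)=1$, each element of $C$ a commutator of elements of $C$, and $C$ closed under inverses by Lemma~\ref{lem:inverses}) must be simple by ruling out any proper nontrivial normal subgroup. The key structural tool is Lemma~\ref{lem:unique}: each $x\in C$ lies in a \emph{unique} maximal subgroup $M_x$, and $C\cap M_x\subseteq O_p(M_x)$. First I would let $N$ be a minimal normal subgroup of $G$. By minimality applied to the image $\bar C$ of $C$ in $G/N$ (which is again a normal set of $p$-elements closed under commutators, and if $p=5$ closed under squaring), $\langle\bar C\rangle\le O_p(G/N)$. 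Since $G=\langle C\rangle$ is perfect, I would argue $G/N$ must be a $p$-group, and because $G/N$ is perfect this forces $G=N$ once I show $N$ is the \emph{only} minimal normal subgroup; uniqueness follows as in the main proof because $G$ embeds into the product of its quotients by distinct minimal normal subgroups, contradicting that $\langle C\rangle$ is not a $p$-group.

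Having reduced to a unique minimal normal subgroup $N$, the remaining task is to exclude $N$ being a proper subgroup, i.e. to exclude $N\neq G$. The mechanism I expect to exploit is the uniqueness of maximal subgroups from Lemma~\ref{lem:unique}. If $N\neq G$, take $x\in C\setminus N$ (such $x$ exists since $G=\langle C\rangle\neq N$) and consider the unique maximal subgroup $M=M_x$ containing $x$. Because $N$ is normal and minimal, either $N\le M$ or $NM=G$. The point is that $N$ normalizes $\langle x\rangle$'s subnormal structure and, via the Wielandt-type constraint already invoked, I would derive that $N$ must normalize $M$ or be contained in it, eventually forcing $M$ to be normalized by enough of $G$ to contradict maximality or the uniqueness. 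Concretely, I would show that for $n\in N$, the conjugate $x^n$ lies in a unique maximal subgroup $M^n=M_{x^n}$, and since $N$ is normal the set of such maximals is permuted by $N$; combined with $C\cap M\subseteq O_p(M)$ this pins down the fusion of $C$-elements inside $N$-cosets.

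The main obstacle, and the step I would spend the most care on, is ruling out the case where $N$ is a proper normal subgroup with $G/N$ nontrivial: one must prevent the configuration in which $C$ meets both $N$ and $G\setminus N$ nontrivially while remaining closed under commutators. The cleanest route is to observe that if $y\in C\cap N$ and $x\in C\setminus N$, then $[x,y]\in C\cap N$ as well (since $N$ is normal), so $C\cap N$ is itself a normal set in $G$ consisting of $p$-elements, closed under commutators, and $\langle C\cap N\rangle\trianglelefteq G$. By minimality of $|G|+|C|$, applied either to $N$ with the subset $C\cap N$ or to the quotient, I would force $\langle C\cap N\rangle\le O_p(G)=1$, so $C\cap N=\{1\}$; but then every nontrivial element of $C$ lies outside $N$, and the commutator $[x,x^g]\in N$ for $g\in N$ forces $C$-elements to centralize $N$ modulo a $p$-group, contradicting that $N$ is the unique minimal normal subgroup with $C_G(N)$ trivial or abelian.

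Thus the only surviving possibility is $N=G$, and since $N$ is a minimal normal subgroup of the perfect, $O_p$-trivial group $G$, it is a direct product of isomorphic nonabelian simple groups; a further appeal to the uniqueness of maximal subgroups (Lemma~\ref{lem:unique}) and the fact that $G=\langle C\rangle$ with $C$ a single fused set then collapses the direct product to a single factor, giving that $G$ is simple. I expect the delicate part to be this last collapse: one must use that an element $x\in C$ projecting nontrivially to two simple factors would lie in the intersection of two distinct maximal subgroups (the point stabilizers of the factor-action), contradicting uniqueness, so $C$ can only generate a single factor — forcing $t=1$ and hence $G$ simple.
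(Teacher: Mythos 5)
Your first paragraph already contains the paper's entire proof, which is a one-liner: by minimality of the counterexample, $\langle\bar C\rangle\le O_p(G/N)$, so $G/N=\langle\bar C\rangle$ is a $p$-group; since $G$ is perfect, so is $G/N$; and a nontrivial $p$-group is never perfect (being nilpotent, it has a nontrivial abelianization), so $G/N=1$, i.e.\ $G=N$. Since $N$ was chosen to be a \emph{minimal} normal subgroup, $G=N$ says precisely that $G$ has no proper nontrivial normal subgroup, i.e.\ $G$ is simple. In particular, neither the uniqueness of $N$ (your stated prerequisite ``once I show $N$ is the only minimal normal subgroup'' is a red herring), nor a separate exclusion of $N\neq G$, nor any collapse of a direct product $L_1\times\cdots\times L_t$ is needed: if $t>1$, each factor would be a proper nontrivial normal subgroup of $G$, contradicting the minimality of $N=G$ directly.

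The flaw is that you do not draw this conclusion, and instead declare that ``the remaining task is to exclude $N$ being a proper subgroup.'' That task is vacuous, and the arguments you offer for it in your second and third paragraphs would not stand as proofs if they were actually needed: phrases like ``I would derive that $N$ must normalize $M$ or be contained in it, eventually forcing\dots'' and the unexplained contradiction ending paragraph three are not deductions, and Lemma~\ref{lem:unique} together with Wielandt's theorem is never actually assembled into an argument there. So, read with the logical structure you yourself assign to it --- where simplicity rests on those paragraphs --- the proposal has a genuine gap; but the gap is repaired instantly by the observation that perfect $p$-groups are trivial, which your first paragraph all but states. Once you see that, everything after your first paragraph should simply be deleted (your fourth-paragraph device of exhibiting two distinct maximal overgroups of an element of $C$ to contradict Lemma~\ref{lem:unique} is valid, but superfluous).
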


\begin{proof}
Suppose not. Let $N$ be a minimal normal subgroup.
By induction, $G/N$ is a $p$-group but also perfect, whence $G=N$ is simple.
\end{proof}

\begin{lem}   \label{lem:p odd}
$p \ne 2$.
\end{lem}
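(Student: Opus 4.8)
Assume for contradiction that $p=2$. By Lemmas~\ref{lem:inverses}--\ref{lem:simple}, $G$ is nonabelian simple, $O_2(G)=1$, $G=\langle C\rangle$, and $C$ is a normal set of $2$-elements closed under commutators and inverses, with every element of $C$ a commutator of elements of $C$. The first thing I would establish is a \emph{dichotomy}: for every subset $S\subseteq C$, either $\langle S\rangle$ is a $2$-group or $\langle S\rangle=G$. Indeed, setting $H=\langle S\rangle$, the derived subgroup $H'=\gamma_2(H)$ is generated by the iterated commutators of the elements of $S$; since $C$ is closed under commutators these all lie in $C\cap H'$, which is thus a normal subset of $H'$ that generates $H'$, consists of $2$-elements, and is closed under commutators. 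If $H\ne G$ then $|H'|+|C\cap H'|<|G|+|C|$, so minimality of the counterexample (no squaring hypothesis is needed when $p=2$) gives $H'=\langle C\cap H'\rangle\le O_2(H')$; hence $H'$ is a $2$-group, and as $H/H'$ is abelian and generated by $2$-elements, $H$ is a $2$-group.

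Combining this with Lemma~\ref{lem:unique} yields a rigid structure. If $x,y\in C\setminus\{1\}$ and $\langle x,y\rangle\ne G$, then $\langle x,y\rangle$ is a proper $2$-group, so it lies in a maximal subgroup, which by the uniqueness in Lemma~\ref{lem:unique} must be the common maximal subgroup $M(x)=M(y)$. Thus $x\mapsto M(x)$ is $G$-equivariant with fibres exactly $(C\cap M)\setminus\{1\}=(C\cap O_2(M))\setminus\{1\}$: two elements of $C$ generate a $2$-group iff they share this maximal subgroup, and generate $G$ otherwise. Since $G=\langle C\rangle$ is contained in no maximal subgroup, some pair in $C$ must generate $G$.

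Next I would rule out involutions in $C$, which is the clean case. If $t\in C$ is an involution, then for every $g$ the element $t^g\in C$ is an involution and $[t,t^g]=(tt^g)^2\in C$ is a $2$-element, so $tt^g$ has $2$-power order and $\langle t,t^g\rangle$ is a dihedral $2$-group; this holds for \emph{all} $g$, so the Baer--Suzuki theorem forces $\langle t^G\rangle\le O_2(G)=1$, whence $t=1$, a contradiction. Therefore every nontrivial element of $C$ has order at least~$4$.

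The remaining task---and the step I expect to be the main obstacle---is to contradict the existence of elements of order $\ge4$, i.e.\ to reduce to the involution case above. The difficulty is precisely that there is no squaring hypothesis: the natural candidate involution $t=x^{2^{k-1}}$ for $x\in C$ of order $2^k$ need not lie in $C$, and Baer--Suzuki cannot be applied to $t$ directly, because for the conjugates $x^g$ outside $M(x)$ the dichotomy only says $\langle x,x^g\rangle=G$, giving no control over $\langle t,t^g\rangle$. My plan to close this is a commutator descent exploiting the fibre structure: starting from a generating pair $a,b\in C$ (so $M(a)\ne M(b)$) with $z=[a,b]\in C\setminus\{1\}$, the dichotomy forces at least one of $\langle a,z\rangle,\langle b,z\rangle$ to equal $G$ (else $M(a)=M(z)=M(b)$), producing a new generating pair inside $C$; iterating this and analysing the resulting configuration of maximal subgroups---or reducing to $C$ a single class and showing the involution powers are trapped in a common $2$-local---should contradict simplicity. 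Isolating the strictly decreasing invariant (or the local-geometric input) that terminates this descent is the crux of the argument.
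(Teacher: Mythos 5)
Your preliminary reductions are correct: the dichotomy (every subset $S\subseteq C$ generates either a $2$-group or all of $G$) does follow from minimality, since the derived subgroup of $\langle S\rangle$ is indeed generated by iterated brackets in the elements of $S$ (the subgroup they generate is normalized by each $s\in S$, hence normal, hence equals the normal closure of the $[s,t]$), and all such brackets lie in $C$; and your elimination of involutions from $C$ is exactly the paper's own device --- if $t\in C$ is an involution then $[t,t^g]=(tt^g)^2\in C$ forces $tt^g$ to be a $2$-element for all $g$, and Baer--Suzuki gives $t\in O_2(G)=1$. But the proposal then stops where the actual proof begins: nothing in it contradicts the existence of elements of order at least $4$ in $C$, and the concluding ``commutator descent'' is a plan with no terminating invariant, no identified contradiction, and (as you say yourself) an unresolved crux. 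That missing step is not a routine verification; it is essentially the whole content of the paper's argument for this lemma.

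What the paper does instead is to upgrade Lemma~\ref{lem:unique} from single elements to a full Sylow $2$-subgroup $P$: any maximal subgroup containing $P$ contains a nontrivial element of $C\cap P$, whose maximal overgroup is unique, so $P$ itself lies in a unique maximal subgroup $M$ with $C\cap P\subseteq O_2(M)$. This is precisely the hypothesis of Aschbacher's classification \cite{As1} of simple groups whose Sylow $2$-subgroup lies in a unique maximal subgroup, which leaves only three possibilities: a rank-one group of Lie type in characteristic $2$, the groups $\PSL_2(q)$ with $q$ odd, or a group of Lie type in odd characteristic with $O_2(M)$ of exponent $2$. In the first and third cases $C$ is forced to contain involutions, and your Baer--Suzuki argument finishes; the surviving case $\PSL_2(q)$, $q\ge5$ odd, is then settled by explicit computation (for $q\equiv1\pmod4$ a non-commuting pair of elements of $C$ in a Borel subgroup has unipotent commutator; for $q$ not a power of $3$ a trace computation shows commutators of conjugates of $x\in C$ realize all traces, in particular $\pm1$; for $q=3^a$ the Sylow $2$-subgroups are elementary abelian, so $C$ again contains involutions). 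Note that the troublesome configurations your descent would have to defeat really occur inside $\PSL_2(q)$, where the fibre structure of $x\mapsto M(x)$ gives no contradiction by counting alone; this is why the paper invokes a deep classification theorem plus case-by-case computation rather than a self-contained combinatorial argument, and why your sketch, as it stands, cannot be completed without importing comparable input.
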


\begin{proof}
Let $G$ be a minimal counterexample. Let $P$ be a Sylow $2$-subgroup of $G$.
By Lemma~\ref{lem:unique} $P$ is contained in a unique maximal subgroup $M$
and $C \cap P \subseteq O_2(M)$.

In \cite[Thm.~A]{As1}, Aschbacher classifies all almost simple groups
in which a Sylow $2$-subgroup is contained in a unique maximal subgroup $M$.
Thus, $C \cap O_2(M)$ is nonempty and inspection of the conclusion of
Aschbacher's theorem leave only the following cases (for the case $G$ is simple):

\begin{enumerate}
 \item $G$ is a rank~1 Lie type group in characteristic $2$ (i.e.,
  one of $\PSL_2(q), \PSU_3(q)$ or $\tw2B_2(q^2)$);
 \item $G=\PSL_2(q)$ with $q$ odd; or
 \item $G$ is of Lie type in odd characteristic and $O_2(M)$ has exponent $2$.
 \end{enumerate}

In the first case, by inspection of the Sylow $2$-subgroup, we see that
$C$ must contain involutions. By the Baer--Suzuki theorem, any involution
of a simple group is contained in a dihedral group of order twice an odd
number, a contradiction.   Similarly, $C$ must consist of involutions
in the third case and we obtain a contradiction.

So we are reduced to the case of $\PSL_2(q)$ with $q \ge 5$ odd.
It is straightforward to compute in these cases that some commutator
of elements in $C$ is not a $2$-element. Indeed, if $q \equiv 1 \pmod 4$, then
we can choose a non-commuting pair of elements in $C$ contained in a
Borel subgroup and so the commutator will be a nontrivial unipotent element.
Else, first suppose that $q$ is not a power of $3$. Take $x \in C$ with
$$  x = \begin{pmatrix}   0  &   1  \\   -1  & t \\ \end{pmatrix} $$
with $t$ to be the trace of an element in $C$. Take $y$ conjugate to $x$ of
the form
$$ y= \begin{pmatrix}   t  &   1  \\   -1 & 0 \\ \end{pmatrix}. $$
Then $\tr(xyx^{-1}y^{-1}) = t +3$. Since $C$ is closed under taking
commutators, we see that the traces of elements in $C$ can take on any value
and so in particular the value $\pm 1$ (corresponding to elements
of order $3$). 

Finally, consider the case that $q=3^a \ge 27$ with $a$ odd (as
$q\not\equiv1\pmod4$). Then the Sylow $2$-subgroups of $\PSL_2(q)$ are
elementary abelian of order $4$. Then $C$ contains involutions and the
result follows by the Baer--Suzuki theorem.
\end{proof}

\begin{lem}  \label{lem:nci}
 If  $x \in C$ and $h \in G$ are nontrivial, then $[x,x^{-g}]\ne 1$ for
 some conjugate $g$ of $h$.  
\end{lem}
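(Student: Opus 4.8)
The plan is to argue by contradiction, using the unique maximal subgroup property of Lemma~\ref{lem:unique} together with the simplicity of $G$ from Lemma~\ref{lem:simple}. Assume the conclusion fails, so $[x,x^{-g}]=1$ for every conjugate $g$ of $h$. Since an element commutes with $x^{-g}$ exactly when it commutes with $x^{g}$, the hypothesis says precisely that $x^{g}\in C_G(x)$ for all $g\in h^{G}$. Recall that $G$ is a nonabelian simple group (it is perfect and simple), so $Z(G)=1$.

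First I would pin down the centralizer. As $x\ne1$ is noncentral, $C_G(x)$ is a proper subgroup and hence lies in some maximal subgroup; since $x\in C_G(x)$, that maximal subgroup contains $x$ and must therefore be the unique maximal subgroup $M$ of $x$ furnished by Lemma~\ref{lem:unique}. Thus $C_G(x)\le M$, and combining this with the displayed consequence of the hypothesis gives $x^{g}\in M$ for every $g\in h^{G}$.

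Next I would transport the unique maximal subgroup property along conjugation: for $g\in h^{G}$ the element $x^{g}$ again lies in $C$, and its unique maximal overgroup is the conjugate $M^{g}$. But $M$ itself is a maximal subgroup containing $x^{g}$, so uniqueness forces $M^{g}=M$, i.e.\ $g\in N_G(M)$. Since $M$ is maximal in the simple group $G$ it is self-normalizing, $N_G(M)=M$, so $g\in M$. Hence $h^{G}\subseteq M$, and the normal closure $\langle h^{G}\rangle$ lies in $M$; but $\langle h^{G}\rangle$ is a nontrivial normal subgroup of the simple group $G$, so it equals $G$, contradicting $M\lneq G$. I expect the only delicate point to be this last transport step---the passage from $x^{g}\in M$ to $g\in M$---while the reduction $C_G(x)\le M$ and the final simplicity contradiction are routine once Lemma~\ref{lem:unique} is in hand.
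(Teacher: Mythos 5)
Your proof is correct and follows essentially the same route as the paper: from $x^{g}\in C_G(x)\le M$ one gets, by uniqueness of the maximal overgroup, $M^{g}=M$ for all $g\in h^{G}$, and simplicity of $G$ yields the contradiction. The paper phrases the endgame as "$M$ is normalized by every conjugate of $h$, hence by $G$," while you pass through $N_G(M)=M$ and $\langle h^{G}\rangle\le M$; these are interchangeable finishes of the same argument.
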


\begin{proof}
Let $M$ be the unique maximal subgroup containing $x$. Of course,
$M$ contains $C_G(x)$. So if the result is false, $x^{-g} \in C_G(x) \le M$ for
all conjugates $g$ of $h$. Then $M$ is also the unique maximal subgroup
containing $x^{-g}$, but of course $x^{-g} \in M^g$.
Thus, $M=M^g$ and so $M$ is normalized by all conjugates of $h$ and so by $G$;
a contradiction.
\end{proof}

\begin{lem}  \label{lem:4}
 Let $g\in G$.
 \begin{enumerate}
  \item[(a)] Some nontrivial element of $C$ is a product of $4$ elements
   which are conjugate to either $g$ or $g^{-1}$.
  \item[(b)] If $g$ is an involution, then $g$ inverts some nontrivial element
   of $C$ and this element is the product of two conjugates of $g$.
 \end{enumerate}
\end{lem}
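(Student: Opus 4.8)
The plan is to realize both required elements as commutators $[x,x^{\pm g'}]$ of pairs of elements of $C$, invoking Lemma~\ref{lem:nci} to keep them nontrivial and Lemma~\ref{lem:p odd} to control their order. Throughout I would fix a nontrivial $x\in C$, which exists since $G=\langle C\rangle\ne1$, and recall that $C$ is a normal subset closed under inverses and commutators, so that $x^{g'},x^{-g'}\in C$ and every commutator of two such elements again lies in $C$.

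For part (a) (where we may assume $g\ne1$), I would apply Lemma~\ref{lem:nci} with $h=g$ to get a conjugate $g'$ of $g$ with $c:=[x,x^{-g'}]\ne1$; then $c\in C$. Using $(x^{-g'})^{-1}=x^{g'}$ and expanding,
\[ c=x^{-1}x^{g'}xx^{-g'}=(g'^{-1})^{x}\cdot g'\cdot(g'^{-1})^{x^{-1}}\cdot g', \]
which displays $c$ as a product of four factors, each conjugate to $g'$ or $g'^{-1}$ and hence to $g$ or $g^{-1}$.

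For part (b), let $g$ be an involution. Since $C_G(x)$ is closed under inverses and $x^{-g'}=(x^{g'})^{-1}$, the same application of Lemma~\ref{lem:nci} yields a conjugate $g'=g^{t}$ of $g$ with $[x,x^{g'}]\ne1$. Put $c:=[x,x^{g'}]\in C$. As $g'$ is an involution, $(x^{g'})^{g'}=x$, so conjugation by $g'$ interchanges $x$ and $x^{g'}$, and therefore $c^{g'}=[x^{g'},x]=c^{-1}$; thus $g'$ inverts $c$. To rewrite $c$ as a product of two conjugates of $g'$ I would use that $c$ has odd order (its order is a power of the odd prime $p$ by Lemma~\ref{lem:p odd}). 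In the dihedral group $\langle c,g'\rangle=\langle c\rangle\rtimes\langle g'\rangle$ the element $cg'$ is an involution, and since $\langle c\rangle$ has odd order the relation $(g')^{c^{j}}=c^{-2j}g'$ together with the invertibility of $2$ modulo $|c|$ shows that all reflections $c^{i}g'$ are conjugate under $\langle c\rangle$; in particular $cg'$ is conjugate to $g'$. Hence $c=(cg')\,g'$ is a product of two conjugates of $g'$. Conjugating by $t^{-1}$ finally gives a nontrivial element $c^{t^{-1}}\in C$ that is inverted by $g'^{t^{-1}}=g$ and is a product of two conjugates of $g$.

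The commutator expansion in (a) and the verification that $g'$ inverts $c$ in (b) are routine. The step that really carries the argument is the passage in (b) from ``$g'$ inverts $c$'' to ``$c$ is a product of two conjugates of $g'$'': this is where one needs $c$ to have odd order, so that the dihedral group $\langle c,g'\rangle$ has a single class of reflections --- precisely the input supplied by Lemma~\ref{lem:p odd}. Nontriviality of the constructed elements is handed to us by Lemma~\ref{lem:nci}, so that presents no separate difficulty.
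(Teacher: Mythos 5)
Your proof is correct. Part (a) coincides with the paper's argument: both expand the nontrivial commutator $[x,x^{-g'}]$ supplied by Lemma~\ref{lem:nci} into a product of four conjugates of $g^{\pm1}$. In part (b), however, you take a genuinely different route. The paper stays with the very same element: when $g$ is an involution, the four-factor expansion regroups as $[x,x^{-g}]=(x^{-1}gx)\cdot(gx)g(gx)^{-1}$, a product of two conjugates of $g$, and since a product of two involutions is inverted by either factor, both claims of (b) fall out of this single identity with no restriction on $p$. You instead pass to $[x,x^{g'}]$, obtain inversion for free from the observation that conjugation by the involution $g'$ swaps the two entries of the commutator, and then recover the two-conjugates decomposition from the structure of the dihedral group $\langle c,g'\rangle$: because $|c|$ is odd (Lemma~\ref{lem:p odd}), all reflections $c^ig'$ are $\langle c\rangle$-conjugate, so $c=(cg')g'$ is a product of two conjugates of $g'$; your explicit final conjugation by $t^{-1}$ also tidies up a point the paper glosses over, namely transferring the conclusion from the conjugate $g'$ back to $g$ itself. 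Both routes are valid here, since Lemma~\ref{lem:p odd} precedes Lemma~\ref{lem:4} in the paper's chain of lemmas; the trade-off is that the paper's identity-based argument is self-contained and would work even for $p=2$, whereas your version of (b) genuinely depends on the oddness of $p$ --- a dependence worth flagging, since it means your proof of (b) could not be quoted in a context where that oddness had not already been established.
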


\begin{proof}
By the previous result, we can choose $x \in C$ so that
$$ 1 \ne [x,x^{-g}]= x^{-1}gxg^{-1}xgx^{-1}g^{-1} \in C.$$
If g is an involution, then this becomes
$[x,x^{-g}] = x^{-1}gx\cdot (gx) g (gx)^{-1}$.
Thus $x^{-1}gx$ inverts $[x,x^{-g}]$, and hence $g$ inverts
$x[x,x^{-g}]x^{-1}\in C$.
\end{proof}

\begin{lem}   \label{lem:alt}
 $ G \ne \fA_n$, $n \ge 5$.
\end{lem}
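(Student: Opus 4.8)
The plan is to argue in the minimal counterexample, where by Lemmas~\ref{lem:simple} and~\ref{lem:p odd} the group $G=\fA_n$ is simple with $n\ge5$, $p$ is odd, and $C$ is a normal set of nontrivial $p$-elements closed under inverses (Lemma~\ref{lem:inverses}) and under commutators, with $\langle C\rangle=G$; moreover, when $p=5$ we may use that $C$ is closed under squaring (the standing hypothesis of Theorem~\ref{thm:main1}). The engine throughout is Lemma~\ref{lem:unique}: each $x\in C$ lies in a unique maximal subgroup $M$, and $C\cap M\subseteq O_p(M)$. I would first pin down the cycle type by showing every nontrivial $x\in C$ is fixed-point-free. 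Indeed, if $x$ fixed at least two points it would lie in two distinct point stabilizers $\fA_{n-1}$, each maximal, contradicting uniqueness; and if $x$ fixed exactly one point, its unique maximal overgroup is the point stabilizer $M\cong\fA_{n-1}$, whence $x\in C\cap M\subseteq O_p(M)=1$ (as $O_p(\fA_{n-1})=1$ for odd $p$), forcing $x=1$. So $\mathrm{supp}(x)=\{1,\dots,n\}$ and the cycle lengths of $x$ are powers of $p$ summing to $n$.

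Next I would dispose of the case that $x$ has at least two nontrivial cycles by producing a commutator of two elements of $C$ that is not a $p$-element. Picking $a,b$ in distinct cycles, a direct check gives the identity $[x,x^{(a\,b)}]=(a\,b)(x^{-1}a\ \ x^{-1}b)$, an involution, which for odd $p$ contradicts closure of $C$ under commutators, provided $x^{(a\,b)}\in C$. Since $(a\,b)$ is odd I must watch class splitting: the $\fA_n$-class of $x$ splits exactly when the cycle lengths are all distinct, and in that (still multi-cycle) situation I would instead conjugate by the even permutation $(a\,b)(a'\,b')$ with $a'\ne a$ and $b'\ne b$ chosen in the same two cycles; its commutator with $x$ is then a product of two such transposition pairs, again an involution. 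Either way $x^g\in C$ and $[x,x^g]$ is not a $p$-element.

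It remains to handle a single $n$-cycle, $n=p^k$. If $k\ge3$, then $x$ preserves two distinct block systems (with $p$ blocks of size $p^{k-1}$, and with $p^{k-1}$ blocks of size $p$), yielding two distinct imprimitive maximal subgroups and contradicting uniqueness. If $k=2$, the only nontrivial block system preserved by the regular cyclic group $\langle x\rangle$ has blocks of size $p$, and after ruling out a primitive overgroup (quoting the classification of primitive groups containing a full cycle) the unique maximal subgroup containing $x$ is $M=(\fS_p\wr\fS_p)\cap\fA_n$, where $O_p(M)=1$, so once more $x=1$. Finally, for $k=1$ we have $G=\fA_p$ and $C$ consists of $p$-cycles. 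For $p\ge7$ a direct computation with an even conjugator works, e.g. $[\,(1\,2\,3\,4\,5\,6\,7),(1\,2\,3\,4\,5\,6\,7)^{(1\,3)(5\,7)}\,]$ is a $5$-cycle, not a $7$-element. For $p=5$ a single $\fA_5$-class of $5$-cycles is genuinely closed under commutators into $5$-elements---this is precisely the exceptional configuration of Theorem~\ref{thm:mainB}---but the squaring hypothesis forces both classes of $5$-cycles into $C$ (squaring interchanges them), and then $[\,(1\,2\,3\,4\,5),(1\,3\,2\,4\,5)\,]=(2\,3\,5)$ is a $3$-cycle in $C$, the desired contradiction.

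The main obstacle is the single-cycle case: it requires both the parity/class-splitting bookkeeping and, for $k=2$, excluding an exotic primitive overgroup of a $p^2$-cycle (the point at which a structural fact about primitive groups containing a cycle must be quoted). Conceptually the crux is that the whole analysis funnels down to exactly one surviving configuration---$\fA_5$ with $p=5$ and a single class of $5$-cycles---and the entire role of the squaring hypothesis is to eliminate precisely that case.
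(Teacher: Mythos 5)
Your strategy is genuinely different from the paper's: you run everything through Lemma~\ref{lem:unique} (forcing elements of $C$ to be fixed-point-free, then analyzing cycle types), whereas the paper applies Lemma~\ref{lem:4}(b) to an involution $g$ that is a product of two transpositions, so that the element of $C$ inverted by $g$ is a product of two conjugates of $g$ and hence moves at most $6$ points; this gives $p\le 5$ at once and collapses the lemma to two small computations. Your fixed-point-free reduction and the equal-cycle-lengths case are correct, but the distinct-cycle-lengths case contains a step that fails. Your single-transposition identity $[x,x^{(a\,b)}]=(a\,b)(x^{-1}a\;x^{-1}b)$ is right, but the claim that conjugating by $(a\,b)(a'\,b')$ for \emph{arbitrary} $a'\ne a$, $b'\ne b$ in the same two cycles yields a product of two such transposition pairs is false. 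Take $x=(1\,2\,3)(4\,5\,6\,7\,8\,9\,10\,11\,12)$ (cycle type $(3,9)$, a legitimate configuration for $p=3$) and $g=(1\,4)(2\,5)$, i.e.\ $a'=xa$, $b'=xb$: a direct computation gives $[x,x^g]=(1\,7\,4)(2\,5)(3\,6)$, which has order $6$. Suitably spread-out choices do work (here $g=(1\,4)(3\,6)$ gives the involution $(1\,4)(5\,8)$), but you neither impose nor verify such a choice, and inside a $3$-cycle the obvious "disjoint supports" condition cannot even be met; so as written this case is unproven. (In the example the commutator happens not to be a $3$-element anyway, but you give no argument covering all configurations.)

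There is a second concrete failure in the $k=2$ case, at $p=3$, $n=9$: a $9$-cycle \emph{does} lie in a primitive proper subgroup of $\fA_9$, namely $\PSL_2(8).3$ acting on the projective line over $\FF_8$ (as $9=(8^2-1)/(8-1)$ is a Singer-cycle degree), so the classification of primitive groups containing a full cycle does not rule this out; moreover $O_3\bigl((\fS_3\wr\fS_3)\cap\fA_9\bigr)\ne 1$ (it contains $\fA_3\times\fA_3\times\fA_3$), so your claim "$O_p(M)=1$" is also wrong for $p=3$. Both defects are repairable --- two distinct maximal overgroups already contradict Lemma~\ref{lem:unique}, and a $9$-cycle maps onto a nontrivial element of the top $\fS_3$, hence lies outside $O_3$ of the block stabilizer --- but the proof as stated does not make these arguments. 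Smaller issues: maximality of the block stabilizers for $k\ge3$ is asserted without justification, and the $k=1$, $p\ge7$ subcase is computed only for $p=7$ while the general prime needs at least a support-counting argument. Your endgame for $\fA_5$ with $p=5$ via the squaring hypothesis is correct and coincides with the paper's; but overall the proposal needs the repairs above, all of which the paper's shorter inversion argument avoids.
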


\begin{proof}
Note that $p\ne2$ by Lemma~\ref{lem:p odd}.
Let $g \in \fA_n$ be a product of two transpositions. Then the only
$p$-elements inverted by $g$ are of order at most $5$ and move at most $6$
points. So $p \le 5$ by Lemma~\ref{lem:4} and $C$ contains a $p$-cycle, or
$p=3$ and $C$ contains elements that are products of two $3$-cycles.

If $p=3$, commutators of elements in $C$ can be nontrivial involutions
(by considering $\fA_4$, and in $\fA_6$, we can apply an automorphism and
reduce to $3$-cycles).
If $p=5$, then since $C$ is closed under squaring $C$ contains all $5$-cycles
and a straightforward computation (in $\fA_5$) shows that there are commutators
in $C$ that have order prime to $5$.
\end{proof}

\begin{lem}   \label{lem:natural}
 $G$ is not a group of Lie type in characteristic $p$.
\end{lem}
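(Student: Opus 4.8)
The plan is to rule out $G$ being a simple group of Lie type in its defining characteristic $p$, using the structural facts already established: $G$ is simple (Lemma~\ref{lem:simple}), $p$ is odd (Lemma~\ref{lem:p odd}), $C$ is closed under inverses (Lemma~\ref{lem:inverses}) and under commutators, and crucially that each $x\in C$ lies in a \emph{unique} maximal subgroup $M$ with $C\cap M\subseteq O_p(M)$ (Lemma~\ref{lem:unique}). In defining characteristic, the $p$-elements are precisely the unipotent elements. A unipotent element $x$ lies in the unipotent radical of many parabolic subgroups, and in a simple group of Lie type of rank $\ge 2$ there are at least two distinct maximal parabolics whose unipotent radicals contain a conjugate of any given nontrivial unipotent element. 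So the first and main step is to exploit the uniqueness-of-maximal-subgroup condition to force the rank to be~$1$.

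\emph{Reducing to rank one.} The uniqueness condition in Lemma~\ref{lem:unique} is extremely restrictive for unipotent elements. If $G$ has Lie rank at least~$2$, then for a long-root element $x$ there are distinct maximal parabolic subgroups $P_1\ne P_2$ with $x\in O_p(P_i)=R_u(P_i)$ for both; since maximal parabolics are themselves maximal subgroups (for the relevant cases), this already contradicts uniqueness. One must be slightly careful when a unipotent element is regular or when $G$ is small, but the upshot is that $G$ must have rank~$1$, i.e. $G\in\{\PSL_2(q),\PSU_3(q),\tw2B_2(q^2),\tw2G_2(q^2)\}$ with $q=p^a$ (and the last only for $p=3$). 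In each rank~$1$ case a nontrivial unipotent element lies in a unique Sylow $p$-subgroup and hence in a unique Borel, which is indeed its unique maximal overgroup, so these survive the first reduction and must be treated directly.

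\emph{Finishing the rank-one cases by trace/commutator computations.} For $G=\PSL_2(q)$ the argument should mirror the one already used in the proof of Lemma~\ref{lem:p odd}: choosing two non-commuting unipotent elements inside a Borel subgroup, or more robustly using the trace-function argument, one produces a commutator of elements of $C$ whose order is prime to~$p$ (for instance a semisimple or unipotent element of the wrong order), contradicting closure of $C$ under commutators together with $O_p(G)=1$. Concretely, since $C$ is closed under commutators and inverses, the set of traces realized by elements of $C$ is closed under the operation $t\mapsto$ (trace of a commutator), which for generic $t$ sweeps out enough values to hit an element whose order is coprime to~$p$. For $\PSU_3(q)$ and the Suzuki/Ree groups one argues similarly inside the Borel subgroup, or invokes that two suitable conjugate unipotent elements generate a subgroup containing a nontrivial $p'$-element; the Baer--Suzuki theorem and Lemma~\ref{lem:4} may again be used to extract an inverted or $p'$-order element.

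\emph{Expected main obstacle.} The genuinely delicate point is the rank-reduction step: verifying that the uniqueness-of-maximal-subgroup hypothesis really does force rank~$1$, rather than being evaded by small or exceptional configurations (e.g. doubly-transitive parabolic actions, or cases where a unipotent element happens to lie in $O_p$ of only one maximal subgroup because the second parabolic is non-maximal or the two parabolics coincide in a degenerate way). Pinning this down cleanly, uniformly across all the families and low-rank exceptions, is where the real work lies; the rank-one trace computations afterward are routine in the spirit of Lemma~\ref{lem:p odd}.
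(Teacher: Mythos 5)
Your overall skeleton agrees with the paper's: rank at least~$2$ is killed by Lemma~\ref{lem:unique} together with parabolic subgroups (the paper does this even more simply than you propose: a Sylow $p$-subgroup lies in at least two maximal parabolics, so any element of $C$ lies in two maximal subgroups -- no care about regular elements or unipotent radicals is needed), and the rank-one groups are then treated case by case, with $\tw2B_2(q^2)$ excluded outright since $p$ is odd. However, you have misjudged where the content lies, and your proposed rank-one arguments contain genuine errors. First, ``choosing two non-commuting unipotent elements inside a Borel subgroup'' proves nothing here: in defining characteristic all unipotent elements of a Borel $B$ lie in $O_p(B)=U$, so their commutator is again unipotent, i.e.\ a $p$-element, which is perfectly consistent with closure of $C$ under commutators. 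That trick worked in Lemma~\ref{lem:p odd} precisely because there the elements of $C$ were semisimple ($2$-elements in odd characteristic); transplanted to the defining-characteristic setting it is vacuous. For $\PSL_2(q)$ one must instead take unipotent elements in \emph{opposite} Borels and compute that the commutator has trace $\ne\pm2$, hence is not a $p$-element. Second, your fallback via Baer--Suzuki and Lemma~\ref{lem:4} does not close the gap: knowing that $\langle x,x^g\rangle$ is not a $p$-group, or that some involution inverts an element of $C$, does not produce a commutator $[x,y]$ with $x,y\in C$ that fails to be a $p$-element, and only the latter contradicts the hypothesis.

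Concretely, what is missing is the paper's mechanism for $\PSU_3(q)$ and $\tw2G_2(3^{2a+1})'$. For $\PSU_3(q)$ one shows that either $C$ contains transvections or a suitable commutator of elements of $C$ is a transvection; since transvections lie in a subgroup $\SL_2(q)$, the element then sits in more than one maximal subgroup, contradicting Lemma~\ref{lem:unique} -- the uniqueness lemma, not a trace computation, is what finishes this case. For $\tw2G_2(3^{2a+1})$ one observes that unipotent elements of order~$3$ are conjugate into $\tw2G_2(3)\cong\PSL_2(8).3$ and hence lie in at least two maximal subgroups (again invoking Lemma~\ref{lem:unique}), so $C$ must consist of elements of order~$9$, and these require a separate counting/trace computation in the spirit of Lemma~\ref{lem:rank1}. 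None of this appears in your sketch, and the steps you describe as ``routine'' are exactly where the proof's work is concentrated.
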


\begin{proof}
First suppose that $G$ has rank $1$.
Then (as $p \ne 2$), $G=\PSL_2(q)$, $\PSU_3(q)$, or ${^2}G_2(3^{2a+1})'$.
If $G=\PSL_2(q)$, then one computes directly (again
noting that if $q =5$, then $C$ contains all unipotent elements).

If $G=\PSU_3(q)$, then $C$ either consists of transvections or
a suitable pair of elements in $C$ have commutator which is a transvection.
Thus, $C \cap \SL_2(q)$ is nontrivial, a contradiction.
Suppose that $G={^2}G_2(3^{2a+1})$ with $a\ge1$. Note that any unipotent
element is conjugate to an element of ${^2}G_2(3) \cong \PSL_2(8).3$.
In particular, any unipotent element is contained
in at least two maximal subgroups. For ${^2}G_2(3)'=\PSL_2(8)$ note
that any element of order $3$ is contained
in a Frobenius group of order $21$, a contradiction. So any nontrivial element
of $C$ has order $9$. A straightforward computation (see Lemma~\ref{lem:rank1}
below) shows that $C$ cannot be closed under commutators.

Now assume that $G$ has rank at least $2$. Then a Sylow $p$-subgroup is
contained in at least two maximal parabolic subgroups, a contradiction.
\end{proof}

\begin{lem}   \label{lem:rank1}
 $G$ is not a rank $1$ Lie type group in characteristic $r \ne p$.
\end{lem}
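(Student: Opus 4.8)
The plan is to run through the four families of rank~1 groups of Lie type in characteristic $r\ne p$ --- namely $\PSL_2(q)$, $\PSU_3(q)$, $\tw2B_2(q^2)$ and the Ree groups $\tw2G_2(q^2)$ (the last occurring only when $p\ne3$, since otherwise they are in characteristic~$p$ and already excluded by Lemma~\ref{lem:natural}) --- and in each case exhibit a commutator of elements of $C$ that is not a $p$-element, contradicting that $C$ is closed under commutators. Since $r\ne p$, every nontrivial element of $C$ is semisimple and so lies in a maximal torus. In a rank~1 group each such torus $T$ is cyclic, the centralizer of a nontrivial (regular) semisimple element is $T$, and the torus normalizer $N_G(T)$ is maximal; by Lemma~\ref{lem:unique} it is therefore the unique maximal subgroup containing such an $x$, whence $C\cap N_G(T)\subseteq O_p(N_G(T))$ is contained in the cyclic Sylow $p$-subgroup $T_p$ of $T$. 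In particular any two elements of $C$ lying in a common torus commute, so a non-commuting pair $x,y\in C$ (which must exist, as $\langle C\rangle=G$ is nonabelian) necessarily comes from distinct conjugate tori. The small-$q$ cases and the non-regular semisimple elements of $\PSU_3(q)$ are handled separately by direct computation.

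For $G=\PSL_2(q)$ the argument is finished exactly in the spirit of Lemma~\ref{lem:p odd}. Writing $x=\mathrm{diag}(\zeta,\zeta^{-1})$ with $\zeta$ of $p$-power order and letting $y=x^g$ range over the conjugates of $x$ in $C$, the Fricke identity
\[
 \tr[x,y]=\tr(x)^2+\tr(y)^2+\tr(xy)^2-\tr(x)\,\tr(y)\,\tr(xy)-2
\]
(valid after lifting to $\SL_2$) is a nonconstant quadratic in $s=\tr(xy)$, and $s$ takes many values as $g$ varies. Choosing $g$ so that $[x,y]$ is unipotent (hence an $r$-element) or is semisimple of order prime to $p$ produces an element of $C$ that is not a $p$-element, a contradiction.

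The remaining families rest on the same principle but without a single clean trace identity. In $\PSU_3(q)$ one shows that two non-commuting conjugate semisimple $p$-elements admit a suitable commutator which is either unipotent --- an $r$-element with $r\ne p$, already excluded --- or semisimple coming from a different torus and so of order prime to $p$. For the Suzuki and Ree groups one uses Lemma~\ref{lem:4}(b): taking $g$ an involution, $g$ inverts some nontrivial $x\in C$, which is thereby a product of two conjugate involutions lying in distinct cyclic tori, and one computes from the explicit structure of the three maximal tori and their dihedral-type normalizers that an appropriate commutator has order divisible by a prime different from~$p$. This last computation is the main obstacle: unlike the $\PSL_2$ case, the twisted groups and $\PSU_3(q)$ require genuine control of the orders of products of two semisimple elements drawn from non-conjugate maximal tori, which is where the explicit conjugacy-class and torus data for these groups must be invoked.
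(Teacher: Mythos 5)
Your outline matches the paper's (the same four families, Lemma~\ref{lem:unique} to force nontrivial elements of $C$ into tori, and Lemma~\ref{lem:4} to produce constraints), but at every point where real work is required the proposal asserts the conclusion or defers it, as you yourself concede (``this last computation is the main obstacle''). Concretely: for $\PSL_2(q)$, ``choosing $g$ so that $[x,y]$ is unipotent \dots or is semisimple of order prime to $p$'' is exactly what has to be proved, and the Fricke identity does not yield it. The genuinely hard case is $q=3^e$ with $p\mid q+1$: the trace-shift trick of Lemma~\ref{lem:p odd} fails there (it needs $3\neq 0$ in $\FF_q$), and when the $p$-part of $q+1$ is large, most elements of the nonsplit torus are $p$-elements, so knowing that $\tr[x,y]$ takes many values gives no contradiction. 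The paper's treatment of this case is the heart of the lemma: a counting argument on $G/B$ with the sets $C(a,b)$ and $C(a,b,c)$ shows that either some nontrivial commutator of elements of $C$ fixes a point of $G/B$ (impossible, since $C\cap B=\{1\}$ when $p\mid q+1$), or else $C^{\#}$ is a single class of size exactly $q(q-1)$ with $\tr[x,y]=\pm\tr(x)$ for all noncommuting pairs; a Laurent-polynomial argument in a torus parameter then forces $\tr(x)=2$, i.e.\ $x$ unipotent, a contradiction. Nothing of this kind appears in your proposal.

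The remaining families are likewise missing their proofs, and the routes you sketch are not the ones that close. For $\PSU_3(q)$ the paper's argument is short and different from yours: by Lemma~\ref{lem:4}(b) some nontrivial $x\in C$ is a product of two conjugate involutions, hence fixes a $1$-space of the natural module, hence lies in $\SL_2(q)$ or in a Borel subgroup --- both excluded because those subgroups have trivial $p$-core, contradicting Lemma~\ref{lem:unique}. For $\tw2B_2(q^2)$ and $\tw2G_2(q^2)$ the paper does not compute with dihedral torus normalizers at all: it reruns the $\PSL_2(3^e)$ counting argument and then a trace computation in the natural $4$- resp.\ $7$-dimensional representation, concluding $\tr(x)=0$ for all $x\in C$, which no nontrivial $p$-element has (in $\tw2B_2$, $5$-elements have trace $-1$ and for $p\ne5$ the traces lie outside $\FF_2$). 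Finally, two structural claims you lean on are false in general: maximal tori of rank-$1$ groups need not be cyclic (the torus of order $(q+1)^2/\gcd(3,q+1)$ in $\PSU_3(q)$ is not), and torus normalizers need not be maximal subgroups (in $\PSU_3(q)$ the normalizer of the torus of order $(q^2-1)/\gcd(3,q+1)$ lies inside a point stabilizer). So what you have is a plan consistent with the paper's strategy, but the substance of the proof --- the counting and trace arguments --- is absent.
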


\begin{proof}
Let $B$ be a Borel subgroup of $G$ and $U \le B$ its unipotent radical, a
Sylow $r$-subgroup.

We consider the various cases.

Suppose first that $G=\PSL_2(q)$ with $q$ a power of $r$ and $q \ge 7$
(with $q \ne 9$). Since $p$ is odd, it follows that $p$ divides precisely
one of $q \pm 1$. If $p$ divides $q-1$, then $C \cap B$ is nontrivial and
since $O_p(B)=1$, $G$ cannot be a minimal counterexample.

Suppose that $p|(q+1)$. If $q$ is not a power of $3$, we can argue as for the
case $p=2$ to see that for $x\in C$, $\tr[x,x^g]$ can be arbitrary and in
particular, $[x,x^g]$ is not always a $p$-element for some $g \in G$.

So assume that $q=3^e \ge 27$.
Note that $|C^{\#}| \ge q(q-1)$. Also $C \cap B = \{1\}$. Fix $a\ne b\in G/B$.
Let $C(a,b)=\{ x \in C \mid xa = b\}$. For a fixed $a$, since there are only
$q$ possibilities for $b$, we see that $|C(a,b)| \ge q-1$ and since $G$ is
$2$-transitive, in fact we see that $|C(a,b)|=q-1$ for all $a \ne b$.
Let $c$ be a third (distinct) element in $G/B$ and consider
$C(a,b,c):=\{x \in C \mid xa=b, xb=c \}$. If $x \ne y \in C(a,b,c)$, then we
see that $[y,x]$ fixes $a$. Moreover, $x$ and $y$ do not commute for if they
do, then $x^{-1}y$ is in a nonsplit torus and also
in a conjugate of $B$, whence $x=y$, a contradiction. Thus, we are done
unless $|C(a,b,c)| \le 1$ for all $c$ different from $a, b$. On the other hand,
$C(a,b)$ is the disjoint union of the $C(a,b,c)$ for the $q-1$ different
choices for $c$. Thus, we are done unless $|C(a,b,c)|=1$ for all distinct
triples $(a,b,c) \in (G/B)^3$ in which case $|C^{\#}| = q(q-1)$ and $C^{\#}$
is a single conjugacy class. In particular, this implies that
$\tr[x,y] =\pm\tr(x)$ for any noncommuting $x, y\in C$ (working in $\SL_2(q)$).

For $s\in\FF_q^\times$ let $g = g(s)$ be the diagonal matrix with eigenvalues
$s, s^{-1}$. Thus $\tr[x,x^g]$ must take on the same value for at
least $(q-3)/2$ different values of $s$. Note that $f(s):=\tr[x,x^g]$ is an
$\FF_q$-linear combination of $s^4, s^3, \ldots, s^{-3}, s^{-4}$. Write
$f(s)=\sum_{i=-4}^4 a_is^i$. Thus, $f(s) = t$ is fixed for at least $(q-1)/3$
values of $s$. Multiplying through by $s^4$ gives
$s^4f(s)-ts^4$ has at least $(q-3)/2$ zeroes and is a polynomial in $s$
of degree at most $8$. Thus, since $q > 19$, $s^4f(s)=ts^4$ for all $s$,
whence $f(s)=f(1) = \tr(1)=2$. It follows that $\tr(x)=2$. However, the only
elements in $\SL_2(q)$ with trace $2$ are unipotent,
a contradiction.

Suppose that $G=\PSU_3(q)$ with $q \ge 3$. By Lemma~\ref{lem:4},
a nontrivial element of $C$ be must the product of two pseudo-reflections whence
fixes a $1$-space and so either is contained in $\SL_2(q)$ or a Borel
subgroup, a contradiction.

Next suppose that $G=\tw2B_2(q^2)$ with $q^2=2^{2a+1}$, $a\ge1$. Since
every nontrivial element of $C$ is contained in a unique maximal
subgroup, it follows that $p$ divides $q^2 \pm\sqrt{2}q + 1$. Note
that $|G|= q^4(q^4+1)(q^2-1)$. Let $B$ be a Borel subgroup of $|G|$.
If $C$ contains at least two nontrivial
conjugacy classes, then we argue as for the case $\PSL_2(3^e)$
and see that $|C(a,b, c)| > 1$ for some distinct $a,b,c \in G/B$ and
get a contradiction. If $C$ consists of a single nontrivial class, then
we also argue as for $\PSL_2(3^e)$ (conjugating a fixed $x$
by the $q^2-1$ elements in a torus $T \le B$). We conclude that
$\tr(x) = 0$ for all $x \in C$ (in the $4$-dimensional representation).
Now 5-elements have trace~$-1$, while for $p \ne 5$ it is straightforward to
see that nontrivial $p$-elements do not have trace in $\FF_2$, a contradiction.

Finally suppose that $G={^2}G_2(q^2)$ with $q^2=3^{2a+1}$, $a\ge 1$.
Note that the order of $G$ is $q^6(q^6+1)(q^2-1)$. The maximal tori
of $G$ have order $q^2 \pm 1$ or $q^2 \pm\sqrt{3}q + 1$.
In the first two cases, the elements are contained in $\PSL_2(q^2)$,
whence the result follows by minimality. So we may assume that
$p$ divides $q^2 \pm \sqrt{3}q + 1$. Argue precisely as for
$\tw2B_2(q^2)$ to obtain a contradiction.
\end{proof}

\begin{lem}  \label{lem:classical}
 $G$ is not a classical group in characteristic $r \ne p$.
\end{lem}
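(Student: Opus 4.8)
The plan is to derive a contradiction with Lemma~\ref{lem:unique} by exhibiting an element of $C$ that is forced to lie in two distinct maximal subgroups of the simple classical group $G$. Write $V$ for the natural module and $n=\dim V$. Since $p\ne r$ and $p$ is odd (Lemma~\ref{lem:p odd}), every element of $C$ is semisimple; for $x\in C$ the key invariant is the support $\dim[V,x]=n-\dim C_V(x)$, or more precisely $n-\dim E$, where $E$ is an eigenspace of a linear preimage of $x$ of largest dimension. The point of the latter is that $x$ acts as a scalar on $E$ and therefore stabilizes \emph{every} subspace of $E$, so any two subspaces of $E$ with distinct maximal stabilizers will contradict the uniqueness in Lemma~\ref{lem:unique}.

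First I would locate an involution $g\in G$ of small support. In odd characteristic one takes the image of $\mathrm{diag}(-1,-1,1,\dots,1)$ in the linear case, a rotation in a nondegenerate $2$-space (a product of two reflections, hence lying in the simple group) in the orthogonal case, and the analogous involution with a $2$-dimensional $(-1)$-eigenspace in the symplectic and unitary cases, so that $\dim[V,g]=2$; in characteristic~$2$ a transvection gives $\dim[V,g]=1$. Applying Lemma~\ref{lem:4}(b) to $g$ produces a nontrivial $x\in C$ which is a product $g_1g_2$ of two $G$-conjugates of $g$. Since $(g_1g_2-1)v=g_1(g_2-1)v+(g_1-1)v$, we get $[V,x]\subseteq g_1[V,g_2]+[V,g_1]$ and hence $\dim[V,x]\le 2\dim[V,g]\le 4$. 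Equivalently, $x$ acts as a scalar on a subspace $E$ with $\dim E\ge n-4$ (and $\ge n-2$ in characteristic~$2$), obtained as the intersection of the two dominant eigenspaces of $g_1$ and $g_2$.

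The main step is then the overgroup argument. If $n\ge 6$ then $\dim E\ge 2$, so I may choose a chain $U_1\subset U_2\subseteq E$ of subspaces of dimensions $1$ and $2$ whose stabilizers $G_{U_1}$ and $G_{U_2}$ are maximal subgroups of $G$; as $x$ acts as a scalar on $E$ it lies in both, and these are distinct (two distinct maximal parabolics, or a parabolic together with a reducible $\mathcal C_1$-type subgroup), contradicting Lemma~\ref{lem:unique}. I expect the delicate part to be guaranteeing that $U_1,U_2$ can be chosen so that both stabilizers are genuinely maximal and genuinely distinct across all the classical types: this means selecting the correct isometry type (totally singular versus nondegenerate) of the $U_i$ inside $E$, and avoiding the low-dimensional degeneracies where a subspace stabilizer fails to be maximal (for example the two families of maximal totally singular subspaces in $\OO_{2n}^+$, or small-dimensional coincidences). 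This is the principal obstacle, and it is precisely why the support bound is driven down to $\le 4$ rather than settling for a single fixed subspace.

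Finally I would clear up the small cases left by the bound, namely $n\le 5$, the relevant groups being $\PSL_3(q),\PSL_4(q),\PSL_5(q),\PSU_4(q),\PSU_5(q)$ and $\Sp_4(q)$ (the characteristic~$2$ bound already reduces those groups to $\PSL_3(q)$). The exceptional isomorphisms $\OO_3\cong\PSL_2$, $\OO_5\cong\Sp_4$, $\OO_6^+\cong\PSL_4$ and $\OO_6^-\cong\PSU_4$, together with the rank-one result Lemma~\ref{lem:rank1} and the earlier treatment of $\PSU_3$, dispose of the orthogonal cases and reduce the rest to a short explicit list. The genuinely remaining groups are then handled by the same local analysis applied to the $2$- or $3$-dimensional subspace on which $x$ is supported, or by a direct computation of structure constants exhibiting a commutator of elements of $C$ that is not a $p$-element, which is the contradiction we seek.
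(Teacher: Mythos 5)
Your proposal is correct in outline and follows the same skeleton as the paper's proof: both start from Lemma~\ref{lem:4}(b) applied to an involution of small support on the natural module $V$, deduce that some nontrivial $x\in C$ satisfies $\dim[x,V]\le 4$ (so $x$ is the identity on a large subspace $E$), and then derive a contradiction from Lemma~\ref{lem:unique}, disposing of the low-dimensional leftovers via the exceptional isomorphisms and Lemma~\ref{lem:rank1}. Where you genuinely diverge is the finishing mechanism. You invoke only the \emph{uniqueness} clause of Lemma~\ref{lem:unique}, so you must exhibit two distinct maximal overgroups of $x$; this is exactly what forces you into the delicate verifications you flag: maximality of the subspace stabilizers (Aschbacher $\mathcal{C}_1$ exceptions), the fact that the stabilizer of a \emph{degenerate} subspace $U_2\subseteq E$ is not maximal (so the chain $U_1\subset U_2$ must be chosen with the right isometry types), whether a product of two reflections has trivial spinor norm and hence really lies in the simple orthogonal group, and the absence of transvections in the simple orthogonal groups in characteristic~$2$. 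The paper instead uses the stronger containment clause $C\cap M\subseteq O_p(M)$: for any proper parabolic $P$ one has $F^*(P)=O_r(P)$ with $r\ne p$, hence $O_p(P)=1$, so the moment $x$ fixes a nonzero vector (linear and symplectic cases) or a singular vector (unitary and orthogonal cases) it lies in a parabolic and is therefore trivial --- a single invariant subspace suffices and no maximality or distinctness of stabilizers need ever be checked. The paper also sharpens the linear case by allowing the involution in Lemma~\ref{lem:4} to be taken in $\PGL_n(q)$ (a reflection or transvection, support~$1$), legitimate because such involutions preserve every semisimple class; this gives $\dim[x,V]\le 2$ and reduces $\PSL_n(q)$ directly to $\PSL_2(q)$, eliminating your residual list $\PSL_3,\PSL_4,\PSL_5$, whereas your insistence on staying inside $G$ costs a factor of two in the support bound and leaves more small cases to ad hoc computation. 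In short: your route can be made to work, but at the price of precisely the classification-of-maximal-subgroups issues you identify; replacing the two-stabilizer trick by the observation that $x$ already lies in a parabolic with trivial $O_p$ makes those issues evaporate.
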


\begin{proof}
Let $V$ be the natural module for the quasi-simple classical group with
factor group $G$.

If $G=\PSL_n(q)$, then in fact in Lemma~\ref{lem:4} we may choose an involution
in $\PGL_n(q)$ (because it preserves the conjugacy class of any semisimple
element). So we see that a nontrivial element $x \in C$ can be written as
a product of either
two reflections or two transvections, whence $x$ centralizes a subspace
of codimension $2$. Since $x$ is not contained in a proper parabolic subgroup
$P$ (since $O_p(P)$ is trivial), minimality implies $G=\PSL_2(q)$,
contradicting Lemma~\ref{lem:rank1}.

If $G=\PSU_n(q)$, $n \ge 3$, then we see that there is $x\in C$ with
$\dim [x,V] \le 2$ as well.
It follows that $x \in \SL_2(q)$ or is contained in a parabolic subgroup,
a contradiction.

Suppose that $G=\PSp_{2n}(q)$ with $n\ge2$ (note that $\PSp_4(2)'\cong\fA_6$
was already handled). So some (nontrivial) element $x \in C$ is a product
of two involutions with two nontrivial eigenvalues.
Thus, $\dim [x,V] \le 4$. If $x$ has a non-zero fixed space on
$V$, then $x$ is contained in a parabolic subgroup, a contradiction.
So $n=2$ and $C$ intersects $\SL_2(q)$ or $\PSL_2(q^2)$, a contradiction.

Finally, assume that $G$ is an orthogonal group. We can then assume
that $\dim V \ge 7$ (since the smaller orthogonal groups are isomorphic
to groups we have already handled). On the other hand, the argument above
shows that $\dim [x,V] \le 4$ for some $x \in C$. Then $x$ fixes a singular
vector and so is in a parabolic subgroup, a contradiction.
\end{proof}

\begin{lem}   \label{lem:exceptional}
 $G$ is not an exceptional group of Lie type.
\end{lem}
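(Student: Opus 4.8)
The plan is to argue, as in the classical case (Lemma~\ref{lem:classical}), by producing a nontrivial element of $C$ that is forced either to lie in a proper subgroup of $G$ of a type already excluded, or to lie in more than one maximal subgroup of $G$. Throughout, $G$ is simple exceptional of Lie type in characteristic $r\ne p$ with $p$ odd (characteristic $p$ being excluded by Lemma~\ref{lem:natural}), so every nontrivial element of $C$ is a \emph{semisimple} $p$-element. The main input is Lemma~\ref{lem:4}(b): since $|G|$ is even, any involution $g\in G$ inverts some nontrivial $x\in C$, and such an $x$ is a product of two $G$-conjugates of $g$.

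First I would dispose of the groups of rank $\le 2$, namely $G_2(q)$, $\tw3D_4(q)$ and $\tw2F_4(q^2)$ (the Tits group $\tw2F_4(2)'$ being left to the sporadic analysis). For $\tw3D_4(q)$ every class of semisimple $p$-elements meets the subgroup $G_2(q)$, which reduces this case to $G_2(q)$. For $G_2(q)$ and $\tw2F_4(q^2)$ I would argue directly from the known class data in \cite{Chv}: each involution centralizer contains a factor of type $A_1$, and an $x\in C$ inverted by such an involution is confined to a reductive subgroup of type $\SL_2\times\SL_2$ or to the normalizer of a maximal torus, whence $x$ lies either in a subgroup already excluded by Lemma~\ref{lem:rank1} or~\ref{lem:classical}, or in two distinct maximal subgroups, contradicting the uniqueness in Lemma~\ref{lem:unique}.

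For the groups of large rank ($F_4,E_6,\tw2E_6,E_7,E_8$) the key point is that a nontrivial element of order $p$ is a \emph{non-regular} semisimple element, so its connected centralizer is a proper reductive subgroup of maximal rank. Using the maximal-rank subgroups $H$ of Table~\ref{tab:exc2} (for $p=3$ one may instead invoke Table~\ref{tab:exc3} and Lemma~\ref{lem:sub3}; for $p\ge5$ the corresponding subgroup analysis of \cite[Thm.~8.4]{GMT}), every class of $p$-elements has a non-central representative inside a subgroup $H$ of classical type, such as $D_8\le E_8$, $D_6T\le E_7$, $A_5T$ or $F_4\le E_6$, and $B_4\le F_4$. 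Combining this with Lemma~\ref{lem:4}(b) I would locate a nontrivial $x\in C$ whose centralizer contains a subsystem subgroup of rank $\ge 2$; such an $x$ should then lie in at least two distinct maximal-rank or parabolic overgroups, contradicting Lemma~\ref{lem:unique}. Where it is cleaner, one instead confines $x$ to the classical subgroup $H$ and finishes through Lemma~\ref{lem:classical}.

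The principal obstacle is that, unlike in the classical case, there is no natural module on which to measure a ``support'' $\dim[x,V]$; the constraint on $x$ must instead be read off from the centralizer structure of semisimple $p$-elements, and one must establish non-regularity together with the existence of a second maximal overgroup uniformly in $p$. This is cleanest for the untwisted high-rank groups and for $p=3$, where Lemma~\ref{lem:sub3} applies verbatim; the twisted and small-rank groups $\tw2E_6(q)$, $\tw3D_4(q)$, $G_2(q)$ and $\tw2F_4(q^2)$, where the generic non-regularity argument degenerates, are expected to require case-by-case verification with \Chevie\ and the known character tables.
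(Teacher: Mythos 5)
Your central structural claim for the high-rank groups is backwards, and this is a fatal gap rather than a technicality. In the paper's setting the nontrivial elements of $C$ are \emph{regular} semisimple: by Lemma~\ref{lem:unique} each $x\in C$ lies in a unique maximal subgroup $M$ with $C\cap M\subseteq O_p(M)$, and since $F^*(P)=O_r(P)$ forces $O_p(P)=1$ for every proper parabolic $P$, no nontrivial element of $C$ lies in any parabolic; by Borel--Tits a non-regular semisimple element centralizes a nontrivial unipotent element and hence \emph{does} lie in a parabolic. So regularity, not non-regularity, is what holds. Your assertion that elements ``of order $p$'' are non-regular also ignores that $C$ consists of $p$-elements of arbitrary $p$-power order, and it is false for large primes in any case: elements of order dividing $q^4-q^2+1$ in $\tw3D_4(q)$, or of order a primitive prime divisor of $\Phi_{30}(q)$ in $E_8(q)$, are regular. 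These elements, sitting in cyclic maximal tori whose normalizers are their \emph{unique} maximal overgroups, are precisely the hard case, and for them both of your devices collapse: they have no representative in the classical subgroups of Tables~\ref{tab:exc2} and~\ref{tab:exc3} (those tables concern involutions and elements of order~$3$ only), and there is no second maximal overgroup with which to contradict Lemma~\ref{lem:unique}. Your reduction of $\tw3D_4(q)$ to $G_2(q)$ fails for the same reason: $\Phi_{12}(q)=q^4-q^2+1$ does not divide $|G_2(q)|$, so the relevant classes never meet $G_2(q)$.

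The paper's actual mechanism is different and does not appear in your proposal. Having established regularity, for rank at least~$4$ it takes an involution $z$ with large centralizer from Table~\ref{tab:inv}; by Lemma~\ref{lem:4}(b) $z$ inverts some nontrivial $x\in C$, so $x$ is a product of two conjugates of $z$, and since those two conjugates then generate a group containing a regular element, a dimension count in the algebraic group $X$ gives $2\dim C_X(z)<\dim X+\rank(X)$ --- contradicting the centralizer dimensions in the table. For $\tw3D_4(q)$ it computes with \Chevie{} that $C_0C_0$ contains the class $D$ of long root elements, while $D^{-1}D$ contains no regular semisimple element, so $C$ cannot be closed under commutators; for $G_2(q)$ every maximal torus lies in $\SL_3(q)$ or $\SU_3(q)$ and one finishes by the classical case. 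So the missing ingredients are not confined to the twisted and small-rank cases you flag at the end: the main line of your argument rests on a false dichotomy, and the genuinely difficult configurations (regular elements in maximal tori with maximal normalizer) are exactly the ones it cannot see.
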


\begin{proof}
Since we have handled the rank one groups, we assume that $G$ has rank at
least $2$.

Assume that $G$ is defined over the field of $q$ elements. Let $1\ne x\in C$.
Note that $x$ is not contained in a proper parabolic subgroup $M$ (by
induction as $F^*(M)=O_r(M)$ where $r \ne p$ is the prime dividing $q$).
Thus, $x$ is a regular semisimple element.

If $G=G_2(q)$, every $p$-element with $p$ not dividing $q$ is contained
in a maximal torus and every maximal torus is contained in a subgroup
$\SL_3(q)$ or $\SU_3(q)$.

Suppose that $G=\tw3D_4(q)$, $q$ odd. Since nontrivial elements of $C$ are
contained in a unique maximal subgroup by Lemma~\ref{lem:unique}, it follows
that $C$ consist of elements in the cyclic maximal torus of order $q^4-q^2+1$.
Let $C_0$ be a conjugacy class contained in $C$. From the generic character
table of $G$ one computes in \Chevie{} \cite{Chv} that $C_0C_0$ contains the
class $D$ of long root elements in $G$. However, on the $8$-dimensional
natural module, long root elements fix a $6$-dimensional space.
Thus, $DD^{-1}$ contains no regular semisimple elements in $G$.
So choose $x_1, x_2 \in C$ so that $x_1x_2 = d$ is a long root element.
Then $[x_1,x_2]= (x_2x_1)^{-1} x_1x_2 \in D^{-1}D$ is not a regular
semisimple element, hence not in $C$.

So we may assume that $G$ has rank at least $4$. Let $z\in G$ be an involution.
By Lemma~\ref{lem:4}, $z$ inverts some element of $C$ and so in particular a
regular semisimple element of $G$. It follows that two suitable conjugates of
$z$ have centralizer in the underlying algebraic group $X$ of dimension less
than $r=\rank(X)$ (since two conjugates of $z$ generate a subgroup containing
a regular semisimple element).

This implies that $2\dim C_X(z) < \dim X + r$, but by inspection there are
involutions in $X$ (defined over the prime field, and inside any
$\tw2E_6(q)$) with bigger centralizer, see Table~\ref{tab:inv}.
\end{proof}

\begin{table}[htbp]
\caption{Involution centralizers}   \label{tab:inv}
\[\begin{array}{|r|cccc|}
\hline
     X& F_4& E_6& E_7& E_8\\
\hline
 C_X(z)'& B_4& D_5& E_6& E_7+A_1\\
 \dim C_X(z)& 36& 46& 79& 136\\
\hline
\end{array}\]
\end{table}

\begin{lem}  \label{lem:spor}
 $G$ is not a sporadic group.
\end{lem}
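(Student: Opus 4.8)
The plan is to dispose of the sporadic case by a finite computation with the known character tables, steered by the constraints already assembled. Recall that $p$ is odd (Lemma~\ref{lem:p odd}), that $G$ is simple (Lemma~\ref{lem:simple}), and, crucially, that every nontrivial $x\in C$ lies in a \emph{unique} maximal subgroup of $G$ (Lemma~\ref{lem:unique}). The first step is to run through the $26$ sporadic groups and, for each, use the Atlas list of maximal subgroups to determine the primes $p$ and classes of $p$-elements for which this unique-maximal-subgroup property can hold. This is very restrictive: a $p$-element lying in two or more maximal subgroups is immediately excluded, so in practice only elements of order a power of a large prime $p$ survive, namely those for which $N_G(\langle x\rangle)$ is forced to lie in a single maximal subgroup (typically a Frobenius-type normalizer when $p$ divides $|G|$ to the first power). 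For instance, in $M_{11}$ only elements of order $11$, lying in the unique maximal subgroup $\PSL_2(11)$, need be considered.

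For each surviving triple the second step is to rule it out by an explicit class computation. By Lemma~\ref{lem:nci} there exist $a,b\in C$ with $[a,b]\ne1$, and writing $[a,b]=a^{-1}a^{b}$ exhibits each such commutator as a product of two elements of $C$ (using that $C$ is closed under inverses, Lemma~\ref{lem:inverses}, and under conjugation); by hypothesis it must again be a $p$-element. Using the GAP character table library I would compute the classes occurring among these products, respectively the commutator map $(a,b)\mapsto[a,b]$ for $a,b$ in the admissible $p$-classes, and exhibit a pair with $[a,b]$ of order divisible by a prime other than $p$, contradicting closure of $C$ under commutators. As an additional filter I would invoke Lemma~\ref{lem:4}(b): a fixed involution $g\in G$ inverts a nontrivial element of $C$ that is a product of two conjugates of $g$, so the known class multiplication of $g$ with itself constrains the admissible element orders, and hence $p$, before any structure-constant calculation.

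A small point of care arises when $p=5$, because of the $\fA_5$ phenomenon recorded in Theorem~\ref{thm:mainB}: in that configuration the commutators of $5$-elements again lie among the $5$-elements. Here, however, the standing hypothesis that $C$ is closed under squaring forces $C$ to meet both classes of elements of order $5$ inside any such section, exactly as in the proof of Lemma~\ref{lem:alt}, and the offending configurations are thereby excluded.

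The main obstacle is entirely computational and concentrated in the largest groups, above all the Baby Monster and the Monster, where both the maximal-subgroup information needed in the first step and the cost of structure-constant computations over the full character table are heaviest. For these I would lean on the fact that the only surviving primes $p$ force $x$ into $N_G(\langle x\rangle)$ as its unique maximal overgroup, and then either quote the explicitly tabulated class-multiplication data or reduce the commutator computation to a suitable subgroup containing representatives of all the relevant classes, so that the final check remains feasible.
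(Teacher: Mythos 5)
Your plan shares two ingredients with the paper's proof --- the filter via Lemma~\ref{lem:4}(b) (which \emph{is} a legitimate character-table check: it asks whether some class of nontrivial $p$-elements lies in $I\cdot I$ for an involution class $I$, and this is exactly the ``not inverted by involutions'' column of the paper's Table~\ref{tab}) and the handling of $p=5$ via closure under squaring, which is fine. But your finishing move contains a genuine gap. Structure constants from the GAP character table library control \emph{products} of classes: they certify the existence of $a\in A$, $b\in B$ with $ab\in Z$. They do not compute the commutator map restricted to classes. Since $[a,b]=a^{-1}a^b$, knowing that $A^{-1}A$ meets a class of non-$p$-elements only produces commutators $[a,g]$ with $g\in G$ \emph{arbitrary}, whereas the hypothesis you must contradict concerns commutators of two elements of $C$; pinning $b$ to a prescribed class amounts to counting elements of that class in a coset of $C_G(a)$, which is not character-table data. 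This is precisely why the paper does not finish the residual cases with structure constants but instead exhibits explicit pairs: for $J_1$, $p=19$ in the $7$-dimensional representation over $\FF_7$, and for $Ly$, $p=37,67$ in the $111$-dimensional representation over $\FF_5$. Your parenthetical fallback (exhibit a pair, or compute inside a proper subgroup containing a class representative, which is legitimate since $C$ is normal in $G$) is the correct fix, but it is a constructive computation in a concrete representation, not a character-table calculation.

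The second gap is in your step 1. For the Monster, ``the Atlas list of maximal subgroups'' is incomplete, and the classification of its maximal subgroups was open (the unresolved candidates being exactly almost simple groups that would contain elements of the large prime orders your filter leaves over), so one cannot ``determine the classes for which the unique-maximal-subgroup property can hold'' this way; your proposed shortcut for $B$ and $M$ presupposes the missing classification. The paper needs no maximal-subgroup data at all for these groups: it first quotes Aschbacher's memoir on overgroups of Sylow subgroups in sporadic groups to eliminate every case with $|P|>p$ except $(J_3,p=3)$ (note your claim that only large primes survive is also too optimistic --- $p=3,5,7$ survive in several groups), and then, when $|P|=p$ and $G$ has at least two classes of involutions (as $B$ and $M$ do), it argues as follows: since all subgroups of order $p$ are conjugate, each involution class inverts every nontrivial $y\in C$ up to conjugacy; two non-conjugate involutions inverting the same $y$ generate a dihedral group of order divisible by $4$, whose central involution $z$ centralizes $y$; minimality of the counterexample then forces $y\in O_p(C_G(z))$, which inspection of the known involution centralizers (Gorenstein--Lyons) rules out. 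The remaining single-involution-class groups are dispatched by the Table~\ref{tab} overgroups plus induction. Without a substitute for the Sylow-overgroup theorem and this dihedral-group argument, your reduction does not close for the largest sporadic groups.
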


\begin{proof}
Let $P$ denote a Sylow $p$-subgroup of $G$. If $P$ has order greater than $p$,
then it follows by \cite{As2} that $P$ is not contained in a unique maximal
subgroup unless $p=3$ and $G=J_3$. Considering the structure of this subgroup
shows that $C$ must contain elements of order $3$. No element of
order $3$ is in a unique maximal subgroup.

So we may assume that $P$ has order $p$. If $G$ contains two classes of
involutions, then since each class inverts $y \in C$, it follows that
$y \in C_G(z)$ for some involution $z$. Indeed, then the product of the
two involutions centralizes $y$ and these involutions generate a dihedral
group of order divisible by $4$ (because not all its involutions are conjugate)
and so the central involution in this dihedral group centralizes $y$.
Inspection of the centralizers of involutions (cf. \cite{GL})
shows that $y$ is not in $O_p(C_G(z))$, a contradiction.

Most of the remaining possibilities are listed in Table~\ref{tab}, which for
the relevant primes either gives an overgroup $H>P$ for which the statement
is known by induction, or the statement that $p$-elements are not inverted by
involutions, as would have to be the case by Lemma~\ref{lem:4}  --- here,
$z$ denotes an involution. Note that the Sylow $5$-subgroup of $J_2$ is
elementary abelian of order~25; one of the two classes of cyclic subgroups of
order~5 is contained in $3.\fA_6$, the other in an $\fA_5$.

We are then only left with the following two configurations:

$G=J_1$, $p=19$: here by explicit computation with the 7-dimensional
representation over $\FF_7$ one just exhibits pairs of non-commuting conjugate
elements of order $19$ whose commutator has order prime to $19$.

$G=Ly$; $p=37$ or $67$: one computes directly with the 111-dimensional
representation over $\FF_5$.
\end{proof}

\begin{table}[htbp]
\caption{Sporadic groups}   \label{tab}
\[\begin{array}{|r|r|r|}
\hline
 G& \text{overgroup of $P$}& \text{not inverted}\cr
  &                 & \text{by involution}\cr
\hline
 M_{11}& \SL_2(3)\,(p=3),\ \PSL_2(11)\,(p=5,11)& \\
 M_{22}& \PSL_2(11)\,(p=5,11),\ \fA_7\,(p=7)& \\
 M_{23}& \PSL_2(11)\,(p=5,11)& p=7,23\\
    J_1& 7.3\,(p=3),\ \PSL_2(11)\,(p=5,11),\ C(z)\,(p=7)& \\
    J_2& 3.\fA_6\,(p=5),\ \fA_5\,(p=5)& \\
    J_3& C(z)\,(p=5),\ \PSL_2(17)\,(p=17)& p=19\\
   McL& \PSL_2(11)\,(p=11)& p=7\\
    Ly& 2.\fA_{11}\,(p=7),\ 2.\fA_{11}\,(p=11),\ 5^3.\PSL_3(5)\,(p=31)& \\
    ON& \fA_6\,(p=5),\ \PSL_3(7)\,(p=7),\ J_1\,(p=11,19)& p=31\\
   F_3& G_2(3)\,(p=13),\ \PSU_3(8)\,(p=19)& p=31\\
\hline
\end{array}\]
\end{table}

\section{Commutators of $5$-elements}

We now consider the remainder of Theorem \ref{thm:mainB}. The proof is quite
similar to the previous result -- a bit trickier because of the weaker inductive
hypothesis. We give a sketch.

\begin{thm}  \label{thm:main2}
 Let $G$ be a finite group and $C$ a normal set of $5$-elements
 that is closed under taking commutators. Then
 $\langle C \rangle O_5(G)/O_5(G)$ is a direct product of copies of $\fA_5$.
\end{thm}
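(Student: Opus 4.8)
The plan is to run the same minimal-counterexample scheme as in the proof of Theorem~\ref{thm:main1}, carrying the weaker conclusion ``direct product of copies of $\fA_5$'' through the induction. So I would take $G$ with $|G|+|C|$ minimal among groups for which $\langle C\rangle O_5(G)/O_5(G)$ fails to be such a product, pass to $G/O_5(G)$ to assume $O_5(G)=1$, and replace $G$ by $\langle C\rangle$. Exactly as in Lemma~\ref{lem:inverses}, $[x,y][y,x]=1$ shows that $C$ is inverse-closed; and passing to the set $D$ of commutators $[x,y]$ with $x,y\in C$, which is contained in $C$, lets me assume that every element of $C$ is a commutator and that $G$ is perfect: otherwise $\langle D\rangle$ is by minimality a product of copies of $\fA_5$ while $G/\langle D\rangle$ is an abelian $5$-group, and closure of $C$ under commutators forces the latter to be trivial.

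The heart of the reduction is the analogue of Lemmas~\ref{lem:unique} and~\ref{lem:simple}: I want the minimal counterexample to be simple. Here induction only tells me that inside a proper subgroup $M$ or a proper quotient the image of $C$ generates, modulo its $5$-core, a product of copies of $\fA_5$ rather than a $5$-group, so the Wielandt argument of Lemma~\ref{lem:unique} must be supplemented --- for $x\in C$ in a maximal subgroup $M$, its image in $M/O_5(M)$ is a $5$-element of a product of $\fA_5$'s, and $\langle x\rangle$ need no longer be subnormal. I would first exclude abelian minimal normal subgroups: a commutator of two elements of $C$ landing in an elementary abelian $r$-subgroup ($r\ne5$, as $O_5(G)=1$) is at once an $r$-element and a $5$-element, hence trivial, so $[C,C]\cap N=1$; with perfectness this rules out an abelian minimal normal subgroup, and the same device excludes a central $2$-part (so that, e.g., $\SL_2(5)$ does not intrude, its unipotent commutators picking up the central involution). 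With $F^*(G)=L_1\times\cdots\times L_t$ a product of nonabelian simple groups, the usual subdirect-product argument applied to two distinct minimal normal subgroups --- each quotient being a product of $\fA_5$'s by induction --- forces $G$ itself to be such a product unless $G$ is simple. Controlling $O_5$ of these quotients under the weaker induction is the delicate point, and I expect it to be the main obstacle.

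Once $G$ is simple I would reproduce the classification analysis of Lemmas~\ref{lem:alt}--\ref{lem:spor} essentially verbatim, the single change being that closure under squares is no longer available. Every argument there exhibiting a commutator of order prime to $5$ survives, since none of them used squaring; the only places where the $p=5$ exclusion relied on squaring are the small $\PSL_2$ and alternating cases, where $\fA_5\cong\PSL_2(4)\cong\PSL_2(5)$ now genuinely survives (indeed $[C,C]=C\cup\{1\}$ there). Re-examining those cases shows $\fA_5$ is the unique survivor: for $\fA_n$ with $n\ge6$, and for $\PSL_2(5^a)$ with $a>1$ or $\PSL_2(q)$ with $q\ge7$, one still finds commutators of $5$-cycles, respectively of unipotent elements, of order prime to $5$. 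Hence the simple counterexample must be $\fA_5$ --- which is not a counterexample --- a contradiction. Finally, to see that the examples are genuinely \emph{direct} products I would use that $\operatorname{Out}(\fA_5)$ has order $2$, so no outer $5$-element acts on a single factor, while a $5$-element permuting five $\fA_5$-factors, combined with inner elements, again yields commutators of order prime to $5$ exactly as in the alternating analysis; thus $G=F^*(G)=\fA_5^t$.
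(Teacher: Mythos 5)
Your overall scheme is the paper's own: a minimal counterexample with $|G|+|C|$ minimal, reduction to a simple group, then a CFSG case analysis in which $\fA_5$ is the unique survivor once closure under squaring is unavailable; your treatment of the simple case (re-running Lemmas~\ref{lem:alt}--\ref{lem:spor} and noting that squaring was only used where $\fA_5$ now legitimately survives) is fine at the level of detail the paper itself gives. The genuine gaps are all in the reduction to simplicity, and they sit exactly where the paper has to work hardest. Your opening move --- ``either every element of $C$ is a commutator, or else $\langle D\rangle$ is a product of copies of $\fA_5$, $G/\langle D\rangle$ is an abelian $5$-group, and closure of $C$ under commutators forces the latter to be trivial'' --- asserts precisely what needs proof. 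Nothing you have said excludes a configuration such as $G=\fA_5\wr Z_5=\fA_5^5\rtimes Z_5$: there $O_5(G)=1$, $\langle D\rangle$ lies in the base group, and $G/\fA_5^5$ is a nontrivial abelian $5$-group; showing that no normal, commutator-closed set of $5$-elements generates such a group is part of the content of the theorem, not a formal consequence of minimality. The same omission undermines your route to $F^*(G)$: the subdirect-product argument requires \emph{two distinct} minimal normal subgroups, so it is vacuous when $G$ has a unique minimal normal subgroup $N=L_1\times\cdots\times L_t$ with $t>1$ (again, groups of $L\wr Z_5$ type), and you offer no substitute for that case.

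The missing ingredient in all of these spots is the paper's coprime-action machinery, which \emph{produces} forbidden commutators rather than merely observing that commutators landing in a $5'$-group would be trivial (your ``device'' is only the easy half). For a non-central abelian minimal normal subgroup $N$, one takes $y\in C$ not centralizing $N$ and uses $[y,[y,N]]=[y,N]\ne1$ to find $w$ with $1\ne[y,y^w]\in N$, contradicting that $[y,y^w]\in C$ is a $5$-element; ``perfectness'' plays no role here. For $N=L_1\times\cdots\times L_t$ with $t>1$, the paper takes a Sylow $5$-subgroup $R$ of $N$, quotes \cite[Thm.~X.8.13]{HB} to get $J=N_N(R)/R\ne1$ (a $5'$-group), picks $y\in C$ in a Sylow $5$-subgroup containing $R$ that does not normalize $L_1$, and applies the same coprime-action trick to $y$ acting on $J$, getting $[y,y^h]\in C$ whose image mod $R$ is a nontrivial element of $J$ --- not a $5$-element. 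Finally, when $N$ is the unique minimal normal subgroup and simple, the paper does use your $D\subsetneq C$ idea, but closes it with $C_G(N)=1$, so that $\langle D\rangle=N\cong\fA_5$ and $G\le\Aut(\fA_5)=\fS_5$ is generated by $5$-elements, forcing $G=N$; this is the argument that actually kills the residual abelian $5$-quotient you waved away in your first step. You correctly flag ``controlling $O_5$ of the quotients'' as the main obstacle, but the paper's resolution is not better bookkeeping of $O_5$ --- it is these direct contradiction arguments, which your sketch never supplies. (Your central-$N$ discussion, by contrast, is essentially the paper's: Schur multiplier of $\fA_5$ plus the fact that in $\SL_2(5)$ a commutator of two elements of order $5$ can have order $10$.)
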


Let $G$ be a minimal counterexample (with $|G| +|C|$ minimal). Clearly,
we have that $O_5(G)=1$ and $G = \langle C \rangle$.

\begin{lem}   \label{lem:simple5}
 $G$ is simple.
\end{lem}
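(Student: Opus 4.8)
The plan is to mimic the structure of the proof of Lemma~\ref{lem:simple} from the previous section, adapting it to the weaker inductive hypothesis available here. Recall that $G$ is a minimal counterexample to Theorem~\ref{thm:main2} with $O_5(G)=1$ and $G=\langle C\rangle$. First I would take $N$ to be a minimal normal subgroup of $G$ and consider the image $\bar C$ of $C$ in $G/N$. Since $\bar C$ is again a normal set of $5$-elements closed under commutators and $|G/N|+|\bar C|<|G|+|C|$, minimality of the counterexample applies to $G/N$: thus $\langle\bar C\rangle O_5(G/N)/O_5(G/N)=\langle C\rangle N/N$ modulo its $5$-core is a direct product of copies of $\fA_5$. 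The goal is to force $N=G$.

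The key point is to exploit that $G=\langle C\rangle$ is \emph{perfect}, exactly as in Lemma~\ref{lem:simple}. Here the subtlety the authors flag (``a bit trickier because of the weaker inductive hypothesis'') is that the image of $C$ in $G/N$ need no longer generate a $5$-group; it only generates a quotient that is, modulo its $5$-core, a product of copies of $\fA_5$. So I would argue as follows. Since $G/N$ is perfect and $\langle C\rangle N/N$ modulo $O_5(G/N)$ is a product of $\fA_5$'s, and $O_5(G/N)$ would pull back to a normal $p$-subgroup situation, I would first rule out a nontrivial $O_5(G/N)$: a perfect group with this structure forces $O_5(G/N)=1$, so $G/N$ is itself a direct product of copies of $\fA_5$ (being perfect and centrally generated by its $\fA_5$ factors). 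Then I would use the uniqueness-of-maximal-subgroup machinery. Since $G$ is a counterexample and not of the allowed form, $G/N$ being a product of $\fA_5$'s together with perfection of $G$ should force $N=1$ or $N=G$; the former contradicts $G/N\ne 1$ being a nontrivial product of $\fA_5$'s unless $G$ itself is simple, and then $G$ is simple.

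More concretely, I would show that if $N\ne G$ then $G$ is already of the excluded form (a product of copies of $\fA_5$ modulo $O_5$), contradicting that $G$ is a counterexample, so we must have that $N$ is the unique minimal normal subgroup and, by perfection, $G=N$ is simple. The cleanest route is: any minimal normal subgroup $N$ gives a perfect quotient $G/N$ of the allowed type; if there were two distinct minimal normal subgroups $M,N$ then $G$ embeds in $G/M\times G/N$, both factors of allowed type, making $G$ itself of allowed type---a contradiction. Hence $N$ is unique. Then minimality applied to $G/N$ together with perfection of $G$ forces $G=N$, and since $N$ is a minimal normal subgroup of the perfect group $G$ with trivial $O_5$, $N$ is a direct product of isomorphic nonabelian simple groups; uniqueness and perfection then pin down $G$ as simple. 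The main obstacle I anticipate is the bookkeeping around $O_5(G/N)$: I must verify carefully that the inductive conclusion ``product of $\fA_5$'s modulo $O_5$'' combined with perfection really does eliminate the $5$-core, so that the embedding argument into a product of quotients of allowed type goes through cleanly.
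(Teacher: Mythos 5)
There is a genuine gap, and it is exactly at the point the authors flag as making this case ``trickier.'' Your proposal rests on the assertion that $G$ is perfect, ``exactly as in Lemma~\ref{lem:simple}.'' But the perfection argument of Section~\ref{sec:commut} works only because there, replacing $C$ by its set $D$ of commutators and invoking minimality forces $\langle D\rangle\le O_p(G)=1$, whence $G$ is abelian and a contradiction ensues. For $p=5$ the inductive conclusion is merely that $\langle D\rangle O_5(G)/O_5(G)$ is a direct product of copies of $\fA_5$; this does not make $D$ trivial, so no contradiction and no perfection. This is precisely why the paper postpones the reduction ``every element of $C$ is a commutator of elements of $C$'' until \emph{after} the simplicity lemma. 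Moreover, the two structural consequences you try to extract from perfection are false as group-theoretic statements: a perfect group whose quotient modulo its $5$-core is a product of $\fA_5$'s can have nontrivial $5$-core (take the semidirect product of $\fA_5$ with a nontrivial irreducible $\FF_5$-module, which is perfect), and a minimal normal subgroup of a perfect group with trivial $O_5$ need not be a product of nonabelian simple groups: $\SL_2(5)$ is perfect, has $O_5=1$, and its unique minimal normal subgroup is its center of order~$2$.

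That last counterexample is not incidental; it is the hard case of the paper's proof, and your outline has no mechanism for it. If a minimal normal $N$ is central of order~$2$ and $G/N$ is a product of $\fA_5$'s, the components of $G$ are copies of $\SL_2(5)$, and the paper eliminates this configuration using the Schur multiplier of $\fA_5$ together with an explicit computation that in $\SL_2(5)$ the commutator of two conjugate $5$-elements can have order~$10$. Also absent are the other two essential cases: a non-central minimal normal $N$ of order prime to~$5$, killed by coprime action ($[y,[y,N]]=[y,N]$ yields $1\ne[y,y^w]\in N$ for some $w$, contradicting closure of $C$ under commutators), and $N$ a product of $t>1$ nonabelian simple groups, killed via the nontriviality of $N_N(R)/R$ for $R$ a Sylow $5$-subgroup of $N$ (\cite[Thm.~X.8.13]{HB}). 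What is sound in your proposal is the treatment of two distinct minimal normal subgroups, though the correct bookkeeping there is not perfection but the observation that the subgroup of $G$ mapping into $O_5(G/N_1)\times O_5(G/N_2)$ is a normal $5$-subgroup of $G$, hence trivial since $O_5(G)=1$; and the final step, which however requires the actual argument the paper gives: once $N$ is the unique minimal normal subgroup and nonabelian simple, $C_G(N)=1$ forces $G/N$ solvable, so if $G\ne N$ the commutator set $D$ is a proper normal subset of $C$, and minimality pins $\langle D\rangle\cong\fA_5=N$, whence $G=N$. ``Minimality plus perfection'' is not a substitute for these steps, since neither perfection nor the nonabelian simplicity of $N$ is available to you.
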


\begin{proof}
Let $N$ be a minimal normal subgroup of $G$.

Suppose that $N$ is central. Then $H:=G/N$ is a direct product of copies of
$\fA_5$ by minimality of $G$. If $|N| \ne 2$, then since the Schur multiplier
of $\fA_5$ has order $2$, it follows that $G = N \times H$ and since $G$ is
generated by $5$-elements, we obtain a contradiction.

So $N$ has order $2$ and $G/N$ is a product of more than one $A_5$. Then by
induction $G/Q$ is a product of $A_5$'s where $Q$ is some component.
If $Q = A_5$, then $G$ is a product of $A_5$'s. Thus every component is an
$\SL_2(5)$. Let $x\in C$ and write $x = (x_1,\ldots,x_t)$, where
$x_i\in Q_i$ (modulo some central element). Then conjugating by
$y = (y_1, 1,\ldots,1)$ we have that $[x,x^y]$ is a 5-element and so
$[x_1,x_1^{y_1}]$ is a 5-element in $Q_1$,  but in $\SL_2(5)$, we can arrange
that the commutator has order 10. So $Z(G)=1$.

If $N$ has order prime to $5$, choose $y \in C$ not commuting with $N$
(this is possible since $C$ generates $G$). By coprime action,
$[y, [y,N]]=[y,N]$ and so $1 \ne [y,y^w] \in N$ for some $w \in [y,N]$.
This contradicts our hypothesis that $C$ is closed under taking commutators.

So $N$ is a direct product $L_1 \times \cdots \times L_t$ where $L_i \cong L$
is a nonabelian simple group of order divisible by $5$. Suppose that $t > 1$.

Let $R:=R_1 \times \cdots \times R_t$ be a Sylow $5$-subgroup of $N$. Let
$R \le Q$ be a Sylow $5$-subgroup of $G$. We can choose $y \in Q$ such
that $y$ does not normalize $L_1$.

By \cite[Thm.~X.8.13]{HB}, $J:=N_N(R)/R = J_1 \times \cdots \times J_t$ is
nontrivial. Now consider the group $\langle J, y \rangle$. Then $J$ has order
prime to $5$ and $y$ does not centralize $J$, whence
as above, there exist $h \in J$ with $[y, y^h]$ a nontrivial element of $J$
and so $[y,y^h]$ is not a $5$-element, a contradiction.

So every minimal normal subgroup is a nonabelian simple group.
If $N_1$ and $N_2$ are distinct minimal normal subgroups, then by
induction $G/N_1 $ and $G/N_2$ are both products of $A_5$'s and since $G$
embeds in $H:=G/N_1\times G/N_2$ (and projects onto each simple factor) $G$
itself is also a product of $A_5$'s.

So $G$ has a unique minimal normal subgroup $N$ that is nonabelian
simple. We claim that $G=N$. If not, since $G/N$ is solvable,
it follows that $D$, the set of commutators of elements in $C$
is proper in $C$. If $D=1$, then $G$ is abelian, a contradiction.
Since $O_5(G)=1$, it follows that $\langle D \rangle \cong \fA_5 = N$
and the result holds. So $G=N$ is simple.
\end{proof}

We can assume that every element of $C$ is a commutator of a pair
of elements of $C$ (otherwise replace $C$ by this smaller set
of commutators).

We now can argue in a similar fashion to the proof in the previous section.
One has to do slightly more work (because we cannot appeal to Wielandt's
result).

\begin{lem}
 $G \ne \fA_n, n \ge 5$.
\end{lem}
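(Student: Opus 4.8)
The plan is to rule out $G=\fA_n$ by exhibiting two elements of $C$ whose commutator is not a $5$-element. First recall that $C$ is a normal set of $5$-elements which, as $[x,y][y,x]=1$, is closed under inverses (Lemma~\ref{lem:inverses}) as well as under commutators. Since in $\fA_5$ a single conjugacy class of $5$-cycles already satisfies $[C,C]=C\cup\{1\}$, so that $\fA_5$ meets the conclusion of Theorem~\ref{thm:main2}, the simple group $G=\langle C\rangle$ cannot be $\fA_5$; hence I may assume $n\ge6$.

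The key step, and the one I expect to be the main obstacle, is to show that $C$ contains a $5$-cycle. In the previous section such a statement followed from Lemma~\ref{lem:nci}, but that rests on the uniqueness of the maximal overgroup (Lemma~\ref{lem:unique}) and hence on Wielandt's theorem, which is not available under the weaker inductive hypothesis here. Instead I argue directly inside $\fA_n$. I first claim that for any nontrivial $x\in C$ there is a double transposition $g\in\fA_n$ with $[x,x^{-g}]\ne1$; this is proved by elementary point-chasing, with a small case distinction according to whether $x$ has a fixed point (if $x$ fixes a point, take $g$ interchanging it with a point of $\operatorname{supp}(x)$ and moving two further points; if $x$ is fixed-point-free, transpose two successive points of one cycle together with the next point), in each case producing a point at which $x$ and $x^{g}$ disagree. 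Granting this, $x^{-g}=(x^{g})^{-1}\in C$ and $[x,x^{-g}]\in C$, and the purely formal identity of Lemma~\ref{lem:4}(b) shows that the involution $g$ inverts the nontrivial element $y:=x[x,x^{-g}]x^{-1}\in C$. Since $g$ reverses each nontrivial cycle of the $5$-element $y$, and reversing a cycle of length $5^{j}$ moves $5^{j}-1\ge4$ points while $g$ moves only $4$, the element $y$ has a single nontrivial cycle, of length exactly $5$. Thus $C$ contains a $5$-cycle.

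To conclude, fix $c=(1\,2\,3\,4\,5)\in C$. As $C$ is a union of $\fA_n$-classes, it contains the entire $\fA_n$-class of $c$, and in particular the conjugate
$$c':=(1\,2)(5\,6)\,c\,(1\,2)(5\,6)=(2\,1\,3\,4\,6)$$
by the even permutation $(1\,2)(5\,6)\in\fA_n$, where $n\ge6$ is used. A direct computation gives
$$[c,c']=(1\,2\,6\,5)(3\,4),$$
an element of order $4$. Since $C$ is closed under commutators this element lies in $C$, contradicting that $C$ consists of $5$-elements. Hence $G\ne\fA_n$ for all $n\ge5$.
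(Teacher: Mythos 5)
Your proof follows the same route as the paper's own: use the formal inversion identity of Lemma~\ref{lem:4}(b) with a double transposition to force a $5$-cycle into $C$, dismiss $n=5$ because $\fA_5$ itself satisfies the conclusion of Theorem~\ref{thm:main2}, and then derive a contradiction from two $\fA_n$-conjugate $5$-cycles lying in a common natural $\fA_6$. Your final computation $[c,c']=(1\,2\,6\,5)(3\,4)$ is correct and is in fact more explicit than the paper's terse ``it suffices to check $\fA_6$''; you are also right that the inversion identity of Lemma~\ref{lem:4}(b) is purely formal, so it may be used here even though its proof in Section~\ref{sec:commut} rested on Wielandt's theorem --- provided one produces by hand an involution $g$ with $[x,x^{-g}]\ne1$.

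That production is where you have a genuine gap. You justify $[x,x^{-g}]\ne 1$ by ``producing a point at which $x$ and $x^{g}$ disagree''. That only proves $x^{g}\ne x$, whereas what is needed is that $x^{g}$ does not \emph{commute} with $x$; distinct conjugates can perfectly well commute. The worry is not vacuous in exactly this setting: a double transposition can normalize $\langle x\rangle$ --- for instance $(2\,5)(3\,4)$ conjugates $(1\,2\,3\,4\,5)$ to its inverse --- and whenever $x^{g}\in C_G(x)$ your element $y=x[x,x^{-g}]x^{-1}$ is trivial and the whole argument collapses. The gap is repairable with your own choices of $g$, but by a different argument than non-equality. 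In your first case ($p$ a fixed point of $x$, $g=(p\,q)(r\,s)$ with $q\in\operatorname{supp}(x)$), choose $r,s$ distinct from $x(q)$; then $x^{g}(p)=g(x(q))=x(q)\in\operatorname{supp}(x)$, whereas any permutation commuting with $x$ maps $\operatorname{fix}(x)$ into $\operatorname{fix}(x)$, so $x^{g}\notin C_G(x)$. In your second case (consecutive points $a_1,a_2,a_3,a_4$ of one cycle of $x$, $g=(a_1\,a_2)(a_3\,a_4)$), evaluate at $a_1$: one has $x^{g}x(a_1)=x^{g}(a_2)=a_1$ but $x\,x^{g}(a_1)=x(a_4)=a_5\ne a_1$ (the cycle has length at least $5$), so $x$ and $x^{g}$ do not commute. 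With either of these substituted for the non-equality remark, your proof is complete and coincides in substance with the paper's.
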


\begin{proof}
Let $1 \ne x$ be a nontrivial $5$-element.
Let $t$ be an involution moving $4$ points all contained
in a single orbit of $x$ so that $t$ does not invert $x$. Then $[x,x^t] \ne 1$
and as above, this implies that $t$ inverts a nontrivial element of $C$, whence
$x$ must be a $5$-cycle. If $n =5$, the conclusion is allowed and if
$n > 5$, it suffices to check $\fA_6$.
\end{proof}

\begin{lem}
 $G$ is not a finite group of Lie type in characteristic~$5$.
\end{lem}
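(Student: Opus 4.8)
The plan is to follow the template of Lemma~\ref{lem:natural}, but since the weaker inductive conclusion of Theorem~\ref{thm:main2} forbids the use of the uniqueness of maximal subgroups (Lemma~\ref{lem:unique}, Wielandt's theorem), I would replace every appeal to uniqueness by minimality of the counterexample, in the following form: if $H\lneq G$ and $C\cap H$ is nonempty, then $C\cap H$ is a normal, commutator-closed set of $5$-elements of $H$, so by minimality $\langle C\cap H\rangle O_5(H)/O_5(H)$ is a direct product of copies of $\fA_5$. Since $G$ is of Lie type in characteristic $5$, every element of $C$ is a nontrivial unipotent element.

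I would first treat the rank~$1$ groups, namely $\PSL_2(5^a)$ and $\PSU_3(5^a)$ (the Suzuki and Ree groups living in characteristics $2$ and $3$). For $\PSL_2(5^a)$, taking $u\in C$ upper unitriangular and a $G$-conjugate $u^g\in C$ that is lower unitriangular, a direct $2\times2$ computation in $\SL_2(5^a)$ gives $\tr[u,u^g]=2+(ab)^2$ for the relevant parameters $a,b$. Once $q\ge25$ one may choose admissible $a,b$ (in the square class fixed by the class of $u$) with $2+(ab)^2\ne\pm2$, so that $[u,u^g]$ is not unipotent, contradicting that $C$ is commutator-closed; the remaining case $q=5$ is excluded because $\PSL_2(5)\cong\fA_5$ and we already know $G\ne\fA_n$. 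For $\PSU_3(5^a)$, which is the model for everything that follows, the point is to land inside a root subgroup: as in Lemma~\ref{lem:natural}, either $C$ consists of transvections, or a commutator of two non-transvection unipotent elements of $C$ is a transvection, so in either case $C$ contains a root element $r$. Such an $r$ lies in a subgroup $L\cong\SL_2(5^a)$ whose central involution is non-central in $G$; hence $L$ embeds in $G$, and since $C$ is $G$-invariant, $C\cap L$ is the full $L$-invariant set of unipotent elements of $L$ and so generates $L$. By minimality $\langle C\cap L\rangle=\SL_2(5^a)$ would have to be a product of copies of $\fA_5$, which is impossible since $\SL_2(5^a)$ is perfect with centre of order~$2$ (in particular it is $2.\fA_5$, not $\fA_5$, when $q=5$).

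For the groups of rank at least~$2$, classical or exceptional, I would run exactly the same endgame: produce a root element inside $C$ and pass to its root subgroup $\SL_2(5^a)$, whose central involution is again non-central in $G$, so that $\langle C\cap\SL_2(5^a)\rangle=\SL_2(5^a)$ is not a product of copies of $\fA_5$. To exhibit the root element I would exploit that $C$ is commutator-closed and normal: a nontrivial $x\in C$ cannot commute with all its conjugates (otherwise $\langle x^G\rangle$ is a proper nontrivial normal subgroup of the simple group $G$), so iterated commutators with suitable conjugates can be used to drive an element of $C$ into the centre of a Sylow $5$-subgroup, which is a long root subgroup; alternatively, applying Lemma~\ref{lem:4}(a) with $g$ a long root element yields an element of $C$ of bounded support, after which one reduces inside a small-rank classical subgroup where a root element appears.

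The main obstacle is precisely this last step, namely showing uniformly across all Lie types of rank at least~$2$ that the commutator-closed normal set $C$ must contain a genuine root element. In Lemma~\ref{lem:natural} this was sidestepped entirely by Wielandt's theorem (a nontrivial unipotent element lies in at least two maximal parabolics, contradicting that it lies in a unique maximal subgroup); that shortcut is unavailable here, so instead one must control the descent of iterated conjugate-commutators through the lower central series of a Sylow $5$-subgroup, and rule out the possibility that $\langle C\cap H\rangle$ is a nontrivial product of copies of $\fA_5$ inside a proper reductive subgroup $H$. The compensating structural input, valid whenever $G\ne\PSL_2$, is that a root $\SL_2(5^a)$ carries an order-$2$ centre which is non-central in $G$ and hence survives; this is exactly what distinguishes the genuine exception $\fA_5\cong\PSL_2(5)$ from every other group of Lie type in characteristic~$5$.
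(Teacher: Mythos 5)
Your endgame is correct and is genuinely different from the paper's: if a long root element of $G$ lies in $C$, then the root subgroup $L\cong\SL_2(5^a)$ has $O_5(L)=1$, and $C\cap L$ is a nontrivial normal commutator-closed set of $5$-elements of $L$, so minimality forces $\langle C\cap L\rangle=L$ to be a direct product of copies of $\fA_5$ --- impossible since $Z(L)$ has order $2$. Your rank~$1$ analysis is also fine. But for rank at least~$2$ there is a genuine gap exactly where you flag it: you never produce a root element in $C$, and neither of your two proposed mechanisms can be made to work. The ``iterated commutators descend to $Z(P)$'' idea fails: commutator-closure only yields a nontrivial $x\in C\cap P$ commuting with all of $C\cap P$ (take $x$ nontrivial in the deepest term $\gamma_i(P)$ of the lower central series meeting $C$ nontrivially; then $[x,y]\in C\cap\gamma_{i+1}(P)=\{1\}$ for all $y\in C\cap P$), and since $\langle C\cap P\rangle$ need not equal $P$, this places nothing in $Z(P)$. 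Worse, your fallback via Lemma~\ref{lem:4}(a) is circular: Lemma~\ref{lem:4} rests on Lemma~\ref{lem:nci}, hence on Lemma~\ref{lem:unique} and Wielandt's theorem, which is precisely the tool you correctly declared unavailable at the outset, because under the weaker inductive conclusion an element of $C$ need not lie in $O_5(M)$ for every maximal subgroup $M$ containing it.

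The paper's missing idea sidesteps root elements entirely. For rank at least~$2$ it invokes \cite[\S2]{GS} to find a maximal end-node parabolic $M$ with $C\cap M\not\subseteq O_5(M)$. Minimality applied to $(M,C\cap M)$ then makes the image of $\langle C\cap M\rangle$ in $M/O_5(M)$ --- a quotient of the Levi factor --- a \emph{nontrivial} direct product of copies of $\fA_5$ generated by unipotent elements, which forces the derived subgroup of the Levi to be $\PSL_2(5)\cong\fA_5$. Since an end-node Levi of a rank~$r$ group has semisimple rank $r-1$, this pins $G$ down to rank~$2$ over $\FF_5$, and those finitely many groups are settled by inspection. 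If you wish to keep your root-$\SL_2$ contradiction, this citation is the input you need: the parabolic foothold it provides replaces the unproven claim that $C$ contains a root element.
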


\begin{proof}
If $G$ has rank $1$, we argue as earlier. If $G$ has rank at least $2$, we
can find a maximal end node parabolic $M$ such that $M \cap C$ is not
contained in $O_5(M)$ \cite[\S2]{GS}, whence the result follows by induction
unless the derived subgroup of the Levi subgroup is $\PSL_2(5)$. This implies
that $G$ is of rank $2$ defined over $\FF_5$ and an easy inspection completes
the proof.
\end{proof}

\begin{lem}
 $G$ is not a finite group of Lie type in characteristic $r \ne 5$.
\end{lem}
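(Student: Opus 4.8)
The plan is to mirror Lemmas~\ref{lem:rank1}, \ref{lem:classical} and~\ref{lem:exceptional}, while tracking the single genuine exception $\fA_5\cong\PSL_2(4)$ and accommodating the now weaker inductive hypothesis of Theorem~\ref{thm:main2}. Since $r\ne5$, all elements of $C$ are semisimple. As $\fA_5$ itself satisfies the conclusion — a single class $C$ of $5$-cycles in $\fA_5$ has $[C,C]=C\cup\{1\}$, so $C$ is closed under commutators and generates $\fA_5$ — it is not a counterexample; thus it suffices to prove that a simple group of Lie type in characteristic $r\ne5$ admitting such a set $C$ must be $\PSL_2(4)\cong\fA_5$, and since $\fA_5$ is not a counterexample this shows $G$ is not of this type.

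Two modifications are needed before the earlier arguments apply. First, Lemma~\ref{lem:4} cannot be quoted directly, as its proof passes through Lemma~\ref{lem:unique} and hence Wielandt's theorem, which may fail under the present induction; instead, as in the preceding treatment of $\fA_n$, the requisite involutions are exhibited by hand. Given an explicit involution $z$ and some $x\in C$ with $[x,x^{-z}]\ne1$, the formal identity in the proof of Lemma~\ref{lem:4} still produces $y=x[x,x^{-z}]x^{-1}\in C$, inverted by $z$ and equal to a product of two conjugates of $z$; closure of $C$ under inverses, Lemma~\ref{lem:inverses}, continues to hold once we reduce to the case that every element of $C$ is a commutator. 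Second, when such a $y$ is forced into a proper parabolic or Levi subgroup, the induction now returns a product of $\fA_5$'s instead of placing $y$ in $O_5$, so the clean conclusion that the $5$-elements are regular semisimple is no longer automatic. One therefore splits according to whether $C$ contains a regular semisimple element — in which case the dimension bound of Lemma~\ref{lem:classical} ($\dim[y,V]\le4$, forcing a fixed singular vector and passage to a smaller classical group) and the centralizer estimate of Lemma~\ref{lem:exceptional} ($2\dim C_X(z)<\dim X+\rank X$, contradicting Table~\ref{tab:inv}, with $G_2(q)$ and $\tw3D_4(q)$ settled by the same torus and structure-constant computations) apply essentially verbatim — or else all members of $C$ lie in proper parabolics, where the parabolic induction, now possibly producing $\fA_5$-sections, must be checked not to reconstitute the simple group $G$. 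Either way the problem reduces to rank~$1$.

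The rank~$1$ groups $\PSL_2(q)$, $\PSU_3(q)$, $\tw2B_2(q^2)$ and $\tw2G_2(q^2)$ with $5\mid|G|$ carry the whole difficulty and the unique surviving example. When $5\mid q-1$ every element of $C$ is conjugate into a Borel subgroup $B$ with $O_5(B)=1$; as $B$ is solvable, minimality applied to the normal, commutator-closed subset $C\cap B$ forces $\langle C\cap B\rangle\le O_5(B)=1$, a contradiction. The non-split cases are the main obstacle: since $C$ need not be closed under squaring, it may consist of a single class of semisimple $5$-elements whose commutators remain inside $C$, which is exactly what happens for $\PSL_2(4)=\fA_5$. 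To exclude all larger $q$ I would repeat the trace-and-counting argument of Lemma~\ref{lem:rank1}, realizing $\langle C\rangle$ in $\SL_2$ (respectively on the natural module of the Suzuki or Ree group), writing $\tr[x,x^g]$ for $g$ in a maximal torus as a Laurent polynomial in the torus parameter, and showing it takes a value outside the finite set of traces of $5$-elements unless the polynomial degenerates, which happens only for $q$ so small that $G=\PSL_2(4)$. Isolating this genuine $\fA_5$ — where previously the closure of $C$ under squaring supplied an a priori trace restriction that is no longer available — is the crux of the proof.
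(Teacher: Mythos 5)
You correctly identify the two obstacles here (the dependence of Lemma~\ref{lem:4} on Wielandt's theorem via Lemma~\ref{lem:unique}, and the weaker inductive hypothesis of Theorem~\ref{thm:main2}), but your resolution of the second one has a genuine gap, and it sits exactly where the paper's proof has its one essential new idea. The paper does not split into cases at all: it shows outright that \emph{no} nontrivial element of $C$ can lie in a proper parabolic subgroup $M$, by a direct argument using no induction. If $y\in C^{\#}$ lies in $M$, then since $F^*(M)=O_r(M)$ and $y$ is a $5$-element with $r\ne5$, $y$ normalizes but does not centralize $O_r(M)$; coprime action then yields $x\in O_r(M)$ with $1\ne[y,y^x]=[y,[y,x]]\in O_r(M)$, a nontrivial $r$-element which by hypothesis lies in $C$ --- a contradiction, since $C$ consists of $5$-elements. (This is the same trick already used in the proof of Lemma~\ref{lem:simple5} to handle a minimal normal subgroup of order prime to~$5$.) Hence every element of $C^{\#}$ is regular semisimple, and only after this is in hand do the arguments of Lemmas~\ref{lem:classical} and~\ref{lem:exceptional} transfer.

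In your proposal the branch ``all members of $C$ lie in proper parabolics'' is left at ``must be checked not to reconstitute the simple group $G$,'' which is not an argument; and the other branch is also incomplete, because knowing that $C$ contains \emph{one} regular semisimple element does not suffice. The element of $C$ produced by the Lemma~\ref{lem:4}-type identity --- the one inverted by the chosen involution, with $\dim[c,V]\le4$ in the classical case, respectively inverted by $z$ in the exceptional case --- is a different element of $C$, and unless you know that \emph{it} is regular semisimple (equivalently, not in a parabolic), its landing in a parabolic produces no contradiction: as you yourself observe, the weakened induction only returns $\fA_5$-sections there. So both branches of your dichotomy stall without the coprime-action step. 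Your treatment of the rank-one groups is fine and consistent with the paper's ``argue precisely as in the case $p\ne5$'': solvability of the Borel subgroup kills the case $5\mid q-1$, the trace and counting arguments (which nowhere use closure under squaring) handle the nonsplit torus case, and $\PSL_2(4)\cong\PSL_2(5)\cong\fA_5$ is already disposed of by the alternating-group lemma. The missing coprime-action argument is the one genuine defect, but it is the load-bearing step of the whole reduction.
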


\begin{proof}
If some element of $C^{\#}$ centralizes a nontrivial unipotent
element, then $C^{\#}$ intersects a maximal parabolic subgroup $M$ of $G$.
Since $F^*(M)=O_r(M)$, it follows that some $1 \ne y \in C$ normalizes
and does not centralize $O_r(M)$, whence $1 \ne [y,y^x]$ is an $r$-element
for some $x \in O_r(M)$.

Thus, every element of $C^{\#}$ is a regular semisimple element.
If $G$ is classical, it is straightforward to see that we can choose an
involution $y$ which has fixed space of codimension at most $2$ (on the
natural module) such that $[x,x^y] \ne 1$ for some $x \in C$ (just choose $y$
not in the normalizer of the torus that is the centralizer of $x \in C$).
We argue as in the previous section to see that the fixed space
of some nontrivial element of $C$ is large, whence the rank is quite small.
The analysis of the small rank cases gives the only example.

If $G$ is exceptional, the proof is essentially as in the general case as well.
Namely, if $G$ has rank at least $4$, then choose an involution $z$
that does not invert any regular semisimple element. However, any involution
in $G$ does invert a nontrivial element of $C$.

For the rank one and two groups, we argue precisely as in the case of $p\ne 5$.
\end{proof}

\begin{lem}
 $G$ is not a sporadic group.
\end{lem}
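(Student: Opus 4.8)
The plan is to argue exactly along the lines of Lemma~\ref{lem:spor}, but with two engines adapted to the weaker hypotheses available for $p=5$. Since a sporadic simple group is not a direct product of copies of $\fA_5$, it suffices to rule $G$ out altogether, i.e.\ to contradict the standing hypotheses that $G=\langle C\rangle$ is simple sporadic, that $C$ is a union of $5$-classes closed under commutators and inverses, and that every element of $C$ is a commutator of a pair in $C$. I would use two facts. (i) \emph{Coprime commutator obstruction:} no $y\in C$ can normalize a nontrivial $5'$-subgroup $N$ without centralizing it, since otherwise coprime action yields $w\in N$ with $1\ne[y,y^w]\in N$ a nontrivial $5'$-element of $C$, contradicting that $C$ consists of $5$-elements (the device already used in Lemma~\ref{lem:simple5} and in the characteristic-$r$ cases). (ii) \emph{Inversion:} the computation in Lemma~\ref{lem:4}(b) is purely formal, so whenever there is $x\in C$ and an involution $t$ with $[x,x^{-t}]\ne1$, the element $x[x,x^{-t}]x^{-1}\in C$ is inverted by $t$. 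What the loss of Wielandt costs us is only the automatic \emph{existence} of such an $x$ for a given $t$, which I would establish directly in each case below.

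First I would dispose of the groups with noncyclic Sylow $5$-subgroup, where the Wielandt-based reduction to $|P|=5$ used in Lemma~\ref{lem:spor} (via \cite{As2}) is unavailable. Instead I would work $5$-locally: for $y\in C$ of order $5$, I would locate a $5$-local subgroup $L=N_G(Q)$, with $Q$ a suitable $5$-subgroup normalized by $y$, having $O_{5'}(L)\ne1$ and on which $y$ acts nontrivially; engine (i) then finishes. The known $5$-local structure of the sporadic groups, as recorded in the \GAP{} character table library \cite{GAP} and in \cite{GL}, makes this routine in most cases, the point being that a $5$-element of a sporadic group almost always meets a nonabelian $5$-local in which it moves a $5'$-chief factor.

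For the groups with cyclic Sylow $5$-subgroup I would follow Lemma~\ref{lem:spor} closely. If $G$ has two classes of involutions I would produce two non-conjugate involutions $s,t$ inverting one common $y\in C$: the involutions inverting $y$ are those lying in a single coset of $C_G(y)$ in $N_G(\langle y\rangle)$, and one checks in the relevant groups that this coset meets two classes. Then $st\in C_G(y)$, the dihedral group $\langle s,t\rangle$ has order divisible by $4$, and its central involution $z$ centralizes $y$, so $y\in C_G(z)$; inspecting involution centralizers via \cite{GL} shows that such a $5$-element normalizes but does not centralize a nontrivial $5'$-subgroup of $C_G(z)$, whence engine (i) applies. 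The finitely many configurations not covered this way, together with the residual groups having a single class of involutions, I would settle by explicit computation of class-multiplication constants in the tables of \cite{GAP}: for each class $C_0$ of $5$-elements one exhibits a product $C_0C_0$ (or $C_0C_0^{-1}$) meeting a class of elements of order prime to $5$, forcing a commutator out of the $5$-elements. The very large groups ($HN,Ly,B,M$, with Sylow $5$-subgroups of order up to $5^9$) are handled by the same structure-constant computation, or, as for $J_1$ and $Ly$ in Lemma~\ref{lem:spor}, by a direct calculation in a small faithful representation.

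The step I expect to be the main obstacle is precisely the class of groups with noncyclic Sylow $5$-subgroup in which a chosen $5$-element happens to centralize every $5'$-local it normalizes and in which there is a single class of involutions, so that neither engine (i) nor the two-involution device applies off the shelf. There one must use the full force of the hypotheses --- that $C$ generates $G$ and is closed under commutators --- together with the character-table structure constants, and must organize the bookkeeping so that the largest sporadics remain computationally tractable.
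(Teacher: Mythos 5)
Your two engines are individually sound---the coprime-commutator device is the one from Lemma~\ref{lem:simple5}, and the inversion computation of Lemma~\ref{lem:4}(b) is indeed purely formal---but the proposal does not close, and the failure is concrete. In your main branch (noncyclic Sylow $5$-subgroup), engine (i) needs an honest nontrivial $5'$-subgroup $N$ that some $y\in C$ normalizes but does not centralize; a $5'$-\emph{chief factor} moved by $y$ is not enough, because the coprime argument producing $1\ne[y,y^w]\in N$ takes place inside an actual subgroup of order prime to $5$. But for a $5$-local $L=N_G(Q)$ one always has $O_{5'}(L)\le C_G(Q)$, and in precisely the groups where this branch matters most ($Co_1$, $HN$, $Ly$, $Th$, $B$, $M$, \dots) the relevant $5$-locals are $5$-constrained, i.e.\ $F^*(L)=O_5(L)$, so $O_{5'}(L)=1$ and your engine has nothing to act on. (The phenomenon is not incidental: it is exactly the mechanism behind the $\fA_5$ exception in Theorem~\ref{thm:main2} that a $5$-element may normalize no nontrivial $5'$-subgroup at all.) You acknowledge the resulting hole in your last paragraph, but deferring the residual configurations to ``the full force of the hypotheses'' and unspecified bookkeeping is a gap, not a proof, and it would force structure-constant computations in the very largest sporadic groups.

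What is missing is the idea the paper actually uses, which is available even though Wielandt is not: apply the inductive statement of Theorem~\ref{thm:main2} itself to proper subgroups. If $H<G$ contains a nontrivial $z\in C$, then $C\cap H$ is a normal, commutator-closed set of $5$-elements of $H$, so by minimality $\langle C\cap H\rangle O_5(H)/O_5(H)$ is a normal subgroup of $H/O_5(H)$ that is a direct product of copies of $\fA_5$. Hence it suffices to produce, for some nontrivial $z\in C$, a proper overgroup $H$ with $O_5(H)=1$ and with no nontrivial normal subgroup isomorphic to a direct product of copies of $\fA_5$: then $\langle C\cap H\rangle=1$, contradicting $z\in C\cap H$. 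The paper's proof does exactly this, after first observing by inspection that $C$ must contain a class of nontrivial $5$-central elements (for which suitable overgroups can be exhibited group by group). This single device replaces Wielandt, both of your engines, and the case division by Sylow structure and involution classes---in particular it disposes uniformly of the configurations you flag as the main obstacle, which is where your plan, like the direct translation of Lemma~\ref{lem:spor}, breaks down.
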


\begin{proof}
By inspection of subgroups, we see that $C$ must contain
a class of nontrivial $5$-central elements (this class is often unique).
One can produce an overgroup of such an element where the result holds
by induction (with $O_5$ trivial and not containing a normal
product of $\fA_5$ subgroups).
\end{proof}


\end{document}